\newcommand{\McC}{\raise.5ex\hbox{c}}
\newtheorem{theorem}{Theorem}[section]
\newtheorem*{theorem*}{Theorem}
\newtheorem{lemma}[theorem]{Lemma}
\newtheorem{definition}[theorem]{Definition}
\newtheorem{corollary}[theorem]{Corollary}
\newtheorem*{corollary*}{Corollary}
\newtheorem{proposition}[theorem]{Proposition}
\theoremstyle{remark}
\newtheorem{remark}[theorem]{Remark}
\newtheorem{example}[theorem]{Example}
\author[Bickel]{Kelly Bickel$^\dagger$}
\address{Department of Mathematics, Bucknell University, Lewisburg, PA 17837, USA.}
\email{kelly.bickel@bucknell.edu}
\thanks{$\dagger$ Research supported in part by National Science Foundation
DMS grant \#2000088.}
\author[Cima]{Joseph A. Cima}
\address{Department of Mathematics, University of North Carolina, Chapel Hill, NC 27599 USA.}
\email{cima@email.unc.edu}
\author[Sola]{Alan A. Sola}
\address{Department of Mathematics, Stockholm University, 106 91 Stockholm, Sweden.}
\email{sola@math.su.se}
\keywords{Clark measure, rational inner function, unitary embedding}
  \subjclass[2020]{Primary 28A25, 28A35; Secondary 32A08, 47A55}
\begin{document}
\title[Clark measures for RIFs]{Clark measures for rational inner functions}
\date{\today}

\maketitle
\begin{abstract}
We analyze the fine structure of Clark measures and Clark isometries associated with two-variable rational inner functions on the bidisk. In the degree $(n,1)$ case, we give a complete description of  supports and weights for both generic and exceptional Clark measures, characterize when the associated embedding operators are unitary, and give a formula for those embedding operators. We also highlight connections between our results and both the structure of Agler decompositions and study of extreme points for the set of positive pluriharmonic measures on 2-torus. 
\end{abstract}

\section{Introduction}

A bounded analytic function $\phi\colon \mathbb{D}^d\to \mathbb{C}$ is said to be {\it inner} if $|\phi(\zeta)|=1$ for almost every $\zeta \in \mathbb{T}^d$, where $\mathbb{D}$ is the unit disk and $\mathbb{T}$ is the unit circle. In the one-variable case, each inner function $\psi$ defines a class of positive Borel measures $\{ \sigma_\alpha\}_{\alpha \in \mathbb{T}} $ on $\mathbb{T}$ that satisfy
\[ \frac{1-|\psi(z)|^2}{|\alpha-\psi(z)|^2} = \int_\mathbb{T} \frac{ 1-|z|^2}{|\zeta - z|^2} d\sigma_\alpha(\zeta), \quad \text{ for } z \in \mathbb{D}.\]
These measures have a number of important applications and properties; among other results, they are the spectral representing measures for rank $1$ unitary perturbations of certain compressed shift operators and via Alexandrov's theorem, they disintegrate Lebsegue measure, see  \cite{CMR,GMR} for comprehensive introductions to this classical theory. Generalizations of these measures to the polydisk $\mathbb{D}^d$ were recently studied by E. Doubtsov in \cite{D19}; other multivariate generalizations of Clark theory can be found in \cite{AD20, J14}. In this paper, we obtain precise information about both the two-variable Clark measures on the bidisk defined in  \cite{D19} and associated isometries, in the setting of two-variable rational inner functions. 

\subsection{Notation and Setup} 
To define Clark measures on the bidisk, we need some notation. Denote the Poisson kernel on $\mathbb{D}^2$ by 
\[ P_z(\zeta)= P(z, \zeta) := \frac{ (1-|z_1|^2)(1-|z_2|^2)} {|\zeta_1-z_1|^2 |\zeta_2 -z_2|^2}, \quad \text{ for } z\in \mathbb{D}^2, \zeta \in \mathbb{T}^2 ,\] 
and the Cauchy kernel for the bidisk by
\[C_w(z) = C(z, w)=\frac{1}{(1-z_1\overline{w}_1)(1-z_2\overline{w}_2)},\quad \text{ for } z\in \mathbb{D}^2, w \in \overline{\mathbb{D}}^2.\]
Recall that $C$ acts as the reproducing kernel for the Hardy space $H^2(\mathbb{D}^2)$, which consists of all analytic functions $f\colon \mathbb{D}^2\to \mathbb{C}$ satisfying the norm boundedness condition
\[\|f\|^2_{H^2}:=\sup_{0<r<1}\int_{\mathbb{T}^2}|f(r\zeta)|^2dm_2(\zeta)<\infty.\]
Here, $m_2$ denotes normalized Lebesgue measure on $\mathbb{T}^2$ and later, we will use $m$ to denote normalized Lebesgue measure on $\mathbb{T}$. For $f \in H^2(\mathbb{D}^2)$ and $\zeta \in \mathbb{T}^2$, we let $f^*(\zeta)$ denote the non-tangential value of $f$ at $\zeta$. Recall that $f^*(\zeta)$ exists for a.e. $\zeta \in \mathbb{T}^2$, see Chapter XVII, Theorem 4.8 in \cite{Zygmund}.  Also, throughout this paper, we will slightly abuse notation by using $z, \zeta$ to refer to points from both $\mathbb{C}$ and $\mathbb{C}^2$, but the meaning should be clear from the context.

Let $\phi$ be a non-constant inner function on $\mathbb{D}^2$ and let $\alpha \in \mathbb{T}$. Since $z\mapsto \Re[(\alpha+\phi(z))/(\alpha-\phi(z))]$ is a positive pluriharmonic function on the bidisk, there exists a unique positive Borel measure $\sigma_\alpha$ on $\mathbb{T}^2$ called a \emph{Clark measure} such that 
\[ \Re \left( \frac{\alpha + \phi(z) }{\alpha - \phi(z)} \right) = \frac{1-|\phi(z)|^2}{|\alpha-\phi(z)|^2}= \int_{\mathbb{T}^2} P_z(\zeta) d \sigma_{\alpha}(\zeta),\]
for all $z \in \mathbb{D}^2.$ Observe that 
\[ \int_{\mathbb{T}^2} d \sigma_{\alpha}(\zeta) =\int_{\mathbb{T}^2} P(0,\zeta) d \sigma_{\alpha}(\zeta) = \Re \left( \frac{\alpha + \phi(0) }{\alpha - \phi(0)} \right) = \frac{ 1- |\phi(0)|^2}{|\alpha - \phi(0)|^2} <\infty, \]  
so $\sigma_\alpha$ is a finite measure. Since $\sigma_\alpha$ is a finite Borel measure on $\mathbb{T}^2$, it is actually a Radon measure and basic measure theory (see for example \cite[Proposition 7.9]{Folland}) implies that $C(\mathbb{T}^2)$ is dense in $L^2(\sigma_{\alpha}).$  Furthermore, as linear combinations of the Poisson kernels $\{P_z\}_ {z\in\mathbb{D}^2}$ are dense in $C(\mathbb{T}^2)$, they are also dense in $L^2(\sigma_\alpha)$. Finally, as asserted in \cite{D19}, the support of each $\sigma_\alpha$ should be contained in the closure of the set $\{ \zeta \in \mathbb{T}^2: \phi^*(\zeta) =\alpha\}.$ This support condition can also be verified directly in the case when $\phi$ is a rational inner function. 

There are close connections between the Clark measures $\sigma_\alpha$ and the {\it model space} associated with the function $\phi$ defined by
 \[K_{\phi}:= H^2(\mathbb{D}^2) \ominus \phi H^2(\mathbb{D}^2).\]  
Then the reproducing kernel for $K_\phi$ is given by 
\[k(z,w) = k_{w}(z):=(1-\overline{\phi(w)}\phi(z))C_w(z), \text{ for } z, w \in \mathbb{D}^2.\]
 In \cite{D19}, Doubtsov defined an embedding map $J_{\alpha}\colon K_{\phi} \rightarrow L^2(\sigma_{\alpha})$ by first specifying it on reproducing kernels as
\[J_{\alpha}[k_w](\zeta):=(1-\alpha\overline{\phi(w)})C_w(\zeta), \quad \text{for }w \in \mathbb{D}^2, \zeta \in \mathbb{T}^2,\]
then showing this definition preserves inner products on  linear combinations of reproducing kernels, and finally extending it to all of $K_{\phi}$ using density.

In one variable, it is a classical fact (see \cite[Chapter 9]{CMR} or \cite[Chapter 11]{GMR}) that the analogous embedding is in fact a unitary for each inner function and each $\alpha \in \mathbb{T}$. In the higher-dimensional setting, Doubtsov \cite[Theorem 3.2]{D19} shows that $J_{\alpha}$ is a unitary operator if and only if the bidisk algebra $A(\mathbb{D}^2)$ is dense in $L^2(\sigma_{\alpha}).$ 
He also gives examples showing that the two-variable embeddings $J_{\alpha}$ can {\it fail} to be unitary. In this paper, we investigate this phenomenon as part of our detailed study of two-variable Clark measures associated with rational inner functions, i.e.~with functions that are both rational and inner on $\mathbb{D}^2$. It is worth noting that we restrict to this two-variable situtation (rather than a more general $d$-variable setting) because many of our key tools, which include Agler decompositions, certain model space properties, and well-understood unimodular level set behaviors, do not extend to even the three-variable setting, see \cite{BPS19, Kne11}.

To describe the structure of two-variable rational inner functions, or RIFs, we require some notation. For a polynomial $p \in \mathbb{C}[z_1, z_2]$, let $\deg_ip$ denote the degree of $p$ in $z_i$ and set $\deg p = (\deg_1p, \deg_2p).$ Then if a pair of nonnegative integers $m$ satifies $m =(m_1, m_2) \ge \deg p$, we can define the $m$-reflection  of $p$ as
\[ \tilde{p}(z) = z_1^{m_1} z_2^{m_2} \overline{ p\left( 1/\bar{z}_1, 1/\bar{z}_2 \right)}.\]
Rudin and Stout \cite{Rud69, RudSt65} showed that each RIF $\phi$ is of the form
\[ \phi(z) = \gamma \frac{\tilde{p}(z)}{p(z)} ,\]
where $\gamma \in \mathbb{T}$, $p$ has no zeros on $\mathbb{D}^2$, $\tilde{p}$ is some $m$-reflection of $p$, and $\tilde{p}, p$ share no common factors. We define $\deg \phi = \deg \tilde{p} \ge \deg p.$ To simplify our notation, we will assume $\gamma=1$ for the duration of the paper. It is worth noting that, in the one-variable setting, each RIF is a finite Blaschke product and extends analytically to some disk containing $\overline{\mathbb{D}}$ in its interior. 

Unlike this one-variable situation, two-variable rational inner functions can possess boundary singularities. For example,
\begin{equation} \label{eqn:ex} \phi(z) = \frac{2z_1z_2-z_1-z_2}{2-z_1-z_2}\end{equation}
has a singularity at $(1,1) \in \mathbb{T}^2$. More generally, if $\phi = \tilde{p}/p$ and $p(\tau)=0$ for some $\tau \in \mathbb{T}^2$, then $\tau$ is a  {\it singularity} of $\phi$ in the sense that $\phi$ cannot be extended continuously to a neighborhood of $\tau$; see \cite[Corollary 1.7]{Pas17}. Moreover, if $p(\tau)=0$, then $\tilde{p}(\tau)=0$ and so B\'ezout's theorem gives a bound on the number of zeros $p$ (or equivalently, the number of singularities $\phi$) can have on $\mathbb{T}^2$. Specifically, if $\deg \tilde{p} =(m_1,m_2)$ and $\deg p = (n_1,n_2)$, B\'ezout's theorem implies that 
 $p$ and $\tilde{p}$ have exactly $n_1m_2+n_2m_1$ common zeros in $\mathbb{C}_{\infty} \times \mathbb{C}_{\infty}$ counted according to intersection multiplicity, where $\mathbb{C}_\infty$ denotes the Riemann sphere; see p. 1287 in \cite{Kne15}. If all such common zeros of $\tilde{p}$ and $p$ occur on $\mathbb{T}^2$, we say $p$ is $\mathbb{T}^2$\emph{-saturated}. As the intersection multiplicity of such common zeros on $\mathbb{T}^2$ must be even, $\phi$ can have at most $m_1m_2$ distinct singularities on $\mathbb{T}^2$, see \cite{Kne15}. 

Still, these RIF singularities are somewhat mild. Indeed, if $\phi$ is a RIF, then \cite[Corollary 14.6]{Kne15} states that for each $\zeta \in \mathbb{T}^2$, including any points $\zeta$ where $p(\zeta)=0$, the non-tangential value $\phi^*(\zeta)$ exists and is unimodular. For more information about the zero set of $p$, denoted $\mathcal{Z}_p$, see \cite{AMS06, Kne15}.

\subsection{Overview of Results}

The body of this paper begins with Section \ref{sec:RIF}, which provides some information about the Clark measures $
\sigma_\alpha$ associated to a general RIF $\phi$. Specifically, Theorem \ref{thm:RIF1} gives a simple proof that $\sigma_\alpha$ cannot possess any point-masses (a fact noted earlier in \cite{MacD90}), and the section also gives further information about the closed set
\begin{equation} \label{eqn:Ca1} \mathcal{C}_{\alpha}:=\{\zeta \in \mathbb{T}^2\colon \tilde{p}(\zeta)=\alpha p(\zeta)\},\end{equation}
which contains the support of $\sigma_\alpha$. 

From Section \ref{sec:n1} onward,  we study RIFs $\phi =\frac{\tilde{p}}{p}$ with $\deg \phi =(n,1).$ In a sense, these are the simplest two-variable RIFs, but the constructions from \cite{BPS17, BPSprep} and the examples in our Section \ref{sec:examples} show that they can still be quite complicated. First, note that for these RIFs, 
 \begin{equation} \label{eqn:p1p2} p(z)= p_1(z_1) + z_2 p_2(z_1)\end{equation} is a  polynomial of degree at most $(n,1)$ that does not vanish on $\mathbb{D}^2$, 
\[ \tilde{p}(z) := z_2 \tilde{p}_1(z_1) + \tilde{p}_2(z_1), \text{ where each } \tilde{p}_i(z_1) = z_1^n\overline{ p_i(1/\bar{z_1})},\]
the polynomials $p$, $\tilde{p}$ share no common factors, and $p$ has at most $n$ distinct zeros on $\mathbb{T}^2$. In Subsection \ref{sec:model}, we recall some important properties about the model spaces  and formulas associated to such RIFs. For example, such RIFs possess a specific \emph{Agler decomposition} or \emph{sums of squares formula} of the form
\begin{equation} \label{eqn:AD2}   p(z) \overline{p(w)} - \tilde{p}(z) \overline{\tilde{p}(w)} = (1-z_1 \bar{w}_1) \sum_{j=1}^n R_j(z) \overline{R_j(w)}+ (1-z_2\bar{w}_2) Q(z) \overline{Q(w)}  \end{equation} 
where $R_1, \dots, R_n, Q \in \mathbb{C}[z_1, z_2]$, $\deg R_j \le (n-1,1),$ and $\deg Q \le (n,0).$

In Subsection \ref{sec:support}, we study some preliminary objects, which are key in analyzing both the Clark measures $\sigma_\alpha$ and the isometric operators $J_\alpha$ associated to $\phi$. Those objects are detailed in the following definition:

 \begin{definition} \label{def:1} Fix $\phi = \tilde{p}/p$ with $\deg \phi= (n,1)$ and $\alpha \in \mathbb{T}.$ Define the following:\\
 
\begin{itemize}
\item The points $(\tau_1, \lambda_1), \dots, (\tau_m, \lambda_m)$ are the zeros of $p$ on $\mathbb{T}^2$. Here, $0 \le m \le n$. \\

\item $B_{\alpha}$ is the rational function \[B_{\alpha}(z) := \frac{ \tilde{p}_1(z)-\alpha p_2(z)}{\alpha p_1(z) -\tilde{p}_2(z)},\] where any common factors of the numerator and denominator have been cancelled.  \\

\item $E_{\alpha}$ and $L_k$ are the sets in $\mathbb{T}^2$ defined by $E_\alpha : = \{ (\zeta, \overline{B_\alpha(\zeta)}): \zeta \in \mathbb{T} \}$ and $L_k = \{\tau_k\} \times \mathbb{T}$ for $k=1, \dots, m$. \\

\item $W_\alpha$ is the function on $\mathbb{T}$ defined by \[W_{\alpha}(\zeta):=  \frac{ |p_1(\zeta)|^2 - |p_2(\zeta)|^2}{|\tilde{p}_1(\zeta)- \alpha p_2(\zeta)|^2}.\]
\end{itemize}

Lastly, we say $\alpha \in \mathbb{T}$ is an \emph{exceptional value} for $\phi$ if there is a $k$ such that
 $\phi^*(\tau_k, \lambda_k) = \alpha$ and $\alpha \in \mathbb{T}$ is a \text{generic value} for $\phi$ otherwise. 
\end{definition}

Subsection \ref{sec:clark1} contains our first main result, the following complete characterization of the Clark measures $\sigma_\alpha$ associated to a given degree $(n, 1)$ RIF $\phi$:

 \begin{theorem} \label{thm:clark} For $\alpha \in \mathbb{T}$, the Clark measure $\sigma_\alpha$ satisfies
 \[ \int_{\mathbb{T}^2} f(\zeta) \ d\sigma_{\alpha}(\zeta) = \int_{\mathbb{T}} f (\zeta, \overline{B_\alpha(\zeta)})  \ d \nu_\alpha(\zeta) + \sum_{k=1}^m c_k^\alpha \int_{\mathbb{T}} f(\tau_k, \zeta) \ dm(\zeta)\]
 for all $f \in L^1(\sigma_{\alpha})$,  where  $d\nu_\alpha =W_{\alpha} dm$, the functions $B_{\alpha}, W_\alpha$ are from Definition \ref{def:1},
  and the constants $c_k^\alpha$ are nonzero (and positive) if and only if $\phi^*(\tau_k, \lambda_k) = \alpha$. 
 \end{theorem}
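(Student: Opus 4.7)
The plan is to verify that the candidate measure in the theorem satisfies the defining Poisson integral identity
\[
 \int_{\mathbb{T}^2} P_z(\zeta)\,d\sigma_\alpha(\zeta) = \frac{1-|\phi(z)|^2}{|\alpha-\phi(z)|^2}, \qquad z\in\mathbb{D}^2,
\]
since this condition characterizes the Clark measure uniquely. The first step is a direct simplification of $P_z(\zeta,\overline{B_\alpha(\zeta)})W_\alpha(\zeta)$ using two identities valid on $\mathbb{T}$. The ``star identity'' $|\alpha p_1(\zeta)-\tilde{p}_2(\zeta)|=|\tilde{p}_1(\zeta)-\alpha p_2(\zeta)|$ follows from the reflection relation $(\alpha p_1-\tilde{p}_2)^{\sim}=-\bar\alpha(\alpha p_2-\tilde{p}_1)$; it immediately gives $|B_\alpha|=1$ on $\mathbb{T}$ and the algebraic identity $|1-z_2B_\alpha(\zeta)|^2=|\alpha p(\zeta,z_2)-\tilde{p}(\zeta,z_2)|^2/|\alpha p_1(\zeta)-\tilde{p}_2(\zeta)|^2$. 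The ``slice identity''
\[
 |p(\zeta,z_2)|^2-|\tilde{p}(\zeta,z_2)|^2 = (|p_1(\zeta)|^2-|p_2(\zeta)|^2)(1-|z_2|^2), \quad \zeta\in\mathbb{T},\ z_2\in\mathbb{D},
\]
comes from expanding the squared moduli and using $\tilde{p}_i(\zeta)\overline{\tilde{p}_j(\zeta)}=p_j(\zeta)\overline{p_i(\zeta)}$ on $\mathbb{T}$. After substitution and cancellation, the integrand collapses to
\[
 P_z(\zeta,\overline{B_\alpha(\zeta)})\,W_\alpha(\zeta) = \frac{1-|z_1|^2}{|\zeta-z_1|^2}\cdot\frac{1-|\phi(\zeta,z_2)|^2}{|\alpha-\phi(\zeta,z_2)|^2},
\]
and together with $\int_\mathbb{T} P_z(\tau_k,\zeta)\,dm(\zeta)=(1-|z_1|^2)/|\tau_k-z_1|^2$, the candidate Poisson transform takes the form of a $1$-dimensional Herglotz-type expression in $z_1$ (with $z_2$ frozen), involving an absolutely continuous density $g_{z_2}(\zeta):=(1-|\phi(\zeta,z_2)|^2)/|\alpha-\phi(\zeta,z_2)|^2$ plus candidate atoms at the $\tau_k$.

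Next, for each fixed $z_2\in\mathbb{D}$, I would invoke uniqueness of the Herglotz representation for $G_{z_2}(w_1):=(1-|\phi(w_1,z_2)|^2)/|\alpha-\phi(w_1,z_2)|^2$, which is positive harmonic on $\mathbb{D}$. Since the zeros of $p$ on $\mathbb{T}^2$ lie on $\{|p_1|=|p_2|\}$ and $z_2\in\mathbb{D}$, the denominator $p(\cdot,z_2)$ is nonvanishing on $\overline{\mathbb{D}}$, so $\phi(\cdot,z_2)$ is rational and extends continuously across $\mathbb{T}$; hence the absolutely continuous part of the Herglotz measure of $G_{z_2}$ is exactly $g_{z_2}\,dm$. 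The factorizations $p(\tau_k,z_2)=p_2(\tau_k)(z_2-\lambda_k)$ and $\tilde{p}(\tau_k,z_2)=\tilde{p}_1(\tau_k)(z_2-\lambda_k)$ force $\phi(\tau_k,z_2)=\tilde{p}_1(\tau_k)/p_2(\tau_k)=:\alpha_k\in\mathbb{T}$, a value independent of $z_2$ and coinciding with $\phi^*(\tau_k,\lambda_k)$. Consequently $G_{z_2}$ extends continuously to $\tau_k$ with value $0$ when $\alpha_k\neq\alpha$ (no atom), while $G_{z_2}$ blows up at $\tau_k$ when $\alpha_k=\alpha$ (producing a Herglotz atom $c_k(z_2)>0$). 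Matching with the candidate representation then reduces the theorem to showing that $c_k(z_2)$ is independent of $z_2$.

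I expect this constancy claim to be the main obstacle. I would compute the atomic mass via the classical inversion $c_k(z_2)=\frac{1}{2}\lim_{r\to1^-}(1-r)\,G_{z_2}(r\tau_k)$, for which only the leading-order behavior of $\phi(w_1,z_2)-\alpha_k$ near $w_1=\tau_k$ is required. A L'H\^opital expansion using the slice factorizations above yields
\[
 \phi(w_1,z_2)-\alpha_k = (w_1-\tau_k)\cdot\frac{N(z_2)}{p_2(\tau_k)^2\,(z_2-\lambda_k)} + O((w_1-\tau_k)^2),
\]
with $N(z_2):=p_2(\tau_k)\partial_1\tilde{p}(\tau_k,z_2)-\tilde{p}_1(\tau_k)\partial_1 p(\tau_k,z_2)$ linear in $z_2$. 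The apparent singularity at $z_2=\lambda_k$ is spurious precisely because $N(\lambda_k)=0$, which unpacks to $\partial_1\tilde{p}(\tau_k,\lambda_k)=\alpha_k\,\partial_1 p(\tau_k,\lambda_k)$ -- that is, to the parallelism of $\nabla p$ and $\nabla\tilde{p}$ at $(\tau_k,\lambda_k)$. Since the $(n,1)$ structure ensures $\partial_2 p(\tau_k,\lambda_k)=p_2(\tau_k)\neq0$ and $\partial_2\tilde{p}(\tau_k,\lambda_k)=\tilde{p}_1(\tau_k)\neq0$, both polynomials have simple zeros there, so gradient parallelism is equivalent to intersection multiplicity at least $2$ -- which is forced by Knese's parity result \cite{Kne15}. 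After cancelling $(z_2-\lambda_k)$ from $N(z_2)$, the leading coefficient $C$ of $\phi-\alpha_k$ is a nonzero constant in $z_2$, so a brief residue calculation gives $c_k(z_2)=\Re(\alpha_k/(\tau_k C))$ independent of $z_2$; positivity is automatic since $\sigma_\alpha$ is a positive measure with a blow-up at each such $\tau_k$. Matching atomic masses then finishes the verification.
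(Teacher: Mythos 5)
Your proof takes a genuinely different route from the paper's. You propose to verify the candidate measure directly against the defining Poisson integral by slicing in $z_2$ and invoking uniqueness of the one-variable Herglotz representation for $G_{z_2}(w_1)=(1-|\phi(w_1,z_2)|^2)/|\alpha-\phi(w_1,z_2)|^2$; this treats generic and exceptional $\alpha$ uniformly. The paper proves the generic case (Proposition~\ref{thm:RIF1n}) with essentially your slice computation $P_z(\zeta,\overline{B_\alpha(\zeta)})W_\alpha(\zeta)=P_{z_1}(\zeta)g_{z_2}(\zeta)$, but then handles the exceptional case (Proposition~\ref{prop:eclark}) in two separate pieces: the absolutely continuous part on $E_\alpha$ is recovered by a weak-$\star$ limit $\sigma_{\alpha_n}\to\sigma_\alpha$ through generic values with cutoff functions, and the atom masses on $L_k$ are extracted by multiplying the Poisson identity at $z_1=r\tau_k$ by $(1-r)$, letting $r\nearrow 1$, and applying Carath\'eodory's theorem. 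Your approach replaces the weak-$\star$ argument with Fatou's theorem for the boundary density of $G_{z_2}$, which is arguably cleaner, at the cost of needing to argue that the singular part of the Herglotz measure of the rational function $(\alpha+\phi(\cdot,z_2))/(\alpha-\phi(\cdot,z_2))$ is purely atomic and sits only at the boundary poles $\tau_k$. Your constancy argument for $c_k(z_2)$ --- $N(\lambda_k)=0$ via gradient parallelism of $p$ and $\tilde p$ at $(\tau_k,\lambda_k)$, deduced from even intersection multiplicity --- is a nice alternative; the paper reaches the same parallelism via $P_1=\lambda Q_1$ from \cite[Props.~14.3, 14.5]{Kne15} inside Lemma~\ref{lem:nt}, and then uses Lemma~\ref{lem:factor}(iii) and Carath\'eodory's theorem to produce $c^\alpha_k=1/|\partial_1\phi(\tau_k,z_2)|$. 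Your expression $\Re(\alpha_k/(\tau_k C))$ coincides with this once you note $\tau_k\bar\alpha C=|C|>0$ by Carath\'eodory--Julia.

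There is one real gap: you assert but never justify that the leading coefficient $C$ is nonzero, equivalently that $N\not\equiv 0$. Gradient parallelism gives $N(\lambda_k)=0$, hence constancy of $N(z_2)/(z_2-\lambda_k)$, but not nonvanishing of that constant. This is precisely Lemma~\ref{lem:factor}(i),(iii) in the paper: if $(z_1-\tau_k)^2$ divided $\tilde p-\alpha p$, then $\partial_1\phi(\tau_k,\zeta_2)=0$ for all $\zeta_2\in\mathbb{T}\setminus\{\lambda_k\}$, yet $\phi(\cdot,\lambda)$ is a nonconstant finite Blaschke product for each $\lambda\in\mathbb{T}\setminus\{\lambda_1,\dots,\lambda_m\}$, so its derivative cannot vanish on $\mathbb{T}$ --- a contradiction. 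Without this, the residue calculation is not justified and the atom masses could be zero. Relatedly, the remark that ``positivity is automatic since $\sigma_\alpha$ is a positive measure with a blow-up'' does not itself give strict positivity of $c_k^\alpha$; it is the nonvanishing of $C$ combined with Carath\'eodory--Julia that forces $c_k^\alpha>0$. These are fixable, but should be spelled out for the proof to be complete.
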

 
When they are non-zero, the constants $c_k^\alpha$ can be obtained from the formula in \eqref{eqn:formc}. We should note that although Clark measures can often be computed in the one-variable case, very is known in the two-variable setting. Indeed, this theorem 
can be viewed as a significant generalization of Example 4.3 in \cite{D19}, which described $\sigma_\alpha$ for the specific degree $(1,1)$ RIF given in \eqref{eqn:ex}, and of Example 3 in \cite{MacD82}, which includes the case where $\phi(z) = z_2 b(z_1)$, for $b$ a finite Blaschke product with $b(0) \in \mathbb{R}$.  Because of its length, the proof of Theorem \ref{thm:clark} is broken into two pieces, Propositions \ref{thm:RIF1n} and  \ref{prop:eclark}.
  
In Section \ref{sec:clark2}, we prove our other main result, a formula for  the isometry $J_\alpha: K_\phi \rightarrow L^2(\sigma_\alpha)$ and an exact characterization of when it is unitary:

\begin{theorem}\label{thm:isom}  Fix $\alpha \in \mathbb{T}$. 
\begin{itemize}
\item[i.] For each each $f \in K_\phi$, $(J_\alpha f ) (\zeta) = f^*(\zeta) \text{ for } \sigma_\alpha\text{-a.e.} \ \zeta\in \mathbb{T}^2.$ 
\item[ii.] $J_{\alpha}: K_{\phi} \rightarrow L^2(\sigma_{\alpha})$ is unitary if and only if $\alpha$ is a generic value for $\phi$.
 
\end{itemize}
 \end{theorem}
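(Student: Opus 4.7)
My plan is to prove the two parts separately: part (i) by verifying the identity on reproducing kernels and then extending, and part (ii) by invoking Doubtsov's criterion that $J_\alpha$ is unitary if and only if the bidisk algebra $A(\mathbb{D}^2)$ is dense in $L^2(\sigma_\alpha)$, which reduces the problem to a density question for polynomials on the support of $\sigma_\alpha$ described in Theorem \ref{thm:clark}.

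For part (i), I first compute that, for $f = k_w$,
\[
k_w^*(\zeta) - J_\alpha[k_w](\zeta) = (\phi^*(\zeta) - \alpha)\,\overline{\phi(w)}\,C_w(\zeta)
\]
wherever $\phi^*(\zeta)$ exists. The crux is then to verify that $\phi^*(\zeta) = \alpha$ for $\sigma_\alpha$-a.e.\ $\zeta$. By Theorem \ref{thm:clark}, the support of $\sigma_\alpha$ lies in $E_\alpha \cup \bigcup_k L_k$, with $L_k$ contributing only when $\phi^*(\tau_k,\lambda_k) = \alpha$. On $E_\alpha$ the identity $\tilde p(\zeta)=\alpha p(\zeta)$ holds by definition of $\mathcal{C}_\alpha$, and $p(\zeta)\ne 0$ away from the finite singular set, so $\phi^* = \alpha$ $\nu_\alpha$-a.e. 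On each contributing $L_k$, the linearity of $p$ and $\tilde p$ in $z_2$, combined with $p(\tau_k,\lambda_k) = \tilde p(\tau_k,\lambda_k) = 0$ and $\phi^*(\tau_k,\lambda_k) = \alpha$, forces the polynomial identity $\tilde p(\tau_k,z_2) = \alpha\, p(\tau_k,z_2)$ in $z_2$, so $\phi^*(\tau_k,\zeta_2) = \alpha$ for every $\zeta_2 \in \mathbb{T}\setminus\{\lambda_k\}$. Thus $J_\alpha k_w = k_w^*$ $\sigma_\alpha$-a.e., and the identity extends by linearity to finite linear combinations of reproducing kernels. For general $f \in K_\phi$ I approximate by such combinations, pass to a subsequence converging $\sigma_\alpha$-a.e., and identify the limit with $f^*$ by writing $f$ via the Agler decomposition \eqref{eqn:AD2} as a sum of terms of the form $R_j(z)g_j(z_2)/p(z) + Q(z)h(z_1)/p(z)$ whose boundary behavior on the support of $\sigma_\alpha$ can be tracked explicitly.

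For the ``exceptional $\Rightarrow$ not unitary'' direction of part (ii), if $\phi^*(\tau_k,\lambda_k)=\alpha$ for some $k$, I take the test function $g(\zeta_1,\zeta_2) = \overline{\zeta_2}\,\mathbf{1}_{\{\zeta_1 = \tau_k\}}$. Since $E_\alpha \cap L_k$ is at most a single point and hence $\nu_\alpha$-null, Theorem \ref{thm:clark} yields
\[
\int g\,\overline{q}\,d\sigma_\alpha = c_k^\alpha \int_\mathbb{T} \overline{\zeta_2}\,\overline{q(\tau_k,\zeta_2)}\,dm(\zeta_2) = 0
\]
for every polynomial $q$, because $\zeta_2\, q(\tau_k,\zeta_2)$ is a polynomial in $\zeta_2$ with no constant term. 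Since $c_k^\alpha>0$, $g$ is nonzero in $L^2(\sigma_\alpha)$, so polynomials fail to be dense and $J_\alpha$ is not unitary.

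The main obstacle is the converse: for generic $\alpha$, $\sigma_\alpha$ is supported on $E_\alpha$, and via the parametrization $\zeta\mapsto(\zeta,\overline{B_\alpha(\zeta)})$ the induced isometry $L^2(\sigma_\alpha) \cong L^2(\mathbb{T},W_\alpha\,dm)$ identifies polynomials in $(\zeta_1,\zeta_2)$ with the algebra $\mathbb{C}[\zeta,\overline{B_\alpha(\zeta)}]$. My plan is to prove density of this algebra in $L^2(\mathbb{T},W_\alpha\,dm)$ in two steps. First, I verify that for generic $\alpha$, $B_\alpha$ is a non-trivial finite Blaschke product: the denominator $\beta = \alpha p_1 - \tilde p_2$ cannot vanish at any $\tau_k$ by the genericity hypothesis, and zeros of $\beta$ inside $\mathbb{D}$ are ruled out by a Schur-type argument (the ``$\phi=\alpha$'' locus in $\mathbb{D}^2$ would correspond to $z_2=1/B_\alpha(z_1)$, forbidden by $|\phi|<1$ on $\mathbb{D}^2$). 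Second, with $W_\alpha$ controlled above and below, I show that the monomials $\{\zeta^m \overline{B_\alpha(\zeta)}^n\}_{m,n\ge 0}$ densely span $L^2(\mathbb{T},W_\alpha\,dm)$, combining the fact that $\{\zeta^m\}_{m\ge 0}$ spans $H^2$ with the observation that non-constancy of the Blaschke product $B_\alpha$ forces the products $\zeta^m\overline{B_\alpha(\zeta)}^n$ to cover arbitrarily negative Fourier modes as $m,n$ grow. Making this density argument precise, and carefully treating any behavior of $W_\alpha$ at points where $\tilde p_1 - \alpha p_2$ vanishes, is the technically heaviest part of the proof.
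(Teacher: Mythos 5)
Your overall structure follows the paper — Doubtsov's criterion for part (ii), reproducing kernels plus density for part (i), and a test function on an $L_k$ line for the ``exceptional $\Rightarrow$ not unitary'' direction — but the two hardest steps are where your plan is incomplete.

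For part (i), verifying the identity $J_\alpha k_w = k_w^*$ $\sigma_\alpha$-a.e.\ on reproducing kernels is correct and essentially the same computation the paper makes. The gap is in the extension. You propose to approximate $f$ by linear combinations $f_n$ of kernels, pass to a subsequence converging $\sigma_\alpha$-a.e., and identify the limit with $f^*$. From $f_n \to f$ in $K_\phi$ you do get $J_\alpha f_n \to J_\alpha f$ in $L^2(\sigma_\alpha)$ and hence $\sigma_\alpha$-a.e.\ along a subsequence, and $J_\alpha f_n = f_n^*$ $\sigma_\alpha$-a.e. But to conclude the limit is $f^*$ you need $f_n^* \to f^*$ in $L^2(\sigma_\alpha)$ (or at least $\sigma_\alpha$-a.e.), and convergence in $K_\phi$ controls boundary values only Lebesgue-a.e., not $\sigma_\alpha$-a.e., since $\sigma_\alpha$ is singular with respect to $m_2$. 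What closes this gap — and what the paper actually proves — is the \emph{boundedness estimate} $\|f^*\|_{L^2(\sigma_\alpha)}\lesssim \|f\|_{K_\phi}$, obtained by writing $f = (Q/p)h + \sum (R_j/p)g_j$ via Theorem \ref{thm:model}, using Proposition \ref{prop:QR} to relate $R_j/p$ to $Q/p$ on $E_\alpha$, and exploiting the key pointwise identity $|Q/p(\zeta,\overline{B_\alpha(\zeta)})|^2\, W_\alpha(\zeta) \equiv 1$ a.e.\ on $\mathbb{T}$; a separate estimate handles the $L_k$ terms. ``Tracking boundary behavior explicitly'' is precisely this estimate, and it is the substantive technical work of the proposition, not a routine add-on.

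For part (ii), your ``exceptional $\Rightarrow$ not unitary'' argument with $g = \overline{\zeta}_2\,\mathbf{1}_{\{\zeta_1 = \tau_k\}}$ is correct and is a mild variant of the paper's (the paper uses $\overline{\zeta}_2$ directly and notes that convergence in $L^2(\sigma_\alpha)$ of functions in $A(\mathbb{D}^2)$ would force $\overline{\zeta}_2\in H^2(\mathbb{D})$ on the line $L_k$). For ``generic $\Rightarrow$ unitary'', however, the observation that the products $\zeta^m\overline{B_\alpha(\zeta)}^n$ hit arbitrarily negative Fourier modes as $m,n$ grow is not a density argument: covering all frequencies in the \emph{union} of the supports does not imply the \emph{span} is dense (each individual $\zeta^m\overline{B_\alpha(\zeta)}^n$ has Fourier support unbounded below, and there is no diagonalization here), and you still must control $W_\alpha$, which vanishes at the $\tau_k$. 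You candidly flag this as the heaviest part, but as written there is no proof. The paper sidesteps this by an explicit construction: factoring $B_\alpha = \gamma\prod_{j=1}^n b_{a_j}$ (of full degree $n$ by Theorem \ref{thm:Ca} since $\alpha$ is generic), it sets $s_1(z) = b_{\overline{a}_1}^{-1}\bigl[\bigl(\gamma\prod_{j\ge 2}b_{a_j}(z_1)\bigr)z_2\bigr]$ and $s_2(z)=B_\alpha(z_1)$, both in $A(\mathbb{D}^2)$, and checks that on $E_\alpha$ they restrict to $\overline{\zeta}_1$ and $\overline{\zeta}_2$; every trigonometric monomial then agrees with a bidisk algebra function on $E_\alpha$, and Stone--Weierstrass finishes. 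You should either reproduce a construction of this type or supply a genuine density proof; the Fourier-mode heuristic alone does not suffice.
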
 

This result is in contrast to the one-variable case, where $J_\alpha$ is always unitary. Here, by computing the non-tangential values of $\phi$ at its finite number of singularities, this theorem allows us to easily identify whether a given $J_\alpha$ is unitary. Part (i) is true in the one-variable setting and follows from a famous (and more general) result of Poltoratski about normalized Cauchy transforms,  see  \cite{Pol93} and \cite[Theorem 10.3.1]{CMR}. Thus, our result can be viewed as a partial two-variable analogue of Poltoratski's result. Again, due to length, we break the proof into two pieces, Propositions \ref{thm:unitary} and  \ref{thm:Jalpha}.

Section \ref{sec:apps} connects our $(n,1)$ results to two related areas of study. First, in Theorem \ref{thm:R}, we connect Theorem \ref{thm:clark} to the theory of Agler decompositions and use our results about $\sigma_\alpha$ and $J_\alpha$ to establish formulas for some of the polynomials $R_j$ in \eqref{eqn:AD2}. Then, we observe that this study of Clark measures can be put into a more general context. Specifically, let $P(\mathbb{T}^2)$ denote the set of Borel probability measures on $\mathbb{T}^2$ equipped with the topology of weak-$\star$ convergence and define 
 \[ \mathcal{P}_2 = \{ f \in \text{Hol}(\mathbb{D}^2): \Re f(z) > 0 \text{ and } f(0, 0) =1\},\]
 which is compact in the topology of uniform convergence on compact subsets of $\mathbb{D}^2$.
Let $M:\mathcal{P}_2 \rightarrow P(\mathbb{T}^2)$ denote the map that takes each $f \in \mathcal{P}_2$ to the unique  
 Borel probability measure $\mu_f$ with 
\[ f(z) = \int_{\mathbb{T}^2} P_z(\zeta) d\mu_f(\zeta) \quad \text{ for } z\in \mathbb{D}^2. \]
Then both $\mathcal{P}_2$ and its image $M(\mathcal{P}_2)$ are compact convex sets and by the Krein-Milman theorem, equal the closed, convex hull of their extreme points. 
It is also easy to show that $f$ is an extreme point of $\mathcal{P}_2$ if and only if $\mu_f$ is an extreme point of $M(\mathcal{P}_2)$. In \cite{Rudin71}, Rudin posed the question
\begin{center} ``What are the extreme points of $\mathcal{P}_2$  (or equivalently, of $M(\mathcal{P}_2)$)?'' \end{center}
While this question is still open, a number of interesting examples and related results (often in the $n$-variable situation) have been proved by Forelli \cite{Forelli81}, Knese \cite{Kne19b}, and McDonald \cite{MacD82, MacD86, MacD87, MacD90}. As the Clark measures $\sigma_\alpha$ are trivially in $M(\mathcal{P}_2)$ when $\phi(0)=0$, it makes sense to consider our investigations in the context of Rudin's question and these subsequent results. In particular, the following  is a quick corollary of Theorem \ref{thm:clark} and a theorem from \cite{Kne19b}:

\begin{corollary}\label{cor:extreme}  Let $\phi =\frac{\tilde{p}}{p}$ be a degree $(n,1)$ RIF with $\tilde{p}(0,0)=0$. If $\alpha \in \mathbb{T}$, then:
\begin{itemize}
\item[i.] If $\alpha$ is an exceptional value for $\phi$, then $\sigma_\alpha$ is not an extreme point of $M(\mathcal{P}_2)$.
\item[ii.] If $p$ is saturated, $\deg p = \deg \tilde{p}$, and $\alpha$ is generic for $\phi$, then $\sigma_\alpha$ is an extreme point of $M(\mathcal{P}_2)$. 
\end{itemize}
\end{corollary}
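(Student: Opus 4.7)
The unifying input is Theorem \ref{thm:clark}: under the hypothesis $\tilde p(0,0)=0$ we have $\phi(0,0)=0$, so each $\sigma_\alpha$ is a probability measure and lies in $M(\mathcal{P}_2)$. Non-extremality in $M(\mathcal{P}_2)$ amounts to writing $\sigma_\alpha = t\mu_1+(1-t)\mu_2$ with $t\in(0,1)$ and distinct $\mu_1,\mu_2\in M(\mathcal{P}_2)$; equivalently, expressing the pluriharmonic function $\mathrm{Re}[(\alpha+\phi)/(\alpha-\phi)]$ as a nontrivial convex combination of two distinct positive pluriharmonic functions normalized to $1$ at the origin. Both parts are obtained by reading off the structure of $\sigma_\alpha$ from the formula in Theorem \ref{thm:clark}.

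For part (i), exceptionality of $\alpha$ produces an index $j$ with $c_j^\alpha>0$. I would set $\mu_1 := \delta_{\tau_j}\otimes m$ and $\mu_2 := (\sigma_\alpha - c_j^\alpha \mu_1)/(1-c_j^\alpha)$. A direct integration shows $\int_{\mathbb{T}^2} P_z\,d\mu_1 = (1-|z_1|^2)/|\tau_j-z_1|^2 = \mathrm{Re}[(\tau_j+z_1)/(\tau_j-z_1)]$, a positive pluriharmonic function equal to $1$ at the origin, so $\mu_1 \in M(\mathcal{P}_2)$. Theorem \ref{thm:clark} exhibits $\sigma_\alpha - c_j^\alpha \mu_1$ as the sum of the $\nu_\alpha$-pushforward and the remaining positive circle terms, so it is a positive measure; its Poisson integral, being a difference of pluriharmonic functions, is pluriharmonic, which places $\mu_2$ in $M(\mathcal{P}_2)$. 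The two remaining points---that $c_j^\alpha<1$ and that $\mu_1\neq\mu_2$---both reduce to the observation that $\mathrm{Re}[(\alpha+\phi)/(\alpha-\phi)]$ genuinely depends on $z_2$ (since $\deg_2\phi=1$ with $p$ and $\tilde p$ coprime), whereas the Poisson integral of $\mu_1$ is independent of $z_2$.

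For part (ii), genericity of $\alpha$ eliminates the circle contributions in Theorem \ref{thm:clark}, so $\sigma_\alpha = \pi_{\#}(W_\alpha\,dm)$ for the graph parameterization $\pi(\zeta) = (\zeta,\overline{B_\alpha(\zeta)})$ of the algebraic curve $E_\alpha\subset\mathbb{T}^2$. I would then invoke the extremality criterion from \cite{Kne19b} for positive pluriharmonic measures supported on such curves. The three hypotheses play complementary roles in verifying that criterion: saturation of $p$ forces $E_\alpha$ to be the full $\mathbb{T}^2$-trace of the distinguished variety $\{\tilde p=\alpha p\}$, equality $\deg p = \deg\tilde p$ prevents cancellation in $B_\alpha$ so that $\pi$ parameterizes $E_\alpha$ with the expected multiplicity, and genericity of $\alpha$ keeps $E_\alpha$ disjoint from the singular points $(\tau_k,\lambda_k)$, so that $\pi$ is regular on all of $\mathbb{T}$. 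After these verifications, Knese's theorem yields extremality.

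The main obstacle is part (ii): locating the precise statement in \cite{Kne19b} and translating its geometric hypotheses on the distinguished variety and measure density into checkable conditions on $B_\alpha$ and $W_\alpha$. Part (i), by contrast, is essentially bookkeeping once the convex decomposition is written down, with the only substantive point being the $z_2$-dependence argument used to rule out a trivial combination.
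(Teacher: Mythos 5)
Part (i) of your proposal is correct and matches the paper's argument: the paper peels off the term $c_1^\alpha(\delta_{\tau_1}\otimes m)$, normalizes the remainder to a probability measure $\hat\mu_\alpha$, checks that both pieces lie in $M(\mathcal{P}_2)$ via their Poisson integrals, and writes $\sigma_\alpha$ as the proper convex combination $(1-c_1^\alpha)\hat\mu_\alpha + c_1^\alpha(\delta_{\tau_1}\otimes m)$. Your route to $c_j^\alpha < 1$ via $z_2$-dependence of the Poisson integral works; the paper argues it even more directly from $\mu_\alpha(\mathbb{T}^2) > 0$. Either is fine.

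Part (ii) has a genuine gap, and you essentially flag it yourself. The paper reproduces the relevant result of Knese as Theorem~\ref{thm:knese}: if $q$ has no zeros on $\mathbb{D}^2$, $\tilde q$ is the reflection of $q$ with $\deg\tilde q = \deg q$, $q$ is $\mathbb{T}^2$-saturated, $q$ and $\tilde q$ share no common factors, $\tilde q(0,0)=0$, and $q-\tilde q$ is \emph{irreducible}, then $(q+\tilde q)/(q-\tilde q)$ is extreme in $\mathcal{P}_2$. The paper sets $q=\lambda p$ with $\lambda^2=\alpha$, so that $(q+\tilde q)/(q-\tilde q)=(\alpha+\phi)/(\alpha-\phi)$, and then nearly all hypotheses follow immediately from those on $\phi$: no zeros on $\mathbb{D}^2$, saturation, coprimality, $\tilde q(0,0)=0$, and $\deg\tilde q=\deg q$ precisely because $\deg p=\deg\tilde p$ (this is where that hypothesis enters, not as a multiplicity statement about the parametrization of $E_\alpha$). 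The one nontrivial step is irreducibility of $q-\tilde q$, a scalar multiple of $\tilde p-\alpha p$, and that is where genericity does its work: the proof of Theorem~\ref{thm:Ca} shows $\tilde p-\alpha p = (\tilde p_1-\alpha p_2)(z_1)\,\bigl(z_2-1/B_\alpha(z_1)\bigr)$ with no cancellation and no repeated linear factor, and any factor independent of $z_2$ would have to divide both $\tilde p_1-\alpha p_2$ and $\tilde p_2-\alpha p_1$, which is ruled out because one has all its zeros inside $\mathbb{D}$ and the other outside $\overline{\mathbb{D}}$. Your sketch does not mention irreducibility at all, and your attributed roles for saturation and genericity (full $\mathbb{T}^2$-trace of the distinguished variety, disjointness of $E_\alpha$ from singular points) do not track the actual hypotheses of Knese's theorem. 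To close the gap you need to state Knese's criterion precisely, make the substitution $q=\lambda p$, and supply the irreducibility argument above.
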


Part (i) of this corollary shows that in our setting, the more complicated Clark measures cannot be extreme points of $M(\mathcal{P}_2)$. Meanwhile part (ii) coupled with our earlier characterizations provide explicit formulas for some extreme points of $M(\mathcal{P}_2)$. We should mention that these appear somewhat related to the measures studied in \cite[Example 3]{MacD82}.

In the last section, we use our results to compute the Clark measures and study the $J_\alpha$ isometries associated to several degree $(n,1)$ RIFs. First, in Example \ref{ex:fave}, we use our results to recover Example 4.3 in \cite{D19}. 
Then in Example \ref{ex:AMY}, we apply our results to a more complicated degree $(2,1)$ RIF with a single singularity and in Example \ref{ex:3}, we study a degree $(3,1)$ RIF with two singularities. 
Finally, in Example \ref{ex:deg33}, we investigate a degree $(3,3)$ RIF $\phi$ with a singularity at $(1,1)$. In particular, we show that although $\phi^*(1,1) = -1$ and hence $\alpha =-1$ is an exceptional value for $\phi$, the operator $J_{-1}$ is unitary. This demonstrates that, in its current form, Theorem \ref{thm:isom}(ii) does not extend to general RIFs.

\section{Clark measures for RIFs} \label{sec:RIF}
We begin with some remarks about Clark measures associated with general RIFs on the bidisk, before focusing on the degree $(n,1)$ case.  First, the following result appears to be known, see for instance \cite[p.732]{MacD90} and \cite{LNprep}, but here, we give a simple proof in the RIF case by adapting some of the arguments from the one-variable proof of \cite[Theorem 9.2.1]{CMR}.

\begin{theorem} \label{thm:RIF1} If $\phi$ is a nonconstant RIF on $\mathbb{D}^2$ and $\alpha \in \mathbb{T}$, then $\sigma_{\alpha}$ does not possess any point masses.
\end{theorem}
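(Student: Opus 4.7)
The plan is to adapt the one-variable Clark-measure computation (as in \cite[Theorem 9.2.1]{CMR}) by restricting $\phi$ to complex lines through the origin, which turns the problem into one about finite Blaschke products in a single variable. Suppose for contradiction that $\sigma_\alpha(\{\zeta_0\}) = c_0 > 0$ for some $\zeta_0 \in \mathbb{T}^2$. Evaluating the defining Poisson representation of $\sigma_\alpha$ at $z = r\zeta_0$ for $r \in (0,1)$ and retaining only the point-mass contribution yields
\[
\frac{1-|\phi(r\zeta_0)|^2}{|\alpha-\phi(r\zeta_0)|^2} \;\geq\; c_0 \, P_{r\zeta_0}(\zeta_0) \;=\; c_0\,\frac{(1+r)^2}{(1-r)^2}.
\]
Combined with the elementary inequality $1-|\phi|^2 \leq 2|\alpha-\phi|$ (valid because $|\alpha|=1$ and $|\phi|<1$ on $\mathbb{D}^2$), this rearranges to $|\alpha - \phi(r\zeta_0)| \leq C(1-r)^2$ for $r$ close to $1$; in particular $\phi^*(\zeta_0)=\alpha$, since otherwise the left side of the display would stay bounded while the right side diverges.

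The key step is to contradict the bound $|\alpha-\phi(r\zeta_0)| \leq C(1-r)^2$ by producing a linear lower bound $|\alpha-\phi(r\zeta_0)| \geq c(1-r)$. To do this I would pass to the one-variable restriction $\psi(r) := \phi(r\zeta_0)$ for $r\in \mathbb{D}$. Since $\phi$ is rational and the maximum principle forces $|\phi|<1$ strictly on $\mathbb{D}^2$, the function $\psi$ is a rational function with $|\psi|<1$ on $\mathbb{D}$. For any $r_0\in\mathbb{T}$, the path $s r_0\zeta_0 \to r_0\zeta_0$ (as $s\to 1^-$) is radial in each coordinate, hence non-tangential in $\mathbb{D}^2$, so the radial limit $\psi^*(r_0)$ equals $\phi^*(r_0\zeta_0)$, which is unimodular by \cite[Corollary 14.6]{Kne15}. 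Thus $\psi$ is rational inner on $\mathbb{D}$; moreover it cannot be constant, since a constant value would simultaneously satisfy $|c|<1$ (from $\psi\colon\mathbb{D}\to\mathbb{D}$) and $|c|=1$ (from unimodular boundary values). Therefore $\psi$ is a finite Blaschke product of positive degree.

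Finite Blaschke products extend analytically across $\mathbb{T}$ with non-vanishing boundary derivatives; combined with $\psi(1)=\phi^*(\zeta_0)=\alpha$, Taylor expansion gives $\psi(r)-\alpha = \psi'(1)(r-1) + O((r-1)^2)$ with $\psi'(1)\neq 0$, hence $|\phi(r\zeta_0) - \alpha| \geq c(1-r)$ for $r$ near $1$. This contradicts the $O((1-r)^2)$ decay derived earlier, forcing $\sigma_\alpha(\{\zeta_0\})=0$.

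The conceptual heart of the argument, and its only nontrivial step, is recognizing that the one-variable slice $\psi$ is a genuine finite Blaschke product: this uses the strict maximum principle on $\phi$ (to prevent $\psi$ from being a unimodular constant) together with the universal boundary regularity of RIFs from \cite[Corollary 14.6]{Kne15} (to ensure unimodular boundary values for $\psi$ even when the line $\{r\zeta_0\colon r\in\mathbb{D}\}$ passes through singularities of $\phi$). Once this reduction is in place, the Julia--Carath\'eodory-type rigidity of one-variable finite Blaschke products does the rest.
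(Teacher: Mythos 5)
Your proof is correct and follows essentially the same strategy as the paper's: reduce to the one-variable slice through the origin passing through the potential atom, observe that this slice is a nonconstant finite Blaschke product, and exploit the nonvanishing of its boundary derivative. The paper phrases this as a direct computation — it invokes the Cauchy-kernel identity from \cite[Proposition 2.6]{D19}, multiplies by $(1-r)^2$, applies dominated convergence to extract $\sigma_\alpha\{(1,1)\}$ on the left, and shows the right side tends to $0$ using $\theta(z)=\phi(z,z)$ and $\theta'(1)\neq 0$ — whereas you run it as a proof by contradiction with the Poisson kernel, pitting the $O((1-r)^2)$ upper bound coming from the hypothetical point mass against the $\gtrsim (1-r)$ lower bound from the slice's nonzero angular derivative; this is the same mechanism in a different wrapper.
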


\begin{proof} Without loss of generality, we will show that $\sigma_\alpha$ does not possess a point mass at $(1,1)$. First, applying  \cite[Proposition 2.6]{D19} with $w=(0,0)$ yields
\begin{equation} \label{eqn:pm} \int_{\mathbb{T}^2}  \frac{1}{(1-z_1 \bar{\zeta}_1)(1-z_2 \bar{\zeta}_2)} d \sigma_{\alpha}(\zeta) = \frac{1-\phi(z) \overline{\phi(0,0)}}{(1-\bar{\alpha} \phi(z))(1-\alpha \overline{\phi(0,0)})}.\end{equation}
Observe that $\theta(z) := \phi(z,z)$ is a nonconstant finite Blaschke product.  Then for $0<r<1$, set $z =r$ and multiply both sides of \eqref{eqn:pm} by $(1-r)^2$ to get
\begin{equation} \label{eqn:pm2} \int_{\mathbb{T}^2} \frac{(1-r)^2}{(1-r \bar{\zeta}_1)(1-r \bar{\zeta}_2)} d \sigma_{\alpha}(\zeta) =  \frac{(1-r)^2 (1-\theta(r) \overline{\theta(0)})}{(1-\bar{\alpha} \theta(r))(1-\alpha \overline{\theta(0)})}.\end{equation}
 Observe that 
\[ \lim_{r \nearrow 1} \frac{(1-r)^2}{(1-r \bar{\zeta}_1)(1-r \bar{\zeta}_2)} =\left\{ \begin{array}{cc} 1 & \text{ if } \zeta = (1,1), \\
0 & \text{otherwise.} \end{array} \right.\]
Since $\sigma_\alpha$ is a finite measure, the dominated convergence theorem implies
\[ \lim_{r \nearrow 1} \int_{\mathbb{T}^2}  \frac{(1-r)^2}{(1-r \bar{\zeta}_1)(1-r \bar{\zeta}_2)} d \sigma_{\alpha}(\zeta) 
=\sigma_\alpha\{ (1,1) \}.\]
As $\theta$ is a nonconstant finite Blaschke product, $\theta(1)$ exists and equals some $\lambda \in \mathbb{T}$ and its derivative $\theta'(1)$ exists and is nonzero, see \cite[Lemma 7.5]{GMR2}. If $\lambda \ne \alpha$, then
\[  \lim_{r \nearrow 1}   \frac{(1-r)^2 (1-\theta(r) \overline{\theta(0)})}{(1-\bar{\alpha} \theta(r))(1-\alpha \overline{\theta(0)})} = 
\lim_{r \nearrow 1} (1-r)^2  \frac{1-\lambda \overline{\theta(0)}}{(1-\bar{\alpha} \lambda)(1-\alpha \overline{\theta(0)})} =0.\]
If $\lambda = \alpha$, then 
\[ 
\begin{aligned} \lim_{r \nearrow 1}   \frac{(1-r)^2 (1-\theta(r) \overline{\theta(0)})}{(1-\bar{\alpha} \theta(r))(1-\alpha \overline{\theta(0)})} &=
 \frac{1-\alpha \overline{\theta(0)}}{\bar{\alpha}- \overline{\theta(0)}} \cdot  \lim_{r\nearrow 1} \frac{ 1-r}{\theta(1)- \theta(r)} \cdot \lim_{r\nearrow 1} (1-r) \\
 & =\alpha  \cdot \frac{1}{\theta'(1)} \cdot 0 =0. 
 \end{aligned}\]
 Equating the two sides in \eqref{eqn:pm2} implies that $\sigma_\alpha\{ (1,1) \} =0.$
\end{proof}

We can say a little bit more about Clark measures associated with RIFs. The papers \cite{BPS17,BPSprep} include several results concerning boundary behavior of two-variable RIFs, and in particular, the structure of their unimodular level sets. Recall from \eqref{eqn:Ca1} that for $\alpha \in \mathbb{T}$ and $\phi = \tilde{p}/p$, 
\[\mathcal{C}_{\alpha}=\{\zeta \in \mathbb{T}^2\colon \tilde{p}(\zeta)=\alpha p(\zeta)\}.\]
Then each $\mathcal{C}_{\alpha}$ satisifes $\mathcal{C}_{\alpha}=\{\zeta \in \mathbb{T}^2\colon \phi^*(\zeta)=\alpha\}\cup (\mathcal{Z}_p\cap \mathbb{T}^2)$, and as was shown in \cite[Theorem 2.8]{BPSprep}, the components of $\mathcal{C}_{\alpha}$ can be locally parametrized using one-variable analytic functions. Intuitively speaking, this implies that, for each $\alpha$, the Clark measure $\sigma_{\alpha}$ of any two-variable RIF has support contained in a one-dimensional subset of $\mathbb{T}^2$.  (We should mention that, technically speaking, the result in \cite{BPSprep} was proved for $\phi = \frac{\tilde{p}}{p}$ with $\deg p = \deg \tilde{p}$, but that assumption does not appear to materially affect the conclusions.)  As an aside, we also note that general pluriharmonic measures on $\mathbb{T}^2$ can have substantially larger, and even two-dimensional support, viz. \cite{MacD90}.

It should be noted that knowing that $\sigma_{\alpha}$ is a Clark measure associated with {\it some} RIF and is supported on {\it some} set of the form $\{\zeta \in \mathbb{T}^2\colon \tilde{p}(\zeta)=\alpha p(\zeta)\}$ does not suffice to determine that measure (or its associated RIF) uniquely. Indeed, one can exhibit (see Example \ref{ex:AMY}) two different RIFs $\phi_1=\frac{\tilde{p}_1}{p_1}$ and $\phi_2=\frac{\tilde{p}_2}{p_2}$ whose Clark measures are not multiples of each other but are both supported on the same set $\{\zeta\in \mathbb{T}^2\colon \tilde{p}_1(\zeta)=\alpha p_1(\zeta)\}=\{\zeta \in \mathbb{T}^2\colon \tilde{p}_2(\zeta)=\alpha p_2(\zeta)\}$ for some $\alpha \in \mathbb{T}$. Thus, in order to study Clark measures and isometries for two-variable RIFs, we will need to perform a structural analysis of the measures $\sigma_{\alpha}$ that goes beyond determining their supports.
 
 \section{Clark measures for Degree $(n,1)$ RIFs} \label{sec:n1}

Throughout the rest of this paper, we let $\phi = \tilde{p}/p$ denote a fixed degree $(n,1)$ rational inner function for some $n \ge 1$. Recall that we can decompose $p$ as in \eqref{eqn:p1p2}. Then the polynomials $p$, $\tilde{p}$ share no common factors. Moreover, $p$ has no zeros on $\mathbb{D}^2 \cup (\mathbb{D} \times \mathbb{T}) \cup (\mathbb{T} \times \mathbb{D})$ and at most $n$ distinct zeros on $\mathbb{T}^2.$ See for example, Lemma 10.1, the proof of Corollary 13.5, and Appendix $C$ in \cite{Kne15}. 

\begin{remark} \label{rem:notation} For a given degree $(n,1)$ RIF function $\phi$ and $\alpha \in \mathbb{T}$, recall the objects from Definition \ref{def:1} and define the following additional objects: \\

\begin{itemize}

\item $\nu_\alpha$ is the measure on $\mathbb{T}$ defined by $d \nu_\alpha  := W_\alpha dm$. \\

\item $Q$ and $R_1, \dots, R_n$ are the polynomials given in Theorem \ref{thm:model}. \\

\item $b_1, \dots, b_n$ are the rational functions in the disk algebra $A(\mathbb{D})$ from Proposition \ref{prop:QR}. \\

\item For $f \in K_\phi$, $h, g_1, \dots, g_n$ are the $H^2(\mathbb{D})$ functions given in Theorem \ref{thm:model}.

\end{itemize}
\end{remark} 

Finally, recall that $\alpha \in \mathbb{T}$ is an \emph{exceptional value} for $\phi$ if there is a $k$ such that
 $\phi^*(\tau_k, \lambda_k) = \alpha$ and $\alpha \in \mathbb{T}$ is a \emph{generic value} for $\phi$ otherwise.

\subsection{Model Space Preliminaries}  \label{sec:model}

Clark measures are closely related to the model space $K_\phi$ and so, we pause to record some known facts about $K_\phi$ in the degree $(n,1)$ case.

\begin{theorem} \label{thm:model} There are polynomials $Q, R_1, \dots, R_n \in \mathbb{C}[z_1,z_2]$ such that $\deg R_j \le (n-1,1)$, $\deg Q \le (n,0)$ and for $z,w \in \mathbb{C}^2$, 
\begin{equation} \label{eqn:AD1}  p(z) \overline{p(w)} - \tilde{p}(z) \overline{\tilde{p}(w)} = (1-z_1 \bar{w}_1) \sum_{j=1}^n R_j(z) \overline{R_j(w)}+ (1-z_2\bar{w}_2) Q(z) \overline{Q(w)}. \end{equation}
Furthermore, each $R_j$ and $Q$ vanish at each $(\tau_k,\lambda_k)$ and a function $f \in K_\phi$ if and only if there exist $g_1, \dots, g_n, h \in H^2(\mathbb{D})$ such that 
\begin{equation} \label{eqn:ktheta}  f(z) = \tfrac{Q}{p}(z) h(z_1)+\sum_{j=1}^n \tfrac{R_j}{p}(z) g_j(z_2) \quad \text{ for } z\in \mathbb{D}^2.\end{equation}
Finally, if $f \in K_\phi$ is written as in \eqref{eqn:ktheta}, then
\[ \| f\|_{K_\phi}^2 = \| f\|^2_{H^2(\mathbb{D}^2)} =  \| h\|^2_{H^2(\mathbb{D})}+\sum_{j=1}^n \| g_j\|^2_{H^2(\mathbb{D})}.\]
\end{theorem}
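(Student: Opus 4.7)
The statement bundles three claims: the Agler sum-of-squares identity \eqref{eqn:AD1}, vanishing of $Q$ and each $R_j$ at every torus singularity $(\tau_k,\lambda_k)$, and the model-space representation \eqref{eqn:ktheta} with its additive norm formula. My plan is to address each in turn, leaning on the general theory of Agler decompositions for two-variable Schur functions and on standard reproducing-kernel arguments.

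For \eqref{eqn:AD1}, I would begin from Agler's existence theorem, which gives positive semi-definite kernels $K_1, K_2$ on $\mathbb{D}^2$ with $1-\overline{\phi(w)}\phi(z) = (1-z_1\bar{w}_1)K_1(z,w) + (1-z_2\bar{w}_2)K_2(z,w)$. Multiplying through by $p(z)\overline{p(w)}$ yields a polynomial identity of bidegree at most $(n,1)$ in both $z$ and $\bar{w}$. A careful bidegree/rank analysis then forces $p(z)\overline{p(w)}K_2$ to be independent of the second coordinate (a single square $Q(z_1)\overline{Q(w_1)}$ with $\deg Q\le (n,0)$) and $p(z)\overline{p(w)}K_1$ to be a sum of at most $n$ squares $R_j(z)\overline{R_j(w)}$ with $\deg R_j\le (n-1,1)$. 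This refined form of the decomposition for $(n,1)$ RIFs is standard in the bidisk RIF literature, and I would cite it (e.g., Knese's work) rather than rederive it from scratch.

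For the vanishing statement, fix a singularity $(\tau_k,\lambda_k)\in\mathbb{T}^2$ of $\phi$. Since $p(\tau_k,\lambda_k)=\tilde{p}(\tau_k,\lambda_k)=0$, setting $w=(\tau_k,\lambda_k)$ in \eqref{eqn:AD1} kills the left side and leaves
$$ 0 = (1-z_1\bar{\tau}_k)\sum_{j=1}^n R_j(z)\overline{R_j(\tau_k,\lambda_k)} \;+\; (1-z_2\bar{\lambda}_k)\,Q(z)\,\overline{Q(\tau_k,\lambda_k)} $$
for all $z\in\mathbb{D}^2$. Setting $z_1=\tau_k$ annihilates the first term and, together with $Q$ depending only on $z_1$, gives $(1-z_2\bar{\lambda}_k)|Q(\tau_k)|^2\equiv 0$, forcing $Q(\tau_k)=0$. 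Setting instead $z_2=\lambda_k$ kills the $Q$-term and yields $\sum_j R_j(z_1,\lambda_k)\overline{R_j(\tau_k,\lambda_k)}\equiv 0$ in $z_1$; evaluating at $z_1=\tau_k$ then gives $\sum_j|R_j(\tau_k,\lambda_k)|^2=0$, hence $R_j(\tau_k,\lambda_k)=0$ for every $j$.

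For the model-space description, I would substitute \eqref{eqn:AD1} into the formula $k_w(z)=(1-\overline{\phi(w)}\phi(z))C_w(z)$ for the reproducing kernel of $K_\phi$ to obtain
$$ k_w(z) = \frac{Q(z_1)\overline{Q(w_1)}}{p(z)\overline{p(w)}\,(1-z_1\bar{w}_1)} + \sum_{j=1}^n \frac{R_j(z)\overline{R_j(w)}}{p(z)\overline{p(w)}\,(1-z_2\bar{w}_2)}. $$
Each summand is the reproducing kernel of a natural Hilbert subspace: the first is the kernel of $\{\tfrac{Q(z_1)}{p(z)}h(z_1):h\in H^2(\mathbb{D})\}$ with norm $\|h\|_{H^2(\mathbb{D})}$, and the $j$-th summand on the right is the kernel of $\{\tfrac{R_j(z)}{p(z)}g_j(z_2):g_j\in H^2(\mathbb{D})\}$. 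The sum-of-kernels theorem then identifies $K_\phi$ with the vector sum of these $n+1$ subspaces, producing the representation \eqref{eqn:ktheta}, and the additive norm formula follows once one verifies that the representation is unique; that uniqueness reduces to linear independence of $Q$ and the $R_j$ as multipliers, which is forced by the rank bounds extracted in the first step. The main obstacle is precisely that first step: pinning down the exact degree-and-rank shape of the Agler kernels for an $(n,1)$ RIF is the technical heart of the argument and typically either invokes existing RIF results or requires a somewhat delicate polynomial/interpolation computation.
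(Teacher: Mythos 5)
Your outline follows essentially the same path as the paper's sketch: start from the Agler decomposition tailored to degree-$(n,1)$ RIFs, clear denominators to get \eqref{eqn:AD1}, then substitute back into the reproducing kernel of $K_\phi$ and invoke a sum-of-kernels argument for the representation and norm formula. Your middle paragraph, deriving the vanishing of $Q$ and each $R_j$ at $(\tau_k,\lambda_k)$ directly from \eqref{eqn:AD1} by plugging $w=(\tau_k,\lambda_k)$ and then specializing the free variables, is a nice self-contained argument that the paper merely asserts with a reference; that part is correct as written.

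The one place where your sketch has a real gap is the final norm-equality step. You assert that the additive norm formula ``follows once one verifies that the representation is unique,'' and that uniqueness ``reduces to linear independence of $Q$ and the $R_j$ as multipliers.'' That criterion is too weak. What the Aronszajn sum-of-kernels theorem actually buys you, from $k = K_1' + \sum_j k_j'$, is that $K_\phi$ is the (not a priori direct) sum of the corresponding Hilbert spaces with a \emph{minimum} norm; to upgrade to an orthogonal direct sum with additive norms you need all the pairwise intersections to be trivial, and for the $\sum_j\|g_j\|^2$ part you need in addition that the $R_j/p$ form an orthonormal basis of the finite-dimensional space $\mathcal{H}(K_2)$, not merely a linearly independent set of polynomials. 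Linear independence of $Q, R_1,\dots,R_n$ as polynomials does not by itself rule out relations of the form $\tfrac{R_j}{p}g_j = \tfrac{R_{j'}}{p}g_{j'}$ for nonconstant $g_j, g_{j'}\in H^2(\mathbb{D})$. The paper's route (following Ball--Sadosky--Vinnikov and Bickel--Knese) builds the orthogonality in from the start: $Q/p$ and $R_1/p,\dots,R_n/p$ are chosen as orthonormal bases of the wandering subspaces $\mathcal{H}(K_1) = S_1^{\max}\ominus z_1 S_1^{\max}$ and $\mathcal{H}(K_2) = S_2^{\min}\ominus z_2 S_2^{\min}$, and the Wold-type decompositions $S_1^{\max} = \bigoplus_m z_1^m\mathcal{H}(K_1)$, $S_2^{\min} = \bigoplus_m z_2^m\mathcal{H}(K_2)$ together with $K_\phi = S_1^{\max}\oplus S_2^{\min}$ give the norm identity immediately. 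To close your version, you would either need to prove that the spaces $\{\tfrac{Q}{p}h\}$, $\{\tfrac{R_j}{p}g_j\}$ are pairwise orthogonal (which in effect reproduces the operator-theoretic argument), or simply cite the norm formula as the paper does (Remark~2.3 of \cite{bg18}).
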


\begin{proof} As this result is not new, we just give some intuition and references for the different components of the theorem. First, note that on $H^2(\mathbb{D}^2),$ there are two shift operators, $M_{z_1}$ and $M_{z_2}$, defined by $(M_{z_i}f)(z) = z_if(z)$ for $i=1,2.$  Let  $S^{\mathrm{max}}_1$ be the maximal $M_{z_1}$-invariant subspace of $K_{\phi}$, where $M_{z_1}$ is multiplication by $z_1$. Then, while not obvious, it is true that $S^{\mathrm{min}}_2 :=K_{\phi} \ominus S^{\mathrm{max}}_1$ is invariant under $M_{z_2}$, see  \cite{bsv05, b12}.  Let $K_1, K_2$ denote the reproducing kernels of the two Hilbert spaces
\[ S^{\mathrm{max}}_1 \ominus M_{z_1}  S^{\mathrm{max}}_1:=\mathcal{H}(K_1)  \ \ \text{ and }   \ \ S^{\mathrm{min}}_2 \ominus M_{z_2}  S^{\mathrm{min}}_2:=\mathcal{H}(K_2),\]
respectively. This yields the Agler decomposition
\[  1-\phi(z) \overline{\phi(w)} = (1-z_1\bar{w}_1) K_2(z,w) + (1-z_2 \bar{w}_2) K_1(z,w).\]
Since $\phi$ is a degree $(n,1)$ RIF, one can show that $\dim \mathcal{H}(K_1)=1$ and $\dim \mathcal{H}(K_2)=n$. Let $Q/p$ be an orthonormal basis for $\mathcal{H}(K_1)$
 and  $R_1/p, \dots, R_n/p$ be an orthonormal basis for $\mathcal{H}(K_2)$.  One can  show that the $Q, R_j$  are polynomials with $\deg Q \le (n,0)$ and  $\deg R_j \le (n-1, 1)$ and each $Q, R_j$ vanishes at each $(\tau_k, \lambda_k)$.  Furthermore, 
\[ K_1(z,w) =  \frac{Q(z)\overline{Q(w)}}{p(z) \overline{p(w)}} \text{ and }   K_2(z,w) = \sum_{j=1}^n  \frac{R_j(z) \overline{R_j(w)}}{p(z) \overline{p(w)}},\]
and substituting the formulas into the Agler decomposition and multiplying through by the denominator gives \eqref{eqn:AD1}.
For the details, see for example \cite{bsv05, bickne13, Kne10} and the references within.

The characterization of functions in $K_\phi$ from \eqref{eqn:ktheta} follows from the fact that the reproducing kernel $k(z,w)$ of $K_\phi$ satisfies
\[ k(z,w)  =  \frac{1}{1-z_2\bar{w}_2} \sum_{j=1}^n  \frac{R_j(z) \overline{R_j(w)}}{p(z) \overline{p(w)}} + \frac{1}{1-z_1\bar{w}_1} \frac{Q(z)\overline{Q(w)}}{p(z) \overline{p(w)}}\] 
and from standard properties of reproducing kernels. A proof of the formula for the norm of functions in $K_\phi$ can be found, for example, in Remark 2.3 in \cite{bg18}.
\end{proof}

\subsection{Support Sets and Consequences} \label{sec:support}

In this subsection, we obtain some information about the objects from Definition \ref{def:1} and Remark \ref{rem:notation}. First, recall that  $\mathcal{C}_\alpha$ from \eqref{eqn:Ca1} contains the support of $\sigma_\alpha.$ This theorem gives additional information about $\mathcal{C}_\alpha$ and $B_\alpha$.

\begin{theorem} \label{thm:Ca} Let $\alpha \in \mathbb{T}$.
\begin{itemize}
\item[i.] Then $B_\alpha$ is a finite Blaschke product.
\item[ii.] Let $\alpha$ be a generic value of $\phi$. Then $\deg B_\alpha = n$ and $\mathcal{C}_\alpha$ equals $E_\alpha$.
\item[iii.] Let $\alpha$ be an exceptional value of $\phi$ and (after reordering if necessary), assume $\phi^*(\tau_k, \lambda_k) =\alpha$ for $k=1,\dots, \ell$. Then $\deg B_\alpha = n-\ell$ and  $\mathcal{C}_\alpha$ is $E_\alpha \cup (\cup_{k=1}^{\ell} L_k)$. 
\end{itemize}
\end{theorem}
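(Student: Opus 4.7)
The plan is to exploit the key algebraic identity
\[
\tilde{p}(z)-\alpha p(z) \;=\; z_2\bigl(\tilde{p}_1(z_1)-\alpha p_2(z_1)\bigr)-\bigl(\alpha p_1(z_1)-\tilde{p}_2(z_1)\bigr),
\]
which displays the zero set of $\tilde{p}-\alpha p$ in $\mathbb{C}^2$ as the union of the graph $z_2=1/B_\alpha(z_1)$ and possibly some ``vertical'' fibers $\{\tau\}\times\mathbb{C}$ coming from common zeros of the two $z_1$-polynomials $\tilde{p}_1-\alpha p_2$ and $\alpha p_1-\tilde{p}_2$. All three parts of the theorem should flow from this decomposition.

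For (i), I would first use the reflection identity $\tilde{p}_i(\zeta)=\zeta^n\overline{p_i(\zeta)}$ for $\zeta\in\mathbb{T}$ to check by direct expansion that $|\tilde{p}_1(\zeta)-\alpha p_2(\zeta)|=|\alpha p_1(\zeta)-\tilde{p}_2(\zeta)|$ on $\mathbb{T}$. Writing $B_\alpha=N/D$ in lowest terms, this forces $D$ to be zero-free on $\mathbb{T}$. Next, for $z_1\in\mathbb{D}$ with $D(z_1)\ne 0$, the identity above shows that $z_2=1/B_\alpha(z_1)$ is the unique $z_2\in\mathbb{C}$ solving $\tilde{p}(z_1,z_2)=\alpha p(z_1,z_2)$, and $|\phi|<1$ on $\mathbb{D}^2$ prohibits $z_2\in\mathbb{D}$, giving $|B_\alpha(z_1)|\le 1$. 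Together with unimodularity on $\mathbb{T}$, this makes $B_\alpha$ a finite Blaschke product.

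For (ii) and (iii), any $(\zeta_1,\zeta_2)\in\mathcal{C}_\alpha$ must satisfy either $(\tilde{p}_1-\alpha p_2)(\zeta_1)\ne 0$, which places it on $E_\alpha$ via $\zeta_2=\overline{B_\alpha(\zeta_1)}$, or both coefficient polynomials vanish at $\zeta_1$. In the latter case, substituting the reflection formula and using $|\alpha|=1$ yields $|p_1(\zeta_1)|=|p_2(\zeta_1)|$, with both nonzero because $p$ and $\tilde{p}$ share no common factors. Thus $\zeta_1$ is a root of $p$ on $\mathbb{T}$, so $\zeta_1=\tau_k$ for some $k$, and cancelling the common $(z_2-\lambda_k)$ factor from $p(\tau_k,\cdot)$ and $\tilde{p}(\tau_k,\cdot)$ identifies the constant value of $\phi$ on $L_k$ as $\tilde{p}_1(\tau_k)/p_2(\tau_k)=\phi^*(\tau_k,\lambda_k)$, which equals $\alpha$. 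The reverse calculation shows that if $\phi^*(\tau_k,\lambda_k)=\alpha$ then the entire line $L_k$ lies in $\mathcal{C}_\alpha$. This yields the generic description $\mathcal{C}_\alpha=E_\alpha$ and the exceptional description $\mathcal{C}_\alpha=E_\alpha\cup\bigcup_{k=1}^{\ell}L_k$.

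The degree of $B_\alpha$ I would extract by slicing. Pick $\lambda\in\mathbb{T}$ avoiding $\lambda_1,\dots,\lambda_m$ and a further finite exceptional set. Then $z_1\mapsto\phi(z_1,\lambda)$ is a one-variable finite Blaschke product of degree exactly $n$, so it attains $\alpha$ at precisely $n$ points on $\mathbb{T}$. These split as the $\ell$ points $(\tau_k,\lambda)$ coming from the lines $L_k$ and the remaining $n-\ell$ points on $E_\alpha$; the latter are, via $\zeta_2=\overline{B_\alpha(\zeta_1)}$, exactly the $\mathbb{T}$-preimages of $\bar\lambda$ under the Blaschke product $B_\alpha$. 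Hence $\deg B_\alpha=n-\ell$, specializing to $n$ when $\alpha$ is generic. The main technical obstacle is justifying that the one-variable slice $\phi(\cdot,\lambda)$ genuinely has degree exactly $n$ for almost every $\lambda\in\mathbb{T}$, which requires ruling out cancellation between $\lambda\tilde{p}_1(z_1)+\tilde{p}_2(z_1)$ and $p_1(z_1)+\lambda p_2(z_1)$ outside a finite set of bad $\lambda$, and ultimately rests on $\gcd(p,\tilde{p})=1$.
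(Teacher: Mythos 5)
Your overall plan is sound, and parts (i) and the decomposition of $\mathcal{C}_\alpha$ into $E_\alpha$ plus fibers are handled correctly. The degree argument you give (slicing at $z_2 = \lambda$ and counting preimages of $\alpha$ under the Blaschke product $\phi(\cdot,\lambda)$) is a genuinely different route from the paper's, which instead shows directly that the degree-$n$ coefficient of $\tilde{p}_1 - \alpha p_2$ equals $\overline{p_1(0)} - \alpha\,\overline{\tilde{p}_2(0)} \neq 0$ (from $|\phi(0,0)| < 1$) and then subtracts the $\ell$ cancelled factors. Your counting version works but requires more bookkeeping: you must avoid $\lambda_1,\dots,\lambda_m$, the values $\overline{B_\alpha(\tau_k)}=\lambda_k$, and any $\lambda$ dropping the degree of the slice, and you acknowledge but do not close that last point. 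The paper's algebraic route sidesteps all of this.

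There is, however, one genuine gap. You assert that the constant value $\tilde{p}_1(\tau_k)/p_2(\tau_k)$ of $\phi$ on $L_k \setminus \{\lambda_k\}$ equals $\phi^*(\tau_k,\lambda_k)$, the non-tangential limit from $\mathbb{D}^2$. This is not automatic: near $(\tau_k,\lambda_k)$ both numerator and denominator of $\phi$ vanish to first order, and the linear expansions show that the value of $\phi$ along a path depends on the ratio of $(z_1-\tau_k)$ to $(z_2-\lambda_k)$ unless the two linear parts are proportional. Establishing that the linear homogeneous parts $P_1$ and $Q_1$ of $p$ and $\tilde{p}$ at $\tau=(\tau_k,\lambda_k)$ satisfy $Q_1 = \phi^*(\tau)\, P_1$ is precisely Knese's Propositions 14.3 and 14.5, and the paper isolates this as a separate lemma (their Lemma 3.4) which also uses that $M=1$ and $P_1$ has a nonzero $z_2$-component. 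Your proof treats this identification as self-evident, but it is the crux of linking the level-set condition $L_k\subseteq\mathcal{C}_\alpha$ to the non-tangential datum $\phi^*(\tau_k,\lambda_k)=\alpha$ that appears in the theorem's statement. You would need either to invoke those results of Knese or to reproduce the homogeneous-expansion argument for the $(n,1)$ case.
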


\begin{proof} Fix $\alpha \in \mathbb{T}$  and observe that $\mathcal{C}_{\alpha}$ is the set of $\zeta \in \mathbb{T}^2$ satisfying
\[ 
\begin{aligned}
0= \tilde{p}(\zeta) - \alpha p(\zeta) &= \zeta_2(\tilde{p}_1-\alpha p_2)(\zeta_1) + (\tilde{p}_2 -\alpha p_1)(\zeta_1)\\
& = 
(\tilde{p}_1-\alpha p_2)(\zeta_1) \left(\zeta_2 - 1/B_\alpha(\zeta_1) \right).
\end{aligned}
\] 
Too see that $B_\alpha$ is a finite Blaschke product, observe that if
  \[ r = \alpha p_1 -\tilde{p}_2, \text{ then } \tilde{r} = \bar{\alpha} \tilde{p_1} -p_2.\]
Since $|r| = |\tilde{r}|$ on $\mathbb{T}$, this implies $| B_{\alpha}(\zeta)|=1$ on $\mathbb{T}$. 
As $\phi$ is nonconstant, $|\phi(z_1,0)| = |\frac{\tilde{p}_2}{p_1}(z_1)| <1$ on  $\mathbb{D}$ and so,  $\alpha p_1 -\tilde{p}_2$ is nonvanishing on $\mathbb{D}$. This implies $B_\alpha$ is a finite Blaschke product and the common zeros of its original numerator and denominator are exactly the zeros of $\tilde{p}_1-\alpha p_2$ in $\mathbb{T}$. Denote those zeros by $\gamma_1, \dots, \gamma_\ell$. Then
\begin{equation} \label{eqn:Balpha2}  \tilde{p}(z) - \alpha p(z)  = q(z_1)\left(z_2 - 1/B_\alpha(z_1)  \right)\prod_{k=1}^\ell(z_1-\gamma_k),\end{equation}
where $q \in \mathbb{C}[z]$ is the numerator of $B_\alpha$ once common terms have been cancelled. Equation \eqref{eqn:Balpha2} shows  that $\mathcal{C}_{\alpha}$ is the set of lines $\{\gamma_k\} \times \mathbb{T}$ for $k=1, \dots, \ell$ and $E_\alpha.$ By   Lemma \ref{lem:factor}(i), we can further assume that  $\gamma_1,\dots, \gamma_\ell$ are distinct.

To establish $\deg B_\alpha$, note that $\deg (\tilde{p}_1-\alpha p_2) = n$. This occurs because since $|\phi|<1$ on $\mathbb{D}^2,$ we have $|p_1(0)| > |\tilde{p}_2(0)|$ and thus, the coefficient of the degree $n$ term in  $\tilde{p}_1-\alpha p_2$ is nonzero. Thus, $\deg B_\alpha = n-\ell$, its original degree minus the number of cancelled terms or equivalently, the number of lines of the form $\{\gamma\} \times \mathbb{T}$ in $\mathcal{C}_\alpha$. 

For (ii), let $\alpha$ be generic. By Lemma \ref{lem:nt} below, $\mathcal{C}_\alpha$ cannot contain any lines of the form $\{\gamma\} \times \mathbb{T}$ and so, our arguments give $\mathcal{C}_\alpha = E_\alpha$ and $\deg B_\alpha =n$.

For (iii), let $\alpha$ be exceptional and (after reordering if necessary), assume $\phi^*(\tau_k, \lambda_k) =\alpha$ for $k=1,\dots, \ell$. Then  Lemma \ref{lem:nt} implies that $L_1, \dots, L_\ell$ are  exactly the lines of the form $\{\gamma\} \times \mathbb{T}$ in $\mathcal{C}_\alpha$. Then the above arguments imply  $\mathcal{C}_\alpha = E_\alpha \cup (\cup_{k=1}^{\ell} L_k)$ and $\deg B_\alpha = n-\ell.$
\end{proof}

In the above proof, we used the following two lemmas.
 
 \begin{lemma}\label{lem:nt}  For $\gamma \in \mathbb{T}$, the set $\mathcal{C}_{\alpha}$ contains $\{\gamma\}\times \mathbb{T}$ if and only if $\gamma =\tau_k$ for some $k$ and $\phi^*(\tau_k,\lambda_k) = \alpha.$
 \end{lemma}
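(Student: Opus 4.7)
The plan is to translate the inclusion $\{\gamma\}\times \mathbb{T} \subseteq \mathcal{C}_\alpha$ into an algebraic identity on the coefficients $p_i(\gamma)$ and $\tilde{p}_i(\gamma)$, and then to connect that identity to a formula for $\phi^*$ at a singular point. Using the decomposition \eqref{eqn:p1p2} together with $\tilde{p}(z)=z_2\tilde{p}_1(z_1)+\tilde{p}_2(z_1)$, one sees that $\tilde{p}(\gamma, z_2)-\alpha p(\gamma, z_2) = z_2[\tilde{p}_1(\gamma)-\alpha p_2(\gamma)] + [\tilde{p}_2(\gamma)-\alpha p_1(\gamma)]$ has degree at most one in $z_2$, so it vanishes on $\mathbb{T}$ if and only if it vanishes identically, if and only if
\begin{equation*}
\tilde{p}_1(\gamma) = \alpha p_2(\gamma) \quad \text{and} \quad \tilde{p}_2(\gamma) = \alpha p_1(\gamma).
\end{equation*}
Call these relations $(\star)$. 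The lemma then amounts to the assertion that $(\star)$ is equivalent to $\gamma=\tau_k$ for some $k$ with $\phi^*(\tau_k, \lambda_k)=\alpha$.

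For the forward direction, $(\star)$ together with $|\tilde{p}_i(\gamma)| = |p_i(\gamma)|$ (since $\gamma \in \mathbb{T}$) yields $|p_1(\gamma)| = |p_2(\gamma)|$. The degenerate case $p_1(\gamma)=p_2(\gamma)=0$ is ruled out because it forces $\tilde{p}_1(\gamma)=\tilde{p}_2(\gamma)=0$ as well, making $(z_1-\gamma)$ a common factor of $p$ and $\tilde{p}$, contrary to coprimality. Hence $p_2(\gamma)\neq 0$ and $\lambda:=-p_1(\gamma)/p_2(\gamma)\in \mathbb{T}$ satisfies $p(\gamma, \lambda)=0$, so $\gamma=\tau_k$ and $\lambda=\lambda_k$ for some $k$. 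To compute $\phi^*(\tau_k,\lambda_k)$, I would expand $\tilde{p}$ and $p$ to first order about $(\tau_k, \lambda_k)$ along the non-tangential curves $z(s)=((1-s)\tau_k, (1-\kappa s)\lambda_k)$, $s\to 0^+$, $\kappa>0$. Since both polynomials vanish at $(\tau_k, \lambda_k)$, one obtains
\[ \lim_{s\to 0^+}\phi(z(s)) = \frac{\tau_k\,\tilde{p}_{z_1}(\tau_k,\lambda_k) + \kappa\lambda_k\,\tilde{p}_{z_2}(\tau_k,\lambda_k)}{\tau_k\,p_{z_1}(\tau_k,\lambda_k) + \kappa\lambda_k\,p_{z_2}(\tau_k,\lambda_k)}. \]
Corollary 14.6 of \cite{Kne15} guarantees the non-tangential limit exists, forcing this rational function of $\kappa$ to be constant; elementary calculus then yields $\tilde{p}_{z_1}p_{z_2} = \tilde{p}_{z_2}p_{z_1}$ at $(\tau_k, \lambda_k)$, and the common ratio is $\tilde{p}_{z_2}/p_{z_2} = \tilde{p}_1(\tau_k)/p_2(\tau_k)$, which equals $\alpha$ by $(\star)$.

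The reverse direction follows quickly from the same formula: if $\phi^*(\tau_k,\lambda_k)=\alpha$, then $\tilde{p}_1(\tau_k)/p_2(\tau_k)=\alpha$ gives the first identity in $(\star)$, where $p_2(\tau_k)\neq 0$ is again justified by coprimality of $p$ and $\tilde{p}$. The second identity in $(\star)$ is automatic from the vanishing relations $p(\tau_k, \lambda_k)=0$ and $\tilde{p}(\tau_k,\lambda_k)=0$, which yield $p_1(\tau_k)=-\lambda_k p_2(\tau_k)$ and $\tilde{p}_2(\tau_k) = -\lambda_k\tilde{p}_1(\tau_k) = -\lambda_k\alpha p_2(\tau_k) = \alpha p_1(\tau_k)$. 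I expect the main obstacle to be the middle step: Knese's corollary makes the singular-point limit meaningful, but extracting the explicit value $\phi^*(\tau_k,\lambda_k)=\tilde{p}_1(\tau_k)/p_2(\tau_k)$ requires the direction-independence argument via Taylor expansion, and care must be taken to rule out $p_2(\tau_k)=0$ so that this ratio is well-defined.
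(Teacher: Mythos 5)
Your proof is correct and shares the same skeleton as the paper's argument: reduce the inclusion $\{\gamma\}\times\mathbb{T}\subseteq\mathcal{C}_\alpha$ to the coefficient identities $(\star)$, use the unimodularity of $\tilde p_i/p_i$ on $\mathbb{T}$ together with coprimality to locate a singular point $(\tau_k,\lambda_k)$ on the line, and then compute $\phi^*(\tau_k,\lambda_k)$. Where you genuinely diverge is in that last step. The paper invokes Knese's Propositions 14.3 and 14.5, which assert outright that the leading homogeneous parts of $p$ and $\tilde p$ at a boundary zero are proportional with ratio tied to $\phi^*$, deduces $M=1$ from coprimality, and then reads off $\phi(\tau_k,\zeta_2)\equiv\lambda$ from the linear homogeneous term. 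You instead re-derive the essential first-order fact from scratch: expand $p$ and $\tilde p$ to first order along the two-parameter family of radial curves $z(s)=((1-s)\tau_k,(1-\kappa s)\lambda_k)$ and use the mere existence of the non-tangential limit (Corollary 14.6) to force $\kappa$-independence of the Möbius function $\kappa\mapsto(\tau_k\tilde p_{z_1}+\kappa\lambda_k\tilde p_{z_2})/(\tau_k p_{z_1}+\kappa\lambda_k p_{z_2})$, which pins down $\phi^*(\tau_k,\lambda_k)=\tilde p_{z_2}/p_{z_2}=\tilde p_1(\tau_k)/p_2(\tau_k)$. Your version is more self-contained (it needs only the existence statement of Corollary 14.6, not the homogeneous-expansion propositions) at the cost of some extra bookkeeping. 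One point you should make explicit rather than defer to the closing sentence: the nondegeneracy $p_2(\tau_k)\neq 0$, which you have from coprimality, is what guarantees both that the first-order Taylor term dominates along $z(s)$ (so the displayed limit formula is actually valid for all but at most one $\kappa$) and that the Möbius function of $\kappa$ is nonconstant unless numerator and denominator are proportional. Also note the formula $\phi^*(\tau_k,\lambda_k)=\tilde p_1(\tau_k)/p_2(\tau_k)$ is derived without using $(\star)$, which is exactly what makes it legitimately reusable in your reverse direction.
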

 
 \begin{proof} Observe that $\{\gamma\}\times \mathbb{T} \subseteq \mathcal{C}_{\alpha}$ if and only if $\phi^*(\gamma, \zeta_2) \equiv \alpha$ for all $\zeta_2 \in \mathbb{T}$, except maybe at one $\zeta_2$ where $p(\gamma,\zeta_2)=0.$

Thus, for the forward direction, we can assume $\phi^*(\gamma, \zeta_2) \equiv \alpha$ (except maybe at one $\zeta_2)$. As our assumptions imply $\deg \tilde{p}(\gamma, \zeta_2)=1$, the polynomials $p(\gamma, \cdot), \tilde{p}(\gamma, \cdot)$ must share a common factor with a zero on $\mathbb{T}$, say $(z_2 - \beta).$ This implies that $p(\gamma, \beta) =\tilde{p}(\gamma, \beta) =0$. Thus, $(\gamma, \beta) = (\tau_k, \lambda_k)$ for some $k.$

Set $\tau:=(\tau_k, \lambda_k)$ and write
\[ p(z) = \sum_{j=M}^{n+1}  P_j(\tau- z) \  \text{ and } \tilde{p}(z) = \sum_{j=M}^{n+1} Q_j(\tau - z),\]
where $P_j$ and $Q_j$ are homogeneous polynomials in $z_1, z_2$ of degree $j$ and $M \ge 1$. Define $\lambda = \phi^*(\tau)$.  By \cite[Propositions 14.3, 14.5]{Kne15}, $P_M = \lambda Q_M$ and since $\tilde{p}, p$ share no common factors, we can conclude that $M=1$ and $P_1$ contains a term $cz_2$ with $c \ne 0$. Then
\[ \alpha \equiv \phi(\tau_k, \zeta_2 ) = \frac{\lambda c (\lambda_k-\zeta_2)}{c(\lambda_k-\zeta_2)} = \lambda,\]
so $\alpha = \phi^*(\tau).$  Similarly, if $\alpha = \phi^*(\tau)$ for some $\tau=(\tau_k,\lambda_k)$, the above equality and  arguments imply that we also have $\alpha \equiv \phi^*(\tau_k, \zeta_2)$ for all $\zeta_2 \in \mathbb{T}\setminus \{\lambda_k\}$  and so $\{\tau_k\}\times \mathbb{T}$ is in $\mathcal{C}_{\alpha}$. 
 \end{proof}
 
 The following lemma describes finer behavior of $B_\alpha$ and $\phi$ related to the singularities $(\tau_k, \lambda_k)$ for $k=1, \dots, m$.
 
     \begin{lemma}\label{lem:factor}  For $\alpha \in \mathbb{T}$,
   \begin{itemize}
   \item[i.] $\tilde{p}- \alpha p$ and $\tilde{p}_1-\alpha p_2$ do not possess repeated linear factors $(z_1-\gamma)^2$ with $\gamma\in \mathbb{T}.$
   \item[ii.] $\overline{B_\alpha(\tau_k)} = \lambda_k$ for $k=1, \dots, m$.
   \item[iii.] If $\mathcal{C}_\alpha$ contains $L_k$, then for all $z_2 \in \overline{\mathbb{D}}$ with $z_2 \ne \lambda_k$, $\frac{\partial \phi}{\partial z_1} (\tau_k,z_2)$ equals a fixed nonzero constant $C$. 
   \end{itemize}
 \end{lemma}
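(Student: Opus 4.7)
The three statements are closely linked and all flow from a single Taylor expansion at the singularity $\tau = (\tau_k, \lambda_k)$. Extending the argument in the proof of Lemma~\ref{lem:nt}, I would write $p(z) = \sum_{j \ge 1} P_j(\tau - z)$ and $\tilde p(z) = \sum_{j \ge 1} Q_j(\tau - z)$ with $P_1(w) = a w_1 + c w_2$, $c \ne 0$, and $Q_1 = \alpha_0 P_1$ where $\alpha_0 := \phi^*(\tau)$. The bidegree constraint $(n,1)$, applied to the pure $w_2^j$ coefficients and to the mixed $w_1 w_2^{j-1}$ coefficients (the latter exploiting that $p_2'(z_1)$ and $\tilde p_1'(z_1)$ depend only on $z_1$), forces $p_2'(\tau_k) = B$ and $\tilde p_1'(\tau_k) = B'$, where $B$ and $B'$ denote the $w_1 w_2$-coefficients of $P_2$ and $Q_2$, respectively. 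Together with the evaluations $p_1(\tau_k) = c \lambda_k$, $p_2(\tau_k) = -c$, $\tilde p_1(\tau_k) = -\alpha_0 c$, $\tilde p_2(\tau_k) = \alpha_0 c \lambda_k$, a direct computation yields
\[
(\tilde p - \alpha p)(\tau_k, z_2) = (\alpha_0 - \alpha) c (\lambda_k - z_2)
\]
and
\[
\partial_{z_1}(\tilde p - \alpha p)(\tau_k, z_2) = (\alpha - \alpha_0)\, a + (B' - \alpha B)(z_2 - \lambda_k),
\]
from which $(z_1 - \tau_k)^2$ divides $\tilde p - \alpha p$ if and only if $\alpha = \alpha_0$ and $B' = \alpha B$. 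Extracting the coefficient of $z_2$ gives the same characterization for $\tilde p_1 - \alpha p_2$. Since any $\gamma \in \mathbb{T}$ with $(z_1 - \gamma) \mid \tilde p_1 - \alpha p_2$ must satisfy $|p_1(\gamma)| = |p_2(\gamma)|$ (by the modulus identity $|\tilde p_1| = |p_1|$ on $\mathbb{T}$), forcing $\gamma$ to be a singularity $\tau_k$, statement (i) reduces to showing $B' \ne \alpha_0 B$ at every singularity.

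The main obstacle is this nondegeneracy. I would assume toward contradiction that $B' = \alpha_0 B$, write $\tilde p - \alpha_0 p = (z_1 - \tau_k)^2 h$ for some $h$ of bidegree at most $(n-2, 1)$, and combine the reflection identity $\widetilde{\tilde p - \alpha_0 p} = -\bar\alpha_0 (\tilde p - \alpha_0 p)$ with $\widetilde{(z_1 - \tau_k)^2 h} = \bar\tau_k^2 (z_1 - \tau_k)^2 \tilde h$ (taking reflection of $h$ at bidegree $(n-2, 1)$) to derive the functional equation $\tilde h = -\bar\alpha_0 \tau_k^2 h$, expressing $h$ itself as an ``inner-type'' polynomial of strictly smaller bidegree. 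Then using the induced factorizations $\tilde p_1 - \alpha_0 p_2 = (z_1 - \tau_k)^2 s_0$ and $\alpha_0 p_1 - \tilde p_2 = (z_1 - \tau_k)^2 r_0$ to eliminate $\tilde p_1$ and $\tilde p_2$ in favor of $p_1, p_2, s_0, r_0$, the aim is to produce a nontrivial common polynomial factor of $p$ and $\tilde p$, contradicting the RIF hypothesis. A more analytic alternative is to invoke the local real-analytic parametrization of $\mathcal{C}_\alpha$ at singularities from~\cite{BPSprep}, which forces the line component $\{\tau_k\} \times \mathbb{T}$ of the variety $\{\tilde p = \alpha p\}$ to have geometric multiplicity exactly one.

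Once (i) is in hand, (ii) follows by substitution: for $\alpha \ne \alpha_0$, the numerator and denominator of $B_\alpha(\tau_k) = s(\tau_k)/r(\tau_k)$ evaluate to $c(\alpha - \alpha_0)$ and $c\lambda_k(\alpha - \alpha_0)$, giving $B_\alpha(\tau_k) = \bar\lambda_k$; for $\alpha = \alpha_0$ both vanish, (i) guarantees the common zero is simple, and l'Hospital yields $B_\alpha(\tau_k) = s'(\tau_k)/r'(\tau_k) = (B' - \alpha B)/[\lambda_k (B' - \alpha B)] = \bar\lambda_k$. For (iii), the hypothesis $L_k \subseteq \mathcal{C}_\alpha$ forces $\alpha = \alpha_0$, and the quotient rule applied with the explicit evaluations produces
\[
\partial_{z_1} \phi(\tau_k, z_2) = -\frac{B' - \alpha B}{c}
\]
after cancellation of a common $(\lambda_k - z_2)^2$ factor from numerator and denominator; this is the constant $C$, independent of $z_2$ and nonzero by (i).
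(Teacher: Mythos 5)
The gap is in part (i), and it is decisive because (ii) and (iii) both rest on (i). After the Taylor expansion at $\tau = (\tau_k,\lambda_k)$ you correctly reduce (i) to a nondegeneracy claim, which you phrase as $B' \neq \alpha_0 B$, but then you only outline two possible lines of attack without carrying either out. The reflection-identity route (producing a common factor of $p$ and $\tilde p$) is not close to a proof: the resulting functional equation $\tilde h = -\bar\alpha_0\tau_k^2 h$ says $h$ is ``inner-type'' of lower degree, but nothing in that statement obviously forces $h$ to share a factor with $p$, and you do not explain how the eliminations using $s_0, r_0$ would close the argument. The appeal to the parametrization from \cite{BPSprep} is likewise just an invocation, not a proof. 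Since you explicitly label this ``the main obstacle'' and write in the conditional (``I would,'' ``the aim is to''), part (i) is simply not proved.

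The paper's argument for (i) avoids all of this Taylor-coefficient bookkeeping with a clean one-variable observation. First, the factorization \eqref{eqn:Balpha2} shows that $(z_1-\gamma)^k \mid \tilde p - \alpha p$ if and only if $(z_1-\gamma)^k \mid \tilde p_1 - \alpha p_2$, so it suffices to argue for $\tilde p - \alpha p$. If $(z_1-\gamma)^2 \mid \tilde p - \alpha p$, then on the vertical line $z_1 = \gamma = \tau_k$ both $\tilde p - \alpha p$ and $\partial_{z_1}(\tilde p - \alpha p)$ vanish identically, so $\partial_{z_1}\phi(\tau_k,\zeta_2) = 0$ for every $\zeta_2 \neq \lambda_k$. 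But fix any $\lambda \in \mathbb{T}$ avoiding $\lambda_1,\dots,\lambda_m$: then $\phi(\cdot,\lambda)$ is a nonconstant finite Blaschke product (since $p(\cdot,\lambda)$ has no zeros on $\overline{\mathbb D}$), and a nonconstant finite Blaschke product has nonvanishing derivative on $\mathbb{T}$ (\cite[Lemma~7.5]{GMR2}). This contradiction finishes (i). That is the idea you are missing; once you have it, the heavy coefficient analysis becomes unnecessary. Your computations for (ii) and (iii) are correct in spirit and close to what the paper does (evaluation plus a cancellation of $(z_2-\lambda_k)$, with l'Hospital in the degenerate case for (ii)), but they depend on the unproved nondegeneracy and so do not stand on their own.
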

 
 \begin{proof} By the factorization in \eqref{eqn:Balpha2},
 it is easy to see that for $k \in \mathbb{N}$, $(z_1-\gamma)^k$ is a factor of $\tilde{p}- \alpha p$ if and only if it is a factor of $\tilde{p}_1-\alpha p_2.$
 Then, by way of contradiction, assume $\tilde{p}- \alpha p$ is divisible by $(z_1-\gamma)^2$. Then, 
  $\gamma =\tau_k$ for some $k$ and for $\zeta_2 \in \mathbb{T} \setminus \{\lambda_k\}$, we have  $\phi(\gamma, \zeta_2) \equiv \alpha$ and
 \[ \tfrac{\partial \phi}{\partial z_1}(\gamma, \zeta_2) =\frac{ p\frac{ \partial \tilde{p}}{\partial z_1} -\tilde{p} \frac{ \partial p}{\partial z_1}}{p^2}(\gamma, \zeta_2) = \frac{ \frac{ \partial \tilde{p}}{\partial z_1} -\alpha \frac{ \partial p}{\partial z_1}}{p}(\gamma, \zeta_2) =0.\]
 Fix any $\lambda\in \mathbb{T}$ with $\lambda \not \in \{ \lambda_1, \dots, \lambda_m\}$. Then $\phi_\lambda(z_1) : = \phi(z_1, \lambda)$ is a nonconstant finite Blaschke product and so, $\phi'_\lambda(\gamma) \ne 0$. See, for example, Lemma 7.5 in \cite{GMR2}. Since
 \[ 0 \ne \phi'_\lambda (\gamma) =  \tfrac{\partial \phi}{\partial z_1}(\gamma, \lambda) =0\]
by the above argument, we obtain the requisite contradiction. 
 
 For (ii), let $\tau := (\tau_k,\lambda_k)$. First assume that $\phi^*(\tau) \ne \alpha$. As $\tilde{p}(\tau)=p(\tau)=0$, it follows by definition that $\tau \in \mathcal{C}_\alpha$. Thus, Theorem \ref{thm:Ca} implies $\lambda_k = \overline{B_\alpha(\tau_k)}.$ Now if  $\phi^*(\tau) = \alpha$, then as in the proof of Lemma \ref{lem:nt}, we can write:
 \[ (p-\alpha \tilde{p})(z) = \sum_{j=2}^{n+1} (P_j-\alpha Q_j)(z-\tau) = (\tau_k-z_1) G(z),\]
where $Q_j, P_j$ are homogeneous polynomials of degree $j$ and $G$ is a polynomial. Here, we used the fact that $P_1 =\alpha Q_1$ and $\deg (p-\alpha \tilde{p}) \le (n,1)$. From this, it is clear that $G(\tau) =0.$ Using the proof of Theorem \ref{thm:Ca}, we know $(z_1-\tau_k)$ divides $\tilde{p}_1-\alpha p_2$ and so,
\[ (\tau_k-z_1) G(z) =
(\tilde{p}_1-\alpha p_2)(z_1) \left(z_2 - 1/B_\alpha(z_1) \right)= r(z_1) (\tau_k-z_1)  \left(z_2 - 1/B_\alpha(z_1) \right).\]
 for some  $r \in \mathbb{C}[z]$. By (i), $r(\tau_k) \ne 0$. Dividing through by $(\tau_k-z_1)$ and plugging in $\tau$ implies $\lambda_k= 1/B_\alpha(\tau_k) = \overline{B_\alpha(\tau_k)}.$
 
 For (iii), by the assumptions, there is  some  $r \in \mathbb{C}[z]$ with $r(\tau_k) \ne 0$ such that
 \[(\tilde{p}-\alpha p)(z) = (z_1-\tau_k) r(z_1) (z_2- 1/B_\alpha(z_1)). \]
Then for $z_2 \in \mathbb{C} \setminus \{\lambda_k\}$, we have  $\phi(\tau_k, z_2) \equiv \alpha$ and we 
can use (ii) to conclude
 \[ \tfrac{\partial \phi}{\partial z_1}(\tau_k, z_2) =\frac{ \frac{ \partial \tilde{p}}{\partial z_1} -\alpha \frac{ \partial p}{\partial z_1}}{p}(\tau_k, z_2) = \frac{r(\tau_k) (z_2- \lambda_k)}{c(z_2-\lambda_k)} =\frac{r(\tau_k)}{c},\] 
for some $c \ne 0$.
 \end{proof}
  
 We can also use the set $\mathcal{C}_\alpha$ to refine our understanding of the polynomials $R_1, \dots, R_n, Q$ from Theorem \ref{thm:model} as follows:
 \begin{proposition} \label{prop:QR}  $R_1, \dots, R_n, Q$ satisfy the following properties:
 \begin{itemize}
 \item[i.] For $\zeta_1 \in \mathbb{T}$,  $|Q(\zeta_1)|^2 = |p_1(\zeta_1)|^2 -  |p_2(\zeta_1)|^2$.
 \item[ii.] For $z \in \overline{\mathbb{D}}^2$, $R_j(z) = r_j(z_1) \left( 1- B_{\alpha}(z_1)z_2\right) + z_2 Q(z_1) b_j(z_1)$, for some unique $r_j \in \mathbb{C}[z]$ with $\deg r_j \le (n-1)$ and rational $b_j \in A(\mathbb{D}).$   
 \end{itemize}
 \end{proposition}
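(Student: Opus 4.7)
For (i), I set $w = z$ in \eqref{eqn:AD1} with $z_1 = \zeta_1 \in \mathbb{T}$ and $z_2 \in \mathbb{D}$. The $R_j$-sum vanishes because $1 - |\zeta_1|^2 = 0$, leaving $|p(\zeta_1, z_2)|^2 - |\tilde p(\zeta_1, z_2)|^2 = (1 - |z_2|^2)|Q(\zeta_1)|^2$. Expanding $|p|^2$ and $|\tilde p|^2$ via $p = p_1 + z_2 p_2$, $\tilde p = z_2 \tilde p_1 + \tilde p_2$, together with the boundary relation $\tilde p_i(\zeta_1) = \zeta_1^n \overline{p_i(\zeta_1)}$ on $\mathbb{T}$, the $z_2$-cross terms cancel between $|p|^2$ and $|\tilde p|^2$, reducing the left side to $(1 - |z_2|^2)(|p_1(\zeta_1)|^2 - |p_2(\zeta_1)|^2)$, which yields (i).

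For (ii), since $\deg R_j \le (n-1, 1)$, write $R_j(z) = A_j(z_1) + z_2 C_j(z_1)$ with $\deg A_j, C_j \le n-1$. Matching the $z_2^0$ and $z_2^1$ coefficients of the proposed decomposition $R_j = r_j(1 - B_\alpha z_2) + z_2 Q b_j$ forces $r_j = A_j$ and $b_j = (C_j + A_j B_\alpha)/Q$, giving both existence of candidates and uniqueness. The candidate $b_j$ is automatically rational, and since $B_\alpha$ is a finite Blaschke product (hence in $A(\mathbb{D})$), the numerator $C_j + A_j B_\alpha$ lies in $A(\mathbb{D})$; the remaining issue is to show that any pole coming from a zero of $Q$ in $\overline{\mathbb{D}}$ is canceled.

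For zeros of $Q$ on $\mathbb{T}$: part (i) implies $Q(\eta) = 0$ forces $|p_1(\eta)| = |p_2(\eta)|$, which via a M\"obius computation shows $\phi(\eta, \cdot)$ is unimodular, and then Lemma \ref{lem:nt} identifies $\eta = \tau_k$ for some $k$. At $\tau_k$, Theorem \ref{thm:model} gives $R_j(\tau_k, \lambda_k) = 0$, and Lemma \ref{lem:factor}(ii) gives $1/B_\alpha(\tau_k) = \lambda_k$, so
\[ (C_j + A_j B_\alpha)(\tau_k) = B_\alpha(\tau_k)\bigl(A_j(\tau_k) + \lambda_k C_j(\tau_k)\bigr) = B_\alpha(\tau_k) R_j(\tau_k, \lambda_k) = 0, \]
canceling the pole to first order. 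For zeros of $Q$ in $\mathbb{D}$ (and for higher-order vanishing at $\tau_k$), the plan is to substitute $w = (\zeta_1, 1/B_\alpha(\zeta_1)) \in \mathcal{C}_\alpha$ into \eqref{eqn:AD1}, use $\tilde p(w) = \alpha p(w)$, match coefficients of $z_2^0$ and $z_2^1$, and combine them via the relation $B_\alpha(\alpha p_1 - \tilde p_2) = \tilde p_1 - \alpha p_2$ to extract the key identity
\[ Q(z_1)\overline{Q(\zeta_1)}\bigl(B_\alpha(\zeta_1) - B_\alpha(z_1)\bigr) = (1 - z_1 \bar\zeta_1) \sum_{j=1}^n \bigl(C_j(z_1) + B_\alpha(z_1) A_j(z_1)\bigr)\overline{R_j(\zeta_1, 1/B_\alpha(\zeta_1))}. \]
Evaluating at $z_1 = \eta$ with $Q(\eta) = 0$ and $\eta \in \mathbb{D}$ gives a vanishing linear relation $\sum_j c_j \overline{R_j(\zeta_1, 1/B_\alpha(\zeta_1))} \equiv 0$ on $\mathbb{T}$ (with $c_j = (C_j + A_j B_\alpha)(\eta)$); converting this to a rational identity in $\zeta_1$ via $\bar\zeta_1 = 1/\zeta_1$ and applying a B\'ezout-type degree argument against the variety $\{\tilde p = \alpha p\}$ (which has degree $(n, 1)$ in the generic case), together with the polynomial linear independence of $\{R_1, \dots, R_n\}$, forces each $c_j = 0$. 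The main obstacle will be carefully tracking multiplicities (which requires differentiating the key identity in $z_1$ and iterating) and adapting the degree argument in the exceptional case where $\deg B_\alpha = n - \ell < n$, where the $R_j$'s carry additional vanishing constraints at the $(\tau_k, \lambda_k)$ coming from Theorem \ref{thm:model}.
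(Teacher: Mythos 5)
Your part (i) is correct and is essentially the paper's argument: set $w=z=(\zeta_1, z_2)$ in \eqref{eqn:AD1} so the $R_j$-sum drops out, expand, and use $\tilde p_i(\zeta_1)=\zeta_1^n\overline{p_i(\zeta_1)}$ to cancel the cross terms.

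Your part (ii) takes a genuinely different route, and the intermediate identity you derive is correct: substituting $w=(\zeta_1, 1/B_\alpha(\zeta_1))$ into \eqref{eqn:AD1}, matching $z_2^0$ and $z_2^1$ coefficients and combining, one does get
\[
Q(z_1)\overline{Q(\zeta_1)}\bigl(B_\alpha(\zeta_1)-B_\alpha(z_1)\bigr)=(1-z_1\bar\zeta_1)\sum_{j=1}^n\bigl(C_j+A_jB_\alpha\bigr)(z_1)\,\overline{R_j(\zeta_1,1/B_\alpha(\zeta_1))}.
\]
Your treatment of simple zeros of $Q$ on $\mathbb{T}$ (identifying them with the $\tau_k$ via part (i), then using $R_j(\tau_k,\lambda_k)=0$ and Lemma \ref{lem:factor}(ii)) is also correct. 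But the rest of the plan has a real gap that you acknowledge: for zeros of $Q$ in $\mathbb{D}$ with multiplicity $\ge 2$, and in the exceptional case, the ``B\'ezout plus linear independence'' step does not close as stated. When $\alpha$ is exceptional with $\deg B_\alpha=n-\ell$, the Zariski closure of $E_\alpha$ is a curve of degree only $(n-\ell,1)$, so a nonzero polynomial of degree $\le(n-1,1)$ can vanish on $E_\alpha$ without being identically zero; the degree argument alone then cannot force $\sum_j\bar c_jR_j\equiv 0$. And indeed, Theorem \ref{thm:R} shows that exactly $\ell$ of the $R_j$ vanish identically on $E_\alpha$, so $\{R_j|_{E_\alpha}\}$ is not linearly independent; for those $j$, $C_j+A_jB_\alpha\equiv 0$ and hence $b_j=0$ automatically, but your argument never isolates that case, and without it the conclusion ``each $c_j=0$'' is false. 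The multiplicity bookkeeping in the generic case would require a separate induction that you have not carried out.

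The paper circumvents all of this at once. Setting \emph{both} $z_2=1/B_\alpha(z_1)$ and $w_2=1/B_\alpha(w_1)$ in \eqref{eqn:AD1} and rearranging gives, with $G_j:=B_\alpha r_j+q_j=C_j+A_jB_\alpha$,
\[
\sum_{j=1}^n G_j(z_1)\overline{G_j(w_1)}=Q(z_1)\overline{Q(w_1)}\,\frac{1-B_\alpha(z_1)\overline{B_\alpha(w_1)}}{1-z_1\bar w_1},
\]
i.e.\ a sum-of-squares representation of $Q\bar Q\cdot k_{B_\alpha}$, the reproducing kernel of $Q\cdot\hat K_{B_\alpha}$. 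Standard reproducing-kernel theory then forces each $G_j\in Q\cdot\hat K_{B_\alpha}$, so $G_j=Qb_j$ with $b_j\in\hat K_{B_\alpha}$, which is automatically a rational function in $A(\mathbb{D})$. This makes the divisibility by $Q$ (including all multiplicities, and in the exceptional case) automatic, and it gives the sharper information $b_j\in\hat K_{B_\alpha}$ that the paper uses later in Theorem \ref{thm:R}. Your coefficient-matching does correctly establish the uniqueness of $r_j$ and $b_j$ once existence is known, since $Q\not\equiv 0$.
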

 
 \begin{proof} Part (i) follows from some algebra; substituting $z=w=(\zeta_1, z_2)$ into \eqref{eqn:AD1} gives
 \[ |p(\zeta_1, z_2)|^2- |\tilde{p}(\zeta_1, z_2)|^2 = (1-|z_2|^2) |Q(\zeta_1)|^2.\]
 The left-hand-side becomes 
 \[ |p_1(\zeta_1)+ z_2p_2(\zeta_1)|^2 - |z_2 \overline{p_1(\zeta_1)} +   \overline{p_2(\zeta_1)}|^2 = (1-|z_2|^2) \left( |p_1(\zeta_1)|^2 -  |p_2(\zeta_1)|^2\right),\]
 and dividing by $(1-|z_2|^2)$ gives the desired formula.
 
 For (ii), recall that $\tilde{p}(z)=\alpha p(z)$ whenever $z_2 = 1/B_\alpha(z_1)$. 
 Substituting $z_2 = 1/B_{\alpha}(z_1)$ and  $w_2 = 1/B_{\alpha}(w_1)$ into \eqref{eqn:AD1} gives
\[ 0 = (1-z_1 \bar{w}_1) \sum_{j=1}^n R_j(z_1, 1/B_{\alpha}(z_1)) \overline{R_j(w_1, 1/B_{\alpha}(w_1))} + (1- 1/B_{\alpha}(z_1) \overline{ 1/B_{\alpha}(w_1)})Q(z_1) \overline{Q(w_1)},\]
for all $z_1, w_1$ where $B_\alpha(z_1) \ne 0$.
We can rewrite this as
\[ B_{\alpha}(z_1) \overline{B_{\alpha}(w_1)} \sum_{j=1}^n R_j(z_1, 1/B_{\alpha}(z_1)) \overline{R_j(w_1, 1/B_{\alpha}(w_1))} = \frac{1- B_{\alpha}(z_1) \overline{B_{\alpha}(w_1)}}{1-z_1 \bar{w}_1} Q(z_1) \overline{Q(w_1)},\]
for $z_1, w_1 \in \mathbb{D}.$
 Then the right-hand-side is the reproducing kernel of the one-variable model space $\hat{K}_{B_\alpha}:=H^2(\mathbb{D}) \ominus B_\alpha H^2(\mathbb{D})$ (which is composed of rational functions in $A(\mathbb{D})$, see  \cite[Chapter 5]{GMR}) times the $Q$ term. To finish the proof, fix $R_j$ and by Theorem \ref{thm:model}, write
  \[ R_j(z) = r_j(z_1) + z_2 q_j(z_1),\] 
  for $r_j , q_j \in \mathbb{C}[z]$. Then standard properties of reproducing kernels imply that for  some  $b_j \in \hat{K}_{B_\alpha}$,
\[ B_\alpha(z_1)  r_j(z_1) +  q_j(z_1) =  b_j(z_1) Q(z_1) \]
for $z_1 \in \overline{\mathbb{D}}.$ Solving this for $q_j$ and substituting back into the formula for $R_j$ yields the desired result. 
 \end{proof}
 
In what follows, we will also require information about the weight function $W$
 \begin{equation} \label{eqn:W} W(x, \zeta) =W_{x}(\zeta)  =\frac{ |p_1(\zeta)|^2 - |p_2(\zeta)|^2}{|\tilde{p}_1(\zeta)- x p_2(\zeta)|^2} \text{ for } (x, \zeta) \in \mathbb{T}^2.\end{equation}
 
 \begin{lemma} \label{lem:Wbd} The function $W$ from \eqref{eqn:W} satisfies the following properties:
\begin{itemize}
\item[i.] $W$ is well defined and continuous on $\mathbb{T}^2$, except possibly at the finite set of points $(\alpha, \tau_k)$ where $\alpha$ is exceptional for $\phi$ and  $L_k \subseteq \mathcal{C}_{\alpha}.$
\item[ii.] For $\alpha \in \mathbb{T}$, $W_\alpha$ has at most a finite number of discontinuities on $\mathbb{T}$, all of which are removable. So, $W_\alpha$ equals a bounded, continuous function $m$-a.e.~on $\mathbb{T}.$
 \end{itemize}
 \end{lemma}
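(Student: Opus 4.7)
The plan is to identify the zeros of the denominator $|\tilde{p}_1(\zeta) - x p_2(\zeta)|^2$ on $\mathbb{T}^2$, show they form exactly the finite exceptional set in (i), and then compare vanishing orders of numerator and denominator to prove removability in (ii). Two facts I would use throughout: on $\mathbb{T}$, $|\tilde{p}_1(\zeta)| = |p_1(\zeta)|$ (since $\tilde{p}_1(\zeta) = \zeta^n \overline{p_1(\zeta)}$), and by Proposition \ref{prop:QR}(i), the numerator of $W$ equals $|Q(\zeta)|^2$.

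For (i), suppose $(\alpha_0, \zeta_0) \in \mathbb{T}^2$ satisfies $\tilde{p}_1(\zeta_0) = \alpha_0 p_2(\zeta_0)$. I would first rule out the degenerate case $p_2(\zeta_0) = 0$: this would force $\tilde{p}_1(\zeta_0) = 0$ and hence $p_1(\zeta_0) = 0$, so that $p(\zeta_0, z_2) = p_1(\zeta_0) + z_2 p_2(\zeta_0) \equiv 0$, which contradicts $p$ having no zeros on $\mathbb{T} \times \mathbb{D}$. Therefore $p_2(\zeta_0) \neq 0$ and $\alpha_0 = \tilde{p}_1(\zeta_0)/p_2(\zeta_0)$, so $\zeta_0$ is a $\mathbb{T}$-root of $\tilde{p}_1 - \alpha_0 p_2$. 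By the proof of Theorem \ref{thm:Ca}, such roots parametrize the lines $\{\gamma\} \times \mathbb{T}$ in $\mathcal{C}_{\alpha_0}$, and by Lemma \ref{lem:nt} they are exactly the $\tau_k$ with $\phi^*(\tau_k, \lambda_k) = \alpha_0$, i.e., with $L_k \subseteq \mathcal{C}_{\alpha_0}$ and $\alpha_0$ exceptional for $\phi$. Off this finite set the denominator is strictly positive and both pieces are continuous, so $W$ is continuous there, giving (i).

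For (ii), fix $\alpha \in \mathbb{T}$. By (i), $W_\alpha$ can only fail to be continuous at the finitely many $\tau_k$ with $L_k \subseteq \mathcal{C}_\alpha$ (empty unless $\alpha$ is exceptional). At each such $\tau_k$, Lemma \ref{lem:factor}(i) says $(z_1 - \tau_k)$ is a simple factor of $\tilde{p}_1 - \alpha p_2$, so $|\tilde{p}_1(\zeta) - \alpha p_2(\zeta)|^2$ vanishes at $\tau_k$ to exact order $2$. Meanwhile $Q(\tau_k) = 0$ by Theorem \ref{thm:model}, so $|Q(\zeta)|^2$ vanishes there to order at least $2$. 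The quotient therefore extends continuously to $\tau_k$ with a finite value, so the singularity is removable. Patching these finitely many removable singularities produces a bounded continuous function on $\mathbb{T}$ (bounded by compactness) that agrees with $W_\alpha$ off a finite, $m$-null set.

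The main obstacle is the opening exclusion of $p_2(\zeta_0) = 0$ in (i), together with correctly matching the roots $\gamma_k$ appearing in the proof of Theorem \ref{thm:Ca} to the singularities $\tau_k$ of Definition \ref{def:1} via Lemma \ref{lem:nt}. Once those identifications are secured, the order-of-vanishing comparison in (ii) follows immediately from Lemma \ref{lem:factor}(i) and the fact that $Q$ vanishes at each $\tau_k$, with no additional analysis required.
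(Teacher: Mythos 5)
Your proof is correct and follows essentially the same route as the paper: in part (i) you appeal to the proof of Theorem \ref{thm:Ca} and Lemma \ref{lem:nt} to identify the $\mathbb{T}$-zeros of $\tilde{p}_1 - \alpha p_2$ with the $\tau_k$ satisfying $L_k \subseteq \mathcal{C}_\alpha$, and in part (ii) you compare vanishing orders of $|Q|^2$ and $|\tilde{p}_1 - \alpha p_2|^2$ at each such $\tau_k$, which is exactly the content of the paper's explicit factorizations $(\tilde{p}_1 - \alpha p_2)(z) = q(z)\prod_{k=1}^\ell(z-\tau_k)$ and $|Q(\zeta)|^2 = |r(\zeta)|^2\prod_{k=1}^\ell|\zeta-\tau_k|^2$. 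The only cosmetic difference is your explicit exclusion of $p_2(\zeta_0)=0$, which is sound but already implicit in the citation of Theorem \ref{thm:Ca} and Lemma \ref{lem:nt} (a persistent zero of $\tilde{p}_1-\alpha p_2$ for all $\alpha$ would force $\phi^*$ to take all unimodular values at the same singularity).
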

 \begin{proof} For (i), it follows from the proof of Theorem \ref{thm:Ca} that $\tilde{p}_1(\zeta)-x p_2(\zeta)$ only vanishes at some $(\zeta, x) \in \mathbb{T}^2$ if $x$ is exceptional, $\zeta = \tau_k$ for some $k$, and $L_k \subseteq \mathcal{C}_x$. 
 
For (ii), first observe that if $\alpha$ is generic, then (i) implies $W_\alpha$ is continuous, and hence bounded, on $\mathbb{T}$. If $\alpha$ is exceptional, after reordering the singularities of $\phi$, assume $L_1, \dots, L_\ell$ are exactly the lines in $\mathcal{C}_\alpha$. Then by  the proof of Theorem \ref{thm:Ca},
\[   (\tilde{p}_1-\alpha p_2)(z) = q(z) \prod_{k=1}^\ell (z - \tau_k),\]
for some $q \in \mathbb{C}[z]$ that is non-vanishing on $\mathbb{T}$. Similarly, Theorem \ref{thm:model} and Proposition \ref{prop:QR} imply that for $\zeta \in \mathbb{T}$, 
\[ |p_1(\zeta)|^2 - |p_2(\zeta)|^2 = |Q(\zeta)|^2  = \prod_{k=1}^\ell |\zeta - \tau_k|^2 |r(\zeta)|^2,\]
for some $r \in \mathbb{C}[z]$. Thus, for $\zeta \ne \tau_1, \dots \tau_\ell$, 
\[ W_\alpha (\zeta) = \left|\tfrac{r}{q} (\zeta) \right|^2.\]
This shows that the only possible singularities $W_\alpha$ could have on $\mathbb{T}$ are at $\tau_1, \dots, \tau_\ell$ and any such singularities must be removable. \end{proof}
 
 \subsection{Clark Measure Formulas}  \label{sec:clark1}
 
Recall that the Clark measures associated to $\phi$ are characterized via Theorem \ref{thm:clark}. We split the proof into two propositions; the first considers generic values for $\phi$ and the second considers exceptional values for $\phi$.

\begin{proposition} \label{thm:RIF1n}  Let $\alpha \in \mathbb{T}$ be a generic value for $\phi$.
Then for all $f \in L^1(\sigma_{\alpha})$, 
  \begin{equation} \label{eqn:Clark1} \int_{\mathbb{T}^2} f(\zeta) \ d\sigma_{\alpha}(\zeta) = \int_{\mathbb{T}} f (\zeta, \overline{B_\alpha(\zeta)})  \ d \nu_\alpha(\zeta),
  \end{equation}
  where  $d\nu_\alpha =W_{\alpha} dm$ and $B_\alpha, W_\alpha$ are from Definition \ref{def:1}. \end{proposition}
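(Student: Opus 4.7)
Define the candidate measure $\mu_\alpha$ on $\mathbb{T}^2$ as the pushforward of $W_\alpha\,dm$ on $\mathbb{T}$ under the map $\zeta\mapsto(\zeta,\overline{B_\alpha(\zeta)})$. I would first note that $\mu_\alpha$ is a finite positive Borel measure: by Proposition \ref{prop:QR}(i), $W_\alpha\ge 0$ on $\mathbb{T}$, and, using the proof of Theorem \ref{thm:Ca} together with Lemma \ref{lem:factor}(i), the polynomial $\alpha p_1 - \tilde p_2$ has no zeros on $\overline{\mathbb{D}}$ for generic $\alpha$, so $W_\alpha$ is continuous and bounded on $\mathbb{T}$. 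Since linear combinations of Poisson kernels are dense in $C(\mathbb{T}^2)$, to prove $\mu_\alpha = \sigma_\alpha$ it suffices to verify
\[
\int_{\mathbb{T}} P_z(\zeta,\overline{B_\alpha(\zeta)})\,W_\alpha(\zeta)\,dm(\zeta) \;=\; \frac{1-|\phi(z)|^2}{|\alpha-\phi(z)|^2}
\]
for every $z=(z_1,z_2)\in\mathbb{D}^2$, after which the integral identity for all $f\in L^1(\sigma_\alpha)$ follows from the change-of-variables formula for pushforwards.

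The algebraic heart of the argument consists of three identities: (a) the factorization
\[
(\alpha p - \tilde p)(z_1,z_2) \;=\; \bigl(\alpha p_1(z_1) - \tilde p_2(z_1)\bigr)\bigl(1 - z_2 B_\alpha(z_1)\bigr),
\]
which drops out of the definition of $B_\alpha$; (b) the reflection identity $|\alpha p_1 - \tilde p_2|=|\tilde p_1-\alpha p_2|$ on $\mathbb{T}$, which combined with Proposition \ref{prop:QR}(i) yields $W_\alpha = |Q|^2/|\alpha p_1-\tilde p_2|^2$ on $\mathbb{T}$; and (c) the Agler decomposition \eqref{eqn:AD1} evaluated at $z=w=(\zeta_0,z_2)$ with $\zeta_0\in\mathbb{T}$, which collapses to $|p(\zeta_0,z_2)|^2 - |\tilde p(\zeta_0,z_2)|^2 = (1-|z_2|^2)|Q(\zeta_0)|^2$. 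A quick by-product of (a) is that for generic $\alpha$ and $z_2\in\mathbb{D}$, the unique $\zeta$-root on $\mathbb{T}$ of $\tilde p(\zeta,z_2)=\alpha p(\zeta,z_2)$ occurs precisely when $z_2=\overline{B_\alpha(\zeta)}\in\mathbb{T}$, so $\alpha-\phi(\cdot,z_2)$ is nonvanishing throughout $\overline{\mathbb{D}}$.

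To verify the displayed Poisson identity, I would fix $z_2\in\mathbb{D}$ and view both sides as functions of $z_1\in\mathbb{D}$. Using $|B_\alpha(\zeta)|=1$ on $\mathbb{T}$, the left side equals the $z_1$-Poisson integral of the continuous function $g(\zeta) := (1-|z_2|^2)\,W_\alpha(\zeta)/|1-z_2 B_\alpha(\zeta)|^2$, so it is the bounded harmonic function on $\mathbb{D}$ with boundary data $g$. The right side is harmonic and positive in $z_1$, and by the remarks above extends continuously to $\overline{\mathbb{D}}$, so it too is the Poisson integral of its boundary values. Equality of the two sides therefore reduces to equality of their boundary values on $\mathbb{T}$. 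At $\zeta_0\in\mathbb{T}$, combining (a), (b), (c) shows each boundary value equals
\[
\frac{(1-|z_2|^2)\,|Q(\zeta_0)|^2}{|\alpha p_1(\zeta_0) - \tilde p_2(\zeta_0)|^2 \,|1 - z_2 B_\alpha(\zeta_0)|^2},
\]
which finishes the verification.

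The main obstacle is ruling out boundary singularities in $z_1$ on either side, and this is where the genericity of $\alpha$ is genuinely used: Theorem \ref{thm:Ca} and Lemma \ref{lem:factor}(i) eliminate zeros of $\alpha p_1 - \tilde p_2$ on $\overline{\mathbb{D}}$ and slices $\phi(\zeta,\cdot)$ that reduce identically to $\alpha$, which is what guarantees that both sides are recovered as Poisson integrals from their boundary data. Once these subtleties are cleared, the remainder is just algebraic bookkeeping with the Agler decomposition and the $B_\alpha$-factorization.
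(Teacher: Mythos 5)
Your proposal is correct and follows essentially the same route as the paper's own proof: fix $z_2\in\mathbb{D}$, exploit the fact that for generic $\alpha$ the slice $\alpha-\phi(\cdot,z_2)$ (and hence $\psi^\alpha_{z_2}$) belongs to the disk algebra, write the left and right sides as Poisson integrals in $z_1$, and match the boundary data on $\mathbb{T}$ using the factorization $\alpha p-\tilde p=(\alpha p_1-\tilde p_2)(1-z_2B_\alpha)$ together with the sums-of-squares identity $|p(\zeta,z_2)|^2-|\tilde p(\zeta,z_2)|^2=(1-|z_2|^2)|Q(\zeta)|^2$. The only cosmetic difference is the final $L^1$ extension: you package it as the change-of-variables identity for a pushforward measure, whereas the paper proves the equivalent statement by hand in Lemma \ref{lem:extension}; both are sound and, in fact, the paper reuses that lemma later in the exceptional case, so its more pedestrian formulation earns its keep there.
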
 
  
  \begin{proof}  For each fixed $z_2 \in \mathbb{D}$, define the function
\[ \psi^\alpha_{z_2}(z_1):= \frac{\alpha + \phi(z_1,z_2)}{ \alpha - \phi (z_1, z_2)} \text{ for } z_1 \in \mathbb{D}.\]
Recall that $\phi$ has no singularities on $\mathbb{T} \times \mathbb{D}$. Furthermore, since $\alpha$ is generic, if $\phi(\zeta_1, z_2) = \alpha$ for $\zeta_1 \in \mathbb{T}$,  then $z_2 = 1/B_{\alpha}(\zeta_1) \in \mathbb{T}$. 
Thus, $\psi^\alpha_{z_2} \in A(\mathbb{D})$ and for all $z_1 \in \mathbb{D}$,
\[  \Re\left(\psi^{\alpha}_{z_2}(z_1)\right)  = 
\int_{\mathbb{T}} \frac{ 1-|z_1|^2}{|z_1 - \zeta|^2} \Re\left(\psi^\alpha_{z_2}(\zeta)\right) d m(\zeta).  \]
If $z_1 \in \mathbb{D}$ and $\zeta \in \mathbb{T}$, then the computation in the proof of Proposition \ref{prop:QR}(i) gives
\[ |p(\zeta, z_2)|^2 - |\tilde{p}(\zeta, z_2)|^2 =(1-|z_2|^2) \left( |p_1(\zeta)|^2 - |p_2(\zeta)|^2\right),\]
which one can use to obtain
    \[
    \begin{aligned}
  \Re\left(\psi^{\alpha}_{z_2}(\zeta) \right) = \frac{ 1-|\phi(\zeta, z_2)|^2}{|\alpha - \phi(\zeta, z_2)|^2} 
    = \frac{ |p(\zeta, z_2)|^2-|\tilde{p}(\zeta, z_2)|^2}{|\alpha p(\zeta, z_2) -\tilde{p}(\zeta, z_2) |^2} 
    =  \frac{1-|z_2|^2}{ | z_2 - \overline{B_{\alpha}(\zeta)}|^2}  \frac{ |p_1(\zeta)|^2 - |p_2(\zeta)|^2}{ |\alpha p_2(\zeta) -\tilde{p}_1(\zeta)|^2} .
    \end{aligned} 
    \]
  This  implies that 
\[
\begin{aligned}
 \Re\left(\psi^{\alpha}_{z_2}(z_1)\right) 
& = 
 \int_{\mathbb{T}} \frac{ 1-|z_1|^2}{|z_1 - \zeta|^2} \frac{1-|z_2|^2}{| z_2 - \overline{B_{\alpha}(\zeta)}|^2}
   \frac{ |p_1(\zeta)|^2 - |p_2(\zeta)|^2}{ |\alpha p_2(\zeta) -\tilde{p}_1(\zeta)|^2} d m(\zeta) \\
   &= \int_{\mathbb{T}} P_z(\zeta, \overline{B_{\alpha}(\zeta)})  W_\alpha(\zeta) d m(\zeta).
   \end{aligned}
   \]
     Thus, by the definition of $\sigma_\alpha$, we can conclude that  
   \[ \int_{\mathbb{T}^2} P_z(\zeta)  d\sigma_{\alpha}(\zeta) = \int_{\mathbb{T}} P_z(\zeta, \overline{B_{\alpha}(\zeta)}) d \nu_\alpha (\zeta).\]
Since finite linear combinations of Poisson functions $P_z$ are dense in $C(\mathbb{T}^2)$, this formula extends to all functions in  $C(\mathbb{T}^2)$. Since $W_\alpha$ is bounded by Proposition \ref{lem:Wbd}, the formula extends to $f\in L^1(\sigma_\alpha)$ by  Lemma \ref{lem:extension}.\end{proof}
       
Let us now consider the exceptional $\alpha$ values for $\phi$. 
       
 \begin{proposition} \label{prop:eclark} Let $\alpha \in \mathbb{T}$ be an exceptional value for $\phi$ and (after re-ordering the zeros of $p$ on $\mathbb{T}^2$ if necessary and applying Theorem \ref{thm:Ca}), assume $\mathcal{C}_\alpha = E_\alpha \cup (\cup_{k=1}^\ell L_k)$. 
Then for all $f \in L^1(\sigma_{\alpha})$, 
  \begin{equation} \label{eqn:Clark2} \int_{\mathbb{T}^2} f(\zeta) d\sigma_{\alpha}(\zeta) = \int_{\mathbb{T}} f(\zeta,\overline{B_\alpha(\zeta)})  d \nu_\alpha(\zeta) + \sum_{k=1}^\ell c^\alpha_k \int_{\mathbb{T}} f(\tau_k, \zeta) \ dm(\zeta),
  \end{equation}
 where  $d\nu_\alpha =W_{\alpha} dm$, $B_\alpha, W_\alpha$ are from Definition \ref{def:1}, and $c^\alpha_1, \dots, c^\alpha_\ell > 0$. \end{proposition}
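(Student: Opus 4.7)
The plan is to adapt the one-variable Herglotz argument from Proposition~\ref{thm:RIF1n}, the new ingredient being a contribution from boundary poles of $\psi^\alpha_{z_2}(z_1) := (\alpha+\phi(z_1,z_2))/(\alpha-\phi(z_1,z_2))$. Fix $z_2\in\mathbb{D}$. Because $p$ does not vanish on $(\mathbb{T}\times\mathbb{D})\cup\mathbb{D}^2$, the function $\phi(\cdot,z_2)$ is analytic in a neighborhood of $\overline{\mathbb{D}}$. The identity $|p(\zeta,z_2)|^2-|\tilde p(\zeta,z_2)|^2 = (1-|z_2|^2)(|p_1(\zeta)|^2-|p_2(\zeta)|^2)$ on $\mathbb{T}\times\mathbb{D}$ together with Proposition~\ref{prop:QR}(i) shows that on $\overline{\mathbb{D}}$ the equation $\phi(z_1,z_2)=\alpha$ can hold only at $z_1\in\{\tau_1,\dots,\tau_m\}$, and at such a $\tau_k$ it holds precisely when $\phi^*(\tau_k,\lambda_k)=\alpha$, i.e.\ for $k=1,\dots,\ell$. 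Hence $\psi^\alpha_{z_2}$ extends meromorphically across $\overline{\mathbb{D}}$ with simple poles exactly at $\tau_1,\dots,\tau_\ell$; simplicity and the residue formula $A_k=-2\alpha/C_k$ at $\tau_k$, where $C_k:=\frac{\partial\phi}{\partial z_1}(\tau_k,z_2)$, follow from Lemma~\ref{lem:factor}(iii), which crucially ensures $C_k$ is a nonzero constant \emph{independent of} $z_2$.

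Now apply the Herglotz representation to the positive harmonic function $\Re\psi^\alpha_{z_2}$: there is a positive finite Borel measure $\mu_{z_2}$ on $\mathbb{T}$ with
\[
\Re(\psi^\alpha_{z_2}(z_1)) = \int_{\mathbb{T}} \frac{1-|z_1|^2}{|z_1-\zeta|^2}\, d\mu_{z_2}(\zeta).
\]
Simplicity of the boundary poles forces the singular part of $\mu_{z_2}$ to be a sum of point masses at $\tau_1,\dots,\tau_\ell$. Matching the Laurent tail of $\psi^\alpha_{z_2}$ at $\tau_k$ against that of the model Herglotz kernel $(\tau_k+z_1)/(\tau_k-z_1)$ gives point-mass weight $c_k^\alpha := -A_k/(2\tau_k) = \alpha/(\tau_k C_k)$, independent of $z_2$ and strictly positive (nonzero since $C_k\neq 0$, and positive because $\mu_{z_2}$ itself is positive). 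The absolutely continuous density is $\Re(\psi^\alpha_{z_2}(\zeta))$ for $m$-a.e.\ $\zeta$, and the same algebraic manipulation used in the generic proof applies on $\mathbb{T}\setminus\{\tau_1,\dots,\tau_\ell\}$: the common factor $\prod_k|\zeta-\tau_k|^2$ shared by $|\tilde p_1-\alpha p_2|^2$ and $|Q(\zeta)|^2=|p_1(\zeta)|^2-|p_2(\zeta)|^2$ cancels, leaving the density $(1-|z_2|^2)W_\alpha(\zeta)/|z_2-\overline{B_\alpha(\zeta)}|^2$, which is bounded $m$-a.e.\ by Lemma~\ref{lem:Wbd}.

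Substituting these expressions into the Herglotz formula and using $\frac{1-|z_1|^2}{|z_1-\tau_k|^2}=\int_{\mathbb{T}} P_z(\tau_k,\zeta)\,dm(\zeta)$ to rewrite each boundary point-mass contribution as a two-variable Poisson integral of $c_k^\alpha m$ on $L_k$, Doubtsov's defining identity $\int_{\mathbb{T}^2} P_z\,d\sigma_\alpha = \Re(\psi^\alpha_{z_2}(z_1))$ yields \eqref{eqn:Clark2} for every $f$ of the form $P_z$. Density of the linear span of Poisson kernels in $C(\mathbb{T}^2)$, together with Lemma~\ref{lem:extension} applied using the boundedness of $W_\alpha$ and the finiteness of the line contributions $c_k^\alpha m$, extends the identity to all $f\in L^1(\sigma_\alpha)$, exactly as in the generic case. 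The principal obstacle is the residue and point-mass bookkeeping at the boundary poles; in particular, Lemma~\ref{lem:factor}(iii) must be invoked to guarantee that the weights $c_k^\alpha$ are independent of the auxiliary variable $z_2$, as otherwise the right-hand side of \eqref{eqn:Clark2} would not describe a single Borel measure on $\mathbb{T}^2$.
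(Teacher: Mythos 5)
Your proposal is correct and takes a genuinely different route from the paper. The paper splits the argument in two: for the absolutely continuous part it passes from generic $\alpha_n \to \alpha$ via the weak-$\star$ convergence $\sigma_{\alpha_n} \to \sigma_\alpha$ (Corollary 2.2 in \cite{D19}), cutting off near the $\tau_k$ with bump functions $\Psi_\epsilon$ and exploiting uniform continuity of $(x,\zeta)\mapsto W_x(\zeta)$ away from $\{\tau_k\}$; for the atomic part it multiplies Doubtsov's identity by $(1-r)$, lets $r \nearrow 1$, uses dominated convergence on the left and Carath\'eodory's theorem on the right. You instead keep $z_2 \in \mathbb{D}$ fixed, observe that $\psi^\alpha_{z_2}$ is a rational Herglotz function with simple poles precisely at $\tau_1,\dots,\tau_\ell$, and read off both the absolutely continuous density $\Re\psi^\alpha_{z_2}$ and the atomic weights $c_k^\alpha = -A_k/(2\tau_k)=1/|C_k|$ from a single one-variable Herglotz decomposition, the residues being $A_k = -2\alpha/C_k$. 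This unifies the two parts, avoids the weak-$\star$ limit and the cutoff machinery entirely, and is arguably cleaner; what it trades for that is reliance on the standard description of the Clark/Herglotz measure of a function meromorphic across $\mathbb{T}$, whereas the paper re-derives the atom masses from scratch via Carath\'eodory's angular-derivative theorem. Both routes rest essentially on Lemma~\ref{lem:factor}(iii) (to make $C_k$, hence $c_k^\alpha$, independent of $z_2$) and on the algebraic identity producing $W_\alpha$. Two small points worth tightening in your write-up: (1) the simplicity of the boundary poles is automatic for a Herglotz function and is not really what forces the singular part to be atomic --- that comes from the analytic continuation across $\mathbb{T}\setminus\{\tau_1,\dots,\tau_\ell\}$, which leaves the singular part no support beyond finitely many points; and (2) $-A_k/(2\tau_k)$ is a priori complex, so one should either appeal to the Julia--Carath\'eodory inequality $\tau_k C_k \bar\alpha > 0$ (which gives $-A_k/(2\tau_k)=1/|C_k|$ directly) or compute the atom mass as $\lim_{r\nearrow 1}\tfrac{1-r}{2}\Re\psi^\alpha_{z_2}(r\tau_k) = -\tfrac{1}{2}\Re(A_k\bar\tau_k)$, which is manifestly real. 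Finally, as in the paper, the passage from Poisson kernels to $L^1(\sigma_\alpha)$ requires decomposing $\sigma_\alpha$ into its restrictions to $E_\alpha$ and the $L_k$ before invoking Lemma~\ref{lem:extension}; ``exactly as in the generic case'' undersells this step slightly since the generic lemma is stated for a single curve, but the adaptation is routine.
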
      
       
\begin{proof} Recall that $\sigma_\alpha$ is supported on $\mathcal{C}_\alpha = E_\alpha \cup (\cup_{k=1}^\ell L_k)$. Since the $L_k$ are disjoint and $E_\alpha \cap L_k =\{ ( \tau_k, \overline{B_\alpha(\tau_k)})\}$, which has $\sigma_\alpha$ measure $0$, we only need to show 
\begin{align}  \label{eqn:sigma1}
\int_{\mathbb{T}^2} f(\zeta) \chi_{E_\alpha}(\zeta) d\sigma_\alpha(\zeta) &=\int_\mathbb{T} f(\zeta, \overline{B_\alpha(\zeta)}) W_\alpha(\zeta) dm(\zeta) \\
\label{eqn:sigma2}
\int_{\mathbb{T}^2} f(\zeta) \chi_{L_k}(\zeta) d\sigma_\alpha(\zeta) &= c^\alpha_k \int_\mathbb{T} f(\tau_k, \zeta) dm(\zeta) \end{align}
for $f \in L^1(\sigma_\alpha)$ and $k=1, \dots, \ell$, where $\chi_E$ denotes the characteristic function of a set $E \subseteq \mathbb{T}^2.$ Then Lemma \ref{lem:Wbd} implies $W_\alpha$ is bounded on $\mathbb{T}$ (with at most $\ell$ removable singularities) and by Lemma \ref{lem:extension}, we need only establish \eqref{eqn:sigma1} and \eqref{eqn:sigma2} for $f \in C(\mathbb{T}^2)$. 

To ease notation, throughout this proof, for any $F$ defined on $\mathbb{T}^2$, we will use $F_\alpha$ to denote $F_\alpha(\zeta) := F(\zeta, \overline{B_\alpha(\zeta)})$ and $F_{k}$ to denote $F_{k}(\zeta):=F(\tau_k,\zeta).$ \\

\noindent \textbf{Part 1.} We first prove \eqref{eqn:sigma1}. To that end, fix a small $\epsilon >0$ and define 
\[S_\epsilon = \{ \zeta \in \mathbb{T}: \min_k| \zeta -\tau_k| < \epsilon\}\]
 and define $S_{\epsilon/2}$ analogously. By Lemma \ref{lem:Wbd} and the definition of $B_\alpha$, we can find a small arc $A_{\alpha} \subseteq \mathbb{T}$ centered at $\alpha$ such that both $W(x,\zeta)$ and $B(x,\zeta):=B_x(\zeta)$ are uniformly continuous on $A_\alpha \times (\mathbb{T} \setminus S_{\epsilon/2}).$ Choose $(\alpha_n) \subseteq \mathbb{T}$ such that each $\alpha_n$ is generic and $(\alpha_n) \rightarrow \alpha$. Then by \cite[Corollary 2.2]{D19}, $(\sigma_{\alpha_n})$ converges weak-$\star$ to $\sigma_\alpha$. 

To exploit that fact, let $\Psi_\epsilon$ be a continuous function on $\mathbb{T}$ such that 
\[ \Psi_\epsilon \equiv 1 \text{ on } \mathbb{T} \setminus S_\epsilon, \ \ \Psi_\epsilon \equiv 0 \text{ on } S_{\epsilon/2}, \ \ 0 \le \Psi_\epsilon \le 1 \text{ on } S_\epsilon \setminus S_{\epsilon/2}.\]
Fix $f \in C(\mathbb{T}^2)$. By our assumptions and by Proposition \ref{thm:RIF1n}, 
\begin{align} 
\int_{\mathbb{T}^2} f(\zeta) \Psi_\epsilon(\zeta_1) d\sigma_\alpha(\zeta) &= \lim_{n\rightarrow \infty} 
\int_{\mathbb{T}^2} f(\zeta) \Psi_\epsilon(\zeta_1) d\sigma_{\alpha_n}(\zeta) \nonumber \\
& =\lim_{n\rightarrow \infty} \int_{\mathbb{T}} f_{\alpha_n}(\zeta) \  \Psi_\epsilon(\zeta) \ d\nu_{\alpha_n}(\zeta)
 = \int_{\mathbb{T}} f_\alpha(\zeta) \  \Psi_\epsilon(\zeta) \  d\nu_\alpha(\zeta), \label{eqn:sigma3}
\end{align}
where the last equality follows because 
\[ 
 \int_{\mathbb{T}}\left|  f_{\alpha_n}(\zeta)  W_{\alpha_n}(\zeta) -f_\alpha(\zeta) W_{\alpha}(\zeta) \right | \Psi_\epsilon(\zeta) dm(\zeta) 
\le \sup_{\zeta \in \mathbb{T} \setminus S_{\epsilon/2}} \left|  \left(f_{\alpha_n}  W_{\alpha_n} -f_\alpha  W_{\alpha}\right)(\zeta) \right | \rightarrow 0,
  \]
 as $n\rightarrow \infty$ because $f_x(\zeta)W(x, \zeta)$ is uniformly continuous on $A_\alpha \times (\mathbb{T } \setminus S_{\epsilon/2}).$ Furthermore, observe that since $\Psi_\epsilon(\zeta_1)\equiv 0$ on each $L_k$, we have
\[ \begin{aligned}  \left| \int_{\mathbb{T}^2} f(\zeta) \chi_{E_\alpha}(\zeta) d \sigma_\alpha(\zeta) -  \int_{\mathbb{T}^2} f(\zeta) \Psi_\epsilon(\zeta_1) d \sigma_\alpha(\zeta) \right| &\le    \int_{\mathbb{T}^2} |f(\zeta)|(1-\Psi_{\epsilon}(\zeta_1)) \chi_{E_\alpha}(\zeta) d \sigma_\alpha(\zeta) \\
&\le  \| f \|_{L^{\infty}(\mathbb{T}^2)} \sigma_\alpha( (S_\epsilon \times \mathbb{T}) \cap E_\alpha).
\end{aligned}
\]
Here, $(S_\epsilon \times \mathbb{T}) \cap E_\alpha$ is the intersection of the curve $E_\alpha$ with thin strips in $\mathbb{T}^2$, see Figure \ref{fig:support}.
\begin{figure} 
\includegraphics[width=0.45 \textwidth]{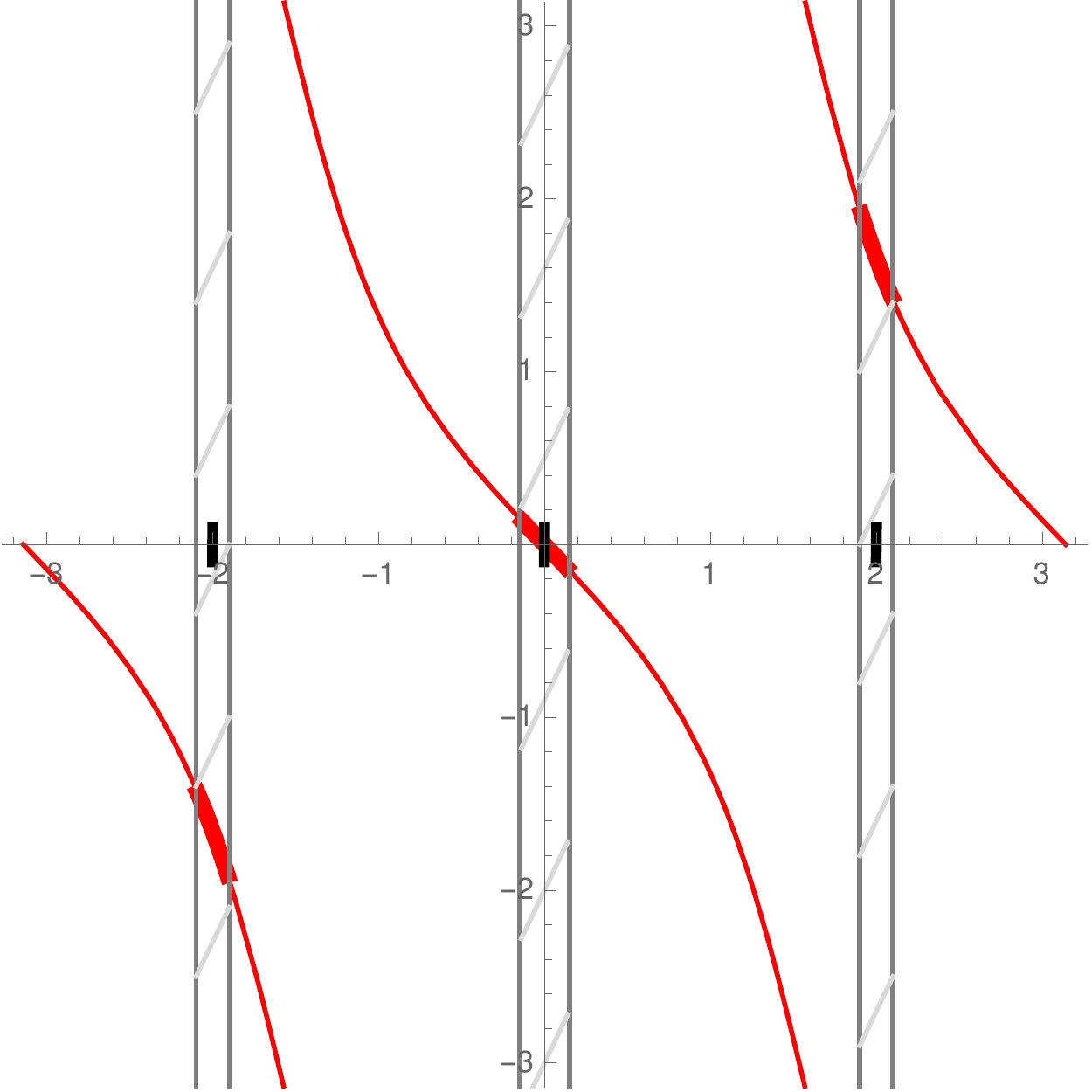}
\caption{\textsl{This shows the set $(S_\epsilon \times \mathbb{T}) \cap E_\alpha$ graphed on $[-\pi,\pi]^2$. $E_\alpha$ is the red curve, $\tau_1, \tau_2, \tau_3$ are the black tics on the horizontal axis, and $S_\epsilon \times \mathbb{T}$ is the union of thin gray strips.}}
\label{fig:support}
\end{figure}
Letting $\epsilon \searrow 0$ and using the dominated convergence theorem gives
\[ \lim_{\epsilon \searrow 0}  \sigma_\alpha( (S_\epsilon \times \mathbb{T}) \cap E_\alpha) = \sigma_\alpha\left( \cup _{k=1}^\ell (\tau_k, \overline{B_\alpha(\tau_k)})\right) =0.\]
This in turn implies that 
\[   \int_{\mathbb{T}^2} f(\zeta) \chi_{E_\alpha}(\zeta) d \sigma_\alpha(\zeta) =\lim_{\epsilon \searrow 0}  \int_{\mathbb{T}^2} f(\zeta) \Psi_\epsilon(\zeta_1) d \sigma_\alpha(\zeta).\] 
As $f_\alpha$ and $W_\alpha$ are both bounded, we can also conclude that 
\[ \lim_{\epsilon \searrow 0}  \int_{\mathbb{T}} f_\alpha(\zeta) \Psi_\epsilon(\zeta) \ d\nu_\alpha(\zeta) = \int_{\mathbb{T}}  f_\alpha(\zeta) \ d \nu_\alpha(\zeta).\]
Combining these last two equalities with \eqref{eqn:sigma3} yields \eqref{eqn:sigma1}.\\

\noindent \textbf{Part 2.} Now we prove \eqref{eqn:sigma2}. We will show that for $z\in \mathbb{D}^2$, \eqref{eqn:sigma2} holds for  $P_z$. Since linear combinations of these are dense in $C(\mathbb{T}^2)$, the result will follow.  
To that end, fix 
 $0<r<1$. Then the definition of $\sigma_\alpha$ gives
\begin{equation} \label{eqn:sigma4}\int_{\mathbb{T}^2} P_{(r \tau_k,z_2)}(\zeta) d\sigma_\alpha(\zeta) =  \Re \left( \frac{\alpha + \phi(r \tau_k,z_2)}{\alpha-\phi(r \tau_k,z_2)}\right). \end{equation}
We will multiply both sides by $(1-r)$ and let $r\nearrow 1$. First, observe that for $\zeta \in \mathbb{T}^2$,
\[ \lim_{r\nearrow 1} (1-r) P_{(r\tau_k,z_2)}(\zeta) = \left\{\begin{array}{cc} 0 & \text{ if } \zeta_1 \ne \tau_k,\\
 2 P_{z_2}(\zeta_2) & \text{ if } \zeta_1 = \tau_k. \end{array} \right.\]
 Then by the dominated convergence theorem,
 \[ \lim_{r\nearrow 1} \int_{\mathbb{T}^2} (1-r)  P_{(r \tau_k,z_2)}(\zeta) d\sigma_\alpha(\zeta) = \int_{\mathbb{T}^2} 2 P_{z_2} (\zeta_2) \chi_{L_k} (\zeta) d\sigma_{\alpha}(\zeta).\]
 Observe that $L_k \subseteq \mathcal{C}_\alpha$ actually implies that $\phi(\tau_k, z_2) = \alpha$ for all $z_2 \in \mathbb{D}$. Furthermore, since $\phi$ is analytic at each $(\tau_k,z_2)$, we have
 \[ \lim_{z_1 \rightarrow \tau_k} \phi(z_1, z_2) =\alpha \text{ and } \lim_{z_1 \rightarrow \tau_k} \frac{ \phi(z_1, z_2)-\alpha}{z_1 -\tau_k} = \tfrac{\partial \phi}{\partial z_1}(\tau_k , z_2) :=C \ne 0,\] 
 by Lemma \ref{lem:factor}. Then Carath\'eodory's theorem, see (VI-3) in \cite{Sar94}, implies 
 \[ \lim_{r \nearrow 1} \frac{ 1-|\phi(r \tau_k,z_2)|}{1-r} = C \tau_k \bar{\alpha} = |C|\] 
 and so
 \[
 \begin{aligned}
\lim_{r\nearrow 1} \Re\left( \frac{(1-r) (\alpha +\phi(r \tau_k, z_2))}{ \alpha-\phi(r \tau_k, z_2)} \right) &=
\lim_{r\nearrow 1} 
(1-r) \frac{ 1-|\phi(r\tau_k,z_2)|^2}{|\alpha - \phi(r\tau_k,z_2)|^2}  \\
& =\lim_{r\nearrow 1} 2 \left | \frac{\tau_k-r\tau_k}{\alpha - \phi(r \tau_k,z_2)}\right|^2  \frac{ 1-|\phi(r \tau_k,z_2)|}{1-r}  = \frac{2}{|C|}.
\end{aligned}
\]
Now set
\begin{equation} \label{eqn:formc} c^\alpha_k=\frac{1}{|C|} = \frac{1}{| \tfrac{\partial \phi}{\partial z_1}(\tau_k , z_2)|} >0.\end{equation}
 Then \eqref{eqn:sigma4} and our subsequent computations combine to give 
 \[   \int_{\mathbb{T}^2} P_{z_2} (\zeta_2) \chi_{L_k} (\zeta) d\sigma_{\alpha}(\zeta) = c^\alpha_k =c^\alpha_k \int_{\mathbb{T}} P_{z_2}(\zeta) dm(\zeta).\]
 Multiplying both sides by $P_{z_1}(\tau_k)$ establishes \eqref{eqn:sigma2} for $f=P_z$ and completes the proof.
\end{proof}              
 The following lemma, which was used in the above proof, is a consequence of standard measure-theory facts. We include its proof here for the ease of the reader. 
 
 \begin{lemma}
 \label{lem:extension}
 Let $\sigma$ be a Borel measure on $\mathbb{T}^2$ and let $\zeta_2 = g(\zeta_1)$ be a continuous curve in $\mathbb{T}^2$. If $W$ is a continuous function defined on $\mathbb{T}$ such that 
 \begin{equation} \label{eqn:sigma} \int_{\mathbb{T}^2} f(\zeta) d\sigma(\zeta) = \int_\mathbb{T} f(\zeta, g(\zeta)) W(\zeta) dm(\zeta) \ \text{ for all } f\in C(\mathbb{T}^2),\end{equation}
 then \eqref{eqn:sigma} holds for all $f \in L^1(\sigma).$
 \end{lemma}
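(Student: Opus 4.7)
The plan is to interpret the right-hand side of \eqref{eqn:sigma} as integration of $f$ against a Borel measure on $\mathbb{T}^2$ obtained by pushing $W\,dm$ forward along the curve $\zeta \mapsto (\zeta, g(\zeta))$, and then to invoke uniqueness in the Riesz--Markov--Kakutani representation theorem. This converts the hypothesis ``$\sigma$ and $\Phi_{\ast}(W\,dm)$ agree on continuous test functions'' into the statement that these two Borel measures are equal, at which point the extension of \eqref{eqn:sigma} to $L^1(\sigma)$ is automatic.

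Concretely, I would first introduce the continuous map $\Phi \colon \mathbb{T} \to \mathbb{T}^2$ defined by $\Phi(\zeta) := (\zeta, g(\zeta))$. Since $W$ is continuous on the compact set $\mathbb{T}$, it is bounded, and $d\mu := W\,dm$ is a finite Borel measure on $\mathbb{T}$ (treated as signed if $W$ changes sign, or positive in the applications in the paper). Then $\nu := \Phi_{\ast}\mu$ is a well-defined finite Borel measure on $\mathbb{T}^2$ because $\Phi$ is continuous, so $\nu(E) := \mu(\Phi^{-1}(E))$ makes sense for every Borel $E \subseteq \mathbb{T}^2$. The standard change-of-variables formula for pushforward measures then yields
\[ \int_{\mathbb{T}^2} f\,d\nu \;=\; \int_\mathbb{T} f(\zeta, g(\zeta))\,W(\zeta)\,dm(\zeta) \]
for every bounded Borel function $f$, and in particular for every $f \in C(\mathbb{T}^2)$.

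The hypothesis \eqref{eqn:sigma} then says precisely that $\int f\,d\sigma = \int f\,d\nu$ for all $f \in C(\mathbb{T}^2)$. Since $\sigma$ and $\nu$ are finite Borel (hence Radon) measures on the compact Hausdorff space $\mathbb{T}^2$, uniqueness in the Riesz--Markov--Kakutani representation theorem forces $\sigma = \nu$ as Borel measures on $\mathbb{T}^2$. Consequently $L^1(\sigma) = L^1(\nu)$, and the pushforward change-of-variables formula extends from bounded continuous functions to all $f \in L^1(\nu)$ by the usual simple function / monotone convergence argument, establishing \eqref{eqn:sigma} for every $f \in L^1(\sigma)$.

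I do not foresee any serious obstacle; the statement is a clean consequence of the Riesz representation theorem. The only point that requires a moment's care is verifying that $\nu = \Phi_{\ast}\mu$ is genuinely a Borel measure on $\mathbb{T}^2$ (as opposed to just a positive functional on $C(\mathbb{T}^2)$), but this follows immediately from the continuity, and hence Borel-measurability, of $\Phi$.
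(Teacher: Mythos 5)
Your proof is correct, but it takes a genuinely different and more streamlined route than the paper's. Where the paper verifies identity \eqref{eqn:sigma} by hand in three stages (open sets via Urysohn and dominated convergence, then Borel sets via inner/outer regularity, then $L^1(\sigma)$ via Cauchy sequences), you instead recognize the right-hand side as integration against the pushforward $\nu := \Phi_*(W\,dm)$ with $\Phi(\zeta) = (\zeta, g(\zeta))$, note that the hypothesis together with Riesz uniqueness on the compact metric space $\mathbb{T}^2$ forces $\sigma = \nu$, and then invoke the standard change-of-variables theorem for pushforward measures. This is cleaner in at least one concrete respect: the paper's proof takes a small detour through analytic sets and Lebesgue measurability of the set $E_g$ (defined as a projection), but in fact $E_g = \Phi^{-1}(E)$, which is Borel whenever $E$ is, precisely because $\Phi$ is continuous --- so your framing exposes structure that the paper's presentation partially obscures. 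The paper's approach has the pedagogical advantage of being elementary and entirely self-contained, not relying explicitly on Riesz or on pushforward machinery. The one place I would tighten your write-up is the final sentence: rather than ``extends from bounded continuous functions to all $f\in L^1(\nu)$ by the usual simple function / monotone convergence argument,'' state the change-of-variables formula directly --- for every Borel $f\colon\mathbb{T}^2\to[0,\infty]$ one has $\int_{\mathbb{T}^2} f\,d(\Phi_*\mu) = \int_\mathbb{T} (f\circ\Phi)\,d\mu$, and the $L^1$ case follows by splitting into positive and negative (or real and imaginary) parts, together with the observation that any $f\in L^1(\nu)$ agrees $\nu$-a.e.\ with a Borel function, and the corresponding null set pulls back under $\Phi$ to a $\mu$-null set.
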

     
\begin{proof} For ease of notation, set $d \nu = W d m$. Then $\nu$ is a Borel measure on $\mathbb{T}$. Furthermore, if $E\subseteq \mathbb{T}^2$ is a Borel set, then 
\begin{equation}\label{eqn:Eg} E_g :=  \{ \zeta_1\in \mathbb{T} : \text{there exists } \zeta_2 \in \mathbb{T} \text{ with } (\zeta_1, \zeta_2) \in E \cap \{ (\zeta, g(\zeta)) : \zeta \in \mathbb{T}\} \}\end{equation}
is the projection of a Borel set in $\mathbb{T}^2$ onto its first coordinate.  This implies that $E_g$ is an analytic set and its characteristic function is Lesbesgue measurable and hence, $\nu$-measurable, see for example Chapter 13 in \cite{Dudley}. We will use $E_g$ frequently because $\chi_E(\zeta, g(\zeta)) = \chi_{E_g}(\zeta)$ for $\zeta \in \mathbb{T}$. 
The following proof has three steps. \\

\noindent \textbf{Step 1:} Establish \eqref{eqn:sigma} for $f=\chi_{U}$,  where $U$ is an arbitrary open set in $\mathbb{T}^2$. Let $\{K_n\}_{n=1}^{\infty}$ be a sequence of nested compact sets with $U=\bigcup_{n}K_n$. Then, by Urysohn's lemma, there exists a sequence $\{f_n\}_{n=1}^{\infty}$ of continuous functions on $\mathbb{T}^2$ having $0\leq f_n\leq 1$ on $\mathbb{T}^2$ and
  \[f_n=1 \quad \textrm{on}\quad K_n \quad \textrm{and} \quad f_n=0 \quad \textrm{on}\quad U^{c}.\]
 Then $\lim_{n\to\infty}f_n(\zeta)=\chi_{U}(\zeta)$ for every $\zeta \in \mathbb{T}^2$, and since $\nu$ is a finite measure, we can apply the dominated convergence theorem to obtain
 \[
 \sigma(U)=\lim_{n\to \infty}\int_{\mathbb{T}^2}f_n(\zeta)d\sigma(\zeta)\\=\lim_{n\to \infty}\int_{\mathbb{T}}f_n(\zeta,g(\zeta))d\nu(\zeta)
 =\int_{\mathbb{T}}\chi_{U}(\zeta,g(\zeta))d\nu(\zeta),
 \]
as desired.\\

\noindent \textbf{Step 2:} Establish \eqref{eqn:sigma} for $f=\chi_{E}$, where $E$ is an arbitrary Borel set in $\mathbb{T}^2$. Since $\sigma$ is a finite Borel measure on $\mathbb{T}^2$, and hence is Radon, for each $n\in \mathbb{N}$ there exists a compact set $K_n$ and an open set $U_n$ such that $K_n \subseteq E \subseteq U_n$ and 
$\sigma(U_n\setminus K_n)<1/n$. 
Urysohn's lemma again guarantees the existence of a sequence $\{f_n\}_{n=1}^{\infty}$ of continuous functions with $0\leq f_n\leq 1$, 
$f_n=1$ on $K_n$, and $f_n=0$ on $U_n^c$. Since
\[\int_{\mathbb{T}^2}|(f_n-\chi_E)(\zeta)| \ d\sigma(\zeta)  \leq \int_{\mathbb{T}^2} \chi_{U_n\setminus K_n}(\zeta) d\sigma(\zeta) =\sigma(U_n\setminus K_n)<\tfrac{1}{n},\]
we have $\| f-f_n\|_{L^1(\sigma)} < 1/n$ and $f_n\to f$ in $L^1(\sigma)$ as $n\to \infty$. Since the $f_n$ are continuous, this implies
\[\int_{\mathbb{T}^2}\chi_E(\zeta)d\sigma (\zeta)=\lim_{n\to \infty}\int_{\mathbb{T}^2}f_n(\zeta) d\sigma(\zeta) =\lim_{n\to \infty}\int_{\mathbb{T}}f_n(\zeta, g(\zeta))d\nu(\zeta).\]
Because  $\chi_{K_n}\leq f_n \leq \chi_{U_n}$ on $\mathbb{T}^2$, we then obtain
\begin{equation}
\int_{\mathbb{T}}\chi_{K_n}(\zeta,g(\zeta))d\nu(\zeta)\leq \int_{\mathbb{T}^2}f_n(\zeta) d\sigma(\zeta) \leq \int_{\mathbb{T}}\chi_{U_n}(\zeta, g(\zeta))d\nu(\zeta)
\label{eqn:sandwich1}
\end{equation}
and as $\chi_{K_n}\leq \chi_E \leq \chi_{U_n}$ on $\mathbb{T}^2$,
\begin{equation}
\int_{\mathbb{T}}\chi_{K_n}(\zeta, g(\zeta))d\nu(\zeta) \leq \int_{\mathbb{T}}\chi_E(\zeta, g(\zeta))d\nu(\zeta) \leq \int_{\mathbb{T}}\chi_{U_n}(\zeta, g(\zeta))d\nu(\zeta).\label{eqn:sandwich2}
\end{equation}
Combining \eqref{eqn:sandwich1} and \eqref{eqn:sandwich2} gives
\[
\begin{aligned}
\left|\int_{\mathbb{T}^2}\chi_{E}(\zeta) d\sigma(\zeta) -\int_{\mathbb{T}}\chi_E(\zeta, g(\zeta))d\nu(\zeta)\right|&\leq  \| \chi_{E} - f_n \|_{L^1(\sigma)}   \\
& +\left |  \int_{\mathbb{T}^2}f_n(\zeta) d\sigma(\zeta) - \int_{\mathbb{T}}\chi_E(\zeta, g(\zeta))d\nu(\zeta)  \right | \\
& \le  \tfrac{1}{n} + \int_{\mathbb{T}}(\chi_{U_n}-\chi_{K_n})(\zeta, g(\zeta)) d\nu(\zeta) \\&= \tfrac{1}{n} + \sigma(U_n\setminus K_n) < \tfrac{2}{n}
\end{aligned}
\]
for all $n$, where we used Step $1$ applied to $U_n\setminus K_n=U_n\cap K_n^c$. Letting $n\rightarrow \infty$ gives \eqref{eqn:sigma} for $f = \chi_E$. \\

\noindent \textbf{Step 3:} Establish \eqref{eqn:sigma} for a general $f\in L^1(\sigma)$. Pick a sequence $\{f_n\}_{n=1}^{\infty}$ of continuous functions on $\mathbb{T}^2$ such that $f_n\to f$ in $L^1(\sigma)$ and pointwise $\sigma$-almost everywhere on $\mathbb{T}^2$. Then there exists some Borel set $E\subset \mathbb{T}^2$ with $\sigma(E)=0$ such that if $f_n(\zeta)\nrightarrow f(\zeta)$, $n\to \infty$, then $\zeta \in E$. Then, if $f_n(\zeta, g(\zeta))\nrightarrow f(\zeta, g(\zeta))$ then $\zeta \in E_g$, where $E_g$ is defined in \eqref{eqn:Eg}. By Step 2, $\nu(E_g)=\sigma(E)=0$. Hence 
\begin{equation}
f_n(\zeta, g(\zeta))\to f(\zeta, g(\zeta))\quad \textrm{for}\quad \nu-\mathrm{a.e}\,\, \zeta \in \mathbb{T}.
\label{eqn:nuaptwise}
\end{equation}

Since the $f_n$ are continuous, we have
\[\int_{\mathbb{T}^2}|f_n(\zeta)-f_m(\zeta) |d\sigma(\zeta) =\int_{\mathbb{T}}|f_n(\zeta, g(\zeta))-f_m(\zeta, g(\zeta))| d\nu (\zeta).\]
This implies that $\{f_n(\zeta, g(\zeta))\}_{n=1}^{\infty}$ is a Cauchy sequence and hence has a limit $F$ in $L^1(\nu)$, and by \eqref{eqn:nuaptwise} we must have $F=f$ in $L^1(\nu)$.  Then $L^1$-convergence gives
\[\int_{\mathbb{T}^2}f(\zeta)d\sigma(\zeta)=\lim_{n\to \infty} \int_{\mathbb{T}^2}f_n(\zeta)d\sigma(\zeta)=\lim_{n\to \infty} \int_{\mathbb{T}}f_n(\zeta, g(\zeta))d\nu(\zeta)=
\int_{\mathbb{T}}f(\zeta, g(\zeta))d\nu(\zeta),\]
which gives \eqref{eqn:sigma} for $f$.
       \end{proof}
       
\subsection{Properties of $J_{\alpha}$} \label{sec:clark2}

Recall that the isometry $J_{\alpha}\colon K_{\phi} \rightarrow L^2(\sigma_{\alpha})$ is obtained by first defining the operator on reproducing kernels $k_w$ as
\[J_{\alpha}[k_w](\zeta):=(1-\alpha\overline{\phi(w)})C_w(\zeta), \quad \text{for }w \in \mathbb{D}^2, \zeta \in \mathbb{T}^2,\]
and then extending it to the rest of $K_\phi$.  Theorem \ref{thm:isom} details our main results about $J_\alpha$, which are proved below in two propositions.

First, unlike the one-variable case, these isometries $J_\alpha$ need not be unitary. The exact situation in our setting is encoded in the following result:
       
 \begin{proposition} \label{thm:unitary} The isometric embedding $J_{\alpha}: K_{\phi} \rightarrow L^2(\sigma_{\alpha})$ is unitary if and only if $\alpha$ is a generic value for $\phi$. 
 \end{proposition}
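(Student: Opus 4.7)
The plan is to identify the range of $J_\alpha$ as a concrete subspace of $L^2(\sigma_\alpha)$ and then decide when it fills all of $L^2(\sigma_\alpha)$. Since $\{k_w\}_{w\in\mathbb{D}^2}$ is total in $K_\phi$ and $J_\alpha$ is a bounded isometry, the range of $J_\alpha$ equals the closed linear span in $L^2(\sigma_\alpha)$ of the functions $J_\alpha[k_w]=(1-\alpha\overline{\phi(w)})C_w$. Because $|\phi(w)|<1$ on $\mathbb{D}^2$ the scalar is nonzero, so the range coincides with the closed span $\mathcal{S}_\alpha$ of $\{C_w:w\in\mathbb{D}^2\}$. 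I would therefore show $\mathcal{S}_\alpha=L^2(\sigma_\alpha)$ exactly when $\alpha$ is generic, treating the two implications via the Clark measure formulas in Propositions \ref{thm:RIF1n} and \ref{prop:eclark}.

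For the generic direction, take $g\in L^2(\sigma_\alpha)$ with $g\perp\mathcal{S}_\alpha$ and expand each $C_w$ as a geometric series in $\bar w_1,\bar w_2$; matching coefficients gives $\int_{\mathbb{T}^2}\bar g\,\zeta_1^m\zeta_2^n\,d\sigma_\alpha=0$ for all $m,n\geq 0$. Proposition \ref{thm:RIF1n}, together with $\overline{B_\alpha(\zeta)}=1/B_\alpha(\zeta)$ on $\mathbb{T}$, then converts this into a one-variable problem. Setting $h(\zeta):=g(\zeta,\overline{B_\alpha(\zeta)})$ and $H:=\bar h\,W_\alpha$, Cauchy--Schwarz against the finite measure $\nu_\alpha$ yields $H\in L^1(m)$, and the orthogonality becomes
\[
\int_\mathbb{T} H(\zeta)\,\zeta^m\,B_\alpha(\zeta)^{-n}\,dm(\zeta)=0,\qquad m,n\geq 0.
\]
The $n=0$ relations place $H$ in $H^1_0$, and for each $n\geq 1$ the remaining ones say $H/B_\alpha^n\in H^1_0$, so the inner factor of $H$ is divisible by $B_\alpha^n$ for every $n$. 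Since $\deg B_\alpha=n\geq 1$ in the generic case (Theorem \ref{thm:Ca}), $B_\alpha$ has at least one zero $a\in\mathbb{D}$, at which the inner factor of $H$ would need a zero of infinite order; hence $H\equiv 0$, which forces $h=0$ $\nu_\alpha$-a.e.\ and so $g=0$ in $L^2(\sigma_\alpha)$.

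For the exceptional direction, I would exhibit a nonzero element of $\mathcal{S}_\alpha^\perp$ explicitly. After relabeling, assume $L_1\subseteq\mathcal{C}_\alpha$ and set $g(\zeta):=\bar\zeta_2\,\chi_{L_1}(\zeta)$. Proposition \ref{prop:eclark} immediately gives $\|g\|_{L^2(\sigma_\alpha)}^2=c_1^\alpha>0$, and for any $w\in\mathbb{D}^2$ the same proposition yields
\[
\int_{\mathbb{T}^2} C_w(\zeta)\,\bar g(\zeta)\,d\sigma_\alpha(\zeta)=\frac{c_1^\alpha}{1-\tau_1\bar w_1}\int_\mathbb{T}\frac{\zeta_2}{1-\zeta_2\bar w_2}\,dm(\zeta_2)=0,
\]
since every Taylor coefficient $\int\zeta_2^{n+1}\,dm$ of the inner integral vanishes. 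Thus $\mathcal{S}_\alpha^\perp$ is nontrivial and $J_\alpha$ cannot be onto.

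The main obstacle is the generic direction, specifically the step that turns the system $H/B_\alpha^n\in H^1_0$ for all $n\geq 0$ into $H\equiv 0$. This requires inner-outer factorization in $H^1$ together with the observation that no nonzero $H^1$ function has inner factor divisible by arbitrarily large powers of a finite Blaschke product. The remaining ingredients --- the geometric-series expansion, the reduction to $\mathbb{T}$ via Proposition \ref{thm:RIF1n}, the $L^1$ bound for $H$ via Cauchy--Schwarz, and the counterexample in the exceptional case --- all follow routinely once the Clark measure formulas are in hand.
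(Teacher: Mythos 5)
Your proposal is correct, and the generic direction is genuinely different from the paper's argument. The paper routes through Doubtsov's Theorem 3.2 (unitarity $\Leftrightarrow$ density of $A(\mathbb{D}^2)$ in $L^2(\sigma_\alpha)$) and then explicitly constructs bidisk-algebra interpolants: writing $B_\alpha = \gamma \prod_{j=1}^n b_{a_j}$, they build $s_1(z) = b_{\overline{a}_1}^{-1}\big[\big(\gamma \prod_{j\geq 2} b_{a_j}(z_1)\big) z_2\big] \in A(\mathbb{D}^2)$ and verify it restricts to $\overline{\zeta}_1$ on $E_\alpha$, so trigonometric polynomials are matched by algebra functions on the support. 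You instead argue by duality: you identify $\text{range}(J_\alpha)$ as the closed span of $\{C_w\}$, take $g$ orthogonal to it, push the resulting moment conditions to $\mathbb{T}$ via Proposition \ref{thm:RIF1n}, and use inner--outer factorization to conclude that the $L^1$ density $H = \bar{h}W_\alpha$ must vanish because it would otherwise be divisible by $B_\alpha^n$ for every $n$ at a zero of $B_\alpha$ in $\mathbb{D}$. This trades a somewhat opaque explicit construction for a soft Hardy-space argument, and it has the side benefit of not needing to invoke Doubtsov's density criterion as a black box --- your reduction to polynomials reproves the relevant equivalence directly. Two small points to spell out in a final write-up: (a) after concluding $H \equiv 0$, you should note $W_\alpha > 0$ $m$-a.e. (it has at most finitely many zeros by Lemma \ref{lem:Wbd}) so that $h=0$ $\nu_\alpha$-a.e., hence $g=0$ in $L^2(\sigma_\alpha)$; and (b) for a given $n$, the passage from ``$H \overline{B_\alpha}^n$ has vanishing non-positive Fourier coefficients'' to ``$B_\alpha^n$ divides $H$'' needs the one-line observation that if $G_n \in H^1$ has boundary values $H\overline{B_\alpha}^n$ then $G_n B_\alpha^n$ and $H$ are $H^1$ functions with the same boundary values, hence equal in $\mathbb{D}$. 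For the exceptional direction your argument is essentially the paper's (both hinge on $\overline{\zeta}$ not being in the closure of $H^2(\mathbb{D})$ in $L^2(m)$), but yours is slightly more direct since you exhibit the nonzero orthogonal element $g = \overline{\zeta}_2\chi_{L_1}$ explicitly rather than deriving a contradiction from density of $A(\mathbb{D}^2)$.
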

       
\begin{proof} ($\Rightarrow$) Assume that $\alpha$ is generic.  By Theorem 3.2 in \cite{D19}, we need only show that $A(\mathbb{D}^2)$ is dense in $L^2(\sigma_{\alpha})$. Since $\sigma_{\alpha}$ is a finite Radon measure, $C(\mathbb{T}^2)$ is dense in  $L^2(\sigma_{\alpha})$ and by the Stone-Weierstrass theorem, the set of two-variable trigonometric polynomials is dense in $C(\mathbb{T}^2)$ and hence, in $L^2(\sigma_{\alpha})$. Thus, to show $J_\alpha$ is unitary, we need only show that 
each two-variable trigonometric polynomial agrees with some function in $A(\mathbb{D}^2)$ on $E_\alpha$, which contains the support of $\sigma_\alpha.$ 

Let $h(\zeta) = \zeta_1^m \zeta_2^n$ be an arbitrary trigonometric monomial. To construct a function in $A(\mathbb{D}^2)$ that agrees with $h$ on $E_\alpha$,  first define $t_1, t_2 \in A(\mathbb{D}^2)$ by $t_1(z) = z_1$ and $t_2(z) = z_2$. Then, recall from Theorem \ref{thm:Ca} that $\deg B_{\alpha} =n$. Write $B_{\alpha} = \gamma \prod_{j=1}^n b_{a_j}$, where $\gamma \in \mathbb{T}$ and each $b_{a_j}=(z-a_j)/(1-\bar{a}_j z)$ is the Blaschke factor with zero $a_j \in \mathbb{D}$. Let Let $b^{-1}_{\overline{a}_1}$ denote the inverse function of $b_{\overline{a}_1}$ and define $s_1, s_2 \in A(\mathbb{D}^2)$ by 
\[ s_1(z) =  b_{\overline{a}_1}^{-1} \Big[ \Big( \gamma \prod_{j=2}^n b_{a_j}(z_1) \Big)z_2 \Big],\]
and $s_2(z) = B_\alpha(z_1)$. Then, restricting to $E_\alpha$, we have
\[ s_1(\zeta, \overline{B_{\alpha}(\zeta)}) =b_{\overline{a}_1}^{-1} \Big[ \Big( \gamma \prod_{j=2}^n b_{a_j}(\zeta) \Big)  \overline{ \Big( \gamma \prod_{j=1}^n b_{a_j}(\zeta)\Big)} \Big]  = b_{\overline{a}_1}^{-1} \left[ b_{\overline{a}_1}(\bar{\zeta}) \right] = \bar{\zeta}\]
and $s_2(\zeta, \overline{B_{\alpha}(\zeta)})  = B_\alpha(\zeta).$ As
\[ h(\zeta,  \overline{B_{\alpha}(\zeta)}) = \zeta^m \overline{B_{\alpha}(\zeta)}^n,\]
$h$ agrees with one of $t_1^{|m|}t_2^{|n|},$ $t_1^{|m|}s_2^{|n|},$ $s_1^{|m|} t_2^{|n|},$ $s_1^{|m|}s_2^{|n|}$ on $E_\alpha$. Taking linear combinations of these shows that 
every two-variable trigonometric polynomial agrees with some $F \in A(\mathbb{D}^2)$ on $E_\alpha$ and completes the proof of this forward direction. \\

\noindent ($\Leftarrow$) Assume that $\alpha$ is exceptional and $\phi^*(\tau_k,\lambda_k)=\alpha$. By
way of contradiction, assume that $A(\mathbb{D}^2)$ is dense in $L^2(\sigma_{\alpha})$.  Let $f(\zeta) = \bar{\zeta}_2$.  By assumption, there is a sequence $(f_n) \subseteq A(\mathbb{D}^2)$ that converges to $f$ in $L^2(\sigma_\alpha)$. Then by Theorem \ref{thm:clark}, there is a $c^{\alpha}_k >0$ such that 
\[\int_{\mathbb{T}}| f(\tau_k, \zeta)-f_n(\tau_k, \zeta)|^2dm(\zeta) \le \tfrac{1}{c^{\alpha}_k} \|f-f_n\|^2_{L^2(\sigma_\alpha)} \rightarrow 0,\]
as $n\rightarrow \infty$. Since each $f_n(\tau_k, \cdot)$ is in $H^2(\mathbb{D})$, so is the limit function $f(\tau_k, \cdot)$. Since $f(\tau_k, \zeta) =\bar{\zeta}$, it is clearly not in $H^2(\mathbb{D})$ and so, we obtain the needed contradiction. 
\end{proof}

We can also identify the exact form of the isometric operator $J_\alpha$.

\begin{proposition} \label{thm:Jalpha} For each each $f \in K_\phi$, the isometry $J_\alpha: K_\phi \rightarrow L^2(\sigma_\alpha)$ satisfies 
\[ (J_\alpha f ) (\zeta) = f^*(\zeta) \text{ for } \sigma_\alpha\text{-a.e.} \ \zeta\in \mathbb{T}^2.\] 
\end{proposition}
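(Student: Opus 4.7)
The strategy is to verify the identity first on the dense span of reproducing kernels and then extend it to general $f\in K_\phi$ via a pointwise approximation argument driven by the one-variable decomposition from Theorem~\ref{thm:model}. For $w\in\mathbb{D}^2$, the kernel $k_w(z)=(1-\overline{\phi(w)}\phi(z))C_w(z)$ is continuous on $\overline{\mathbb{D}}^2$ off the finite singular set $\{(\tau_k,\lambda_k)\}$ of $\phi$, and at every other $\zeta\in\mathbb{T}^2$ has non-tangential value $k_w^*(\zeta)=(1-\overline{\phi(w)}\phi^*(\zeta))C_w(\zeta)$. Since $\mathrm{supp}(\sigma_\alpha)\subseteq\mathcal{C}_\alpha$, since $\phi^*(\zeta)=\alpha$ at every $\zeta\in\mathcal{C}_\alpha$ with $p(\zeta)\ne 0$ (and $m$-a.e.\ on each $L_k\subseteq\mathrm{supp}(\sigma_\alpha)$ by Lemma~\ref{lem:nt}), and since Theorem~\ref{thm:RIF1} rules out $\sigma_\alpha$ point-masses at the finitely many singular points, we conclude $k_w^*(\zeta)=(1-\alpha\overline{\phi(w)})C_w(\zeta)=J_\alpha k_w(\zeta)$ for $\sigma_\alpha$-a.e.\ $\zeta$. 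Linearity then gives $J_\alpha F = F^*$ $\sigma_\alpha$-a.e.\ for every finite linear combination $F$ of reproducing kernels.

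For general $f\in K_\phi$, choose finite linear combinations $F_n$ of reproducing kernels with $F_n\to f$ in $K_\phi$. Since $J_\alpha$ is an isometry, $J_\alpha F_n\to J_\alpha f$ in $L^2(\sigma_\alpha)$ and a subsequence converges pointwise $\sigma_\alpha$-a.e., so by the previous step it suffices to produce a further subsequence on which $F_n^*\to f^*$ pointwise $\sigma_\alpha$-a.e. By Theorem~\ref{thm:model}, write $f=\tfrac{Q}{p}h(z_1)+\sum_j\tfrac{R_j}{p}g_j(z_2)$ and $F_n=\tfrac{Q}{p}h_n(z_1)+\sum_j\tfrac{R_j}{p}g_{j,n}(z_2)$ with $h,h_n,g_j,g_{j,n}\in H^2(\mathbb{D})$. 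The norm identity there forces $h_n\to h$ and $g_{j,n}\to g_j$ in $H^2(\mathbb{D})$, so a further subsequence gives pointwise $m$-a.e.\ convergence $h_n^*\to h^*$ and $g_{j,n}^*\to g_j^*$ on $\mathbb{T}$.

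It remains to transfer this $m$-a.e.\ convergence into pointwise $\sigma_\alpha$-a.e.\ convergence on $E_\alpha$ and each $L_k\subseteq\mathrm{supp}(\sigma_\alpha)$. On $E_\alpha$, parametrized by $\zeta_1\mapsto(\zeta_1,\overline{B_\alpha(\zeta_1)})$, the standard product non-tangential limit theorem combined with continuity of $Q/p$ and $R_j/p$ at regular points of $E_\alpha$ yields $f^*(\zeta_1,\overline{B_\alpha(\zeta_1)})=\tfrac{Q(\zeta_1)}{p(\zeta)}h^*(\zeta_1)+\sum_j\tfrac{R_j(\zeta)}{p(\zeta)}g_j^*(\overline{B_\alpha(\zeta_1)})$ for $m$-a.e.\ $\zeta_1$, and likewise for each $F_n^*$; since $B_\alpha$ is a finite Blaschke product (Theorem~\ref{thm:Ca}) whose boundary map pulls back $m$-null sets to $m$-null sets, the composition $g_{j,n}^*\circ\overline{B_\alpha}\to g_j^*\circ\overline{B_\alpha}$ still converges $m$-a.e., and by Theorem~\ref{thm:clark} this gives pointwise $\sigma_\alpha$-a.e.\ convergence on $E_\alpha$. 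On each $L_k\subseteq\mathrm{supp}(\sigma_\alpha)$, $Q(\tau_k)=0$ together with the Hardy space estimate $|h(z_1)|\lesssim(1-|z_1|^2)^{-1/2}$ forces the $h$-term of $f$ to have non-tangential limit zero at $(\tau_k,\zeta_2)$ whenever $\zeta_2\ne\lambda_k$, while the matching zeros of $R_j(\tau_k,\cdot)$ and $p(\tau_k,\cdot)$ at $\lambda_k$ let each $R_j/p$ extend continuously across $L_k\setminus\{(\tau_k,\lambda_k)\}$; this yields $f^*(\tau_k,\zeta_2)=\sum_j\tfrac{R_j(\tau_k,\zeta_2)}{p(\tau_k,\zeta_2)}g_j^*(\zeta_2)$ for $m$-a.e.\ $\zeta_2$, with the same identity for each $F_n^*$, and hence pointwise $\sigma_\alpha$-a.e.\ convergence on $L_k$ via Theorem~\ref{thm:clark}. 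The main obstacle is this final step: justifying the slicewise computation of the two-variable non-tangential limits along $E_\alpha$ and $L_k$, and quarantining the pathologies at the singular points $(\tau_k,\lambda_k)$ on $\sigma_\alpha$-null sets.
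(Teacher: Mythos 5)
Your proposal is correct in its logical architecture and takes a genuinely different route from the paper. The paper's proof first establishes a uniform bound $\|f^*\|_{L^2(\sigma_\alpha)} \lesssim \|f\|_{K_\phi}$ (using Proposition~\ref{prop:QR} and the boundedness of the composition operator induced by $B_\alpha$ on $H^2(\mathbb{D})$) so that $T_\alpha f := f^*$ defines a bounded operator on all of $K_\phi$; it then matches $T_\alpha$ with $J_\alpha$ on reproducing kernels and invokes density. You instead exploit the \emph{isometry} of $J_\alpha$ directly: approximating $f$ by finite kernel combinations $F_n$, you get $J_\alpha F_n \to J_\alpha f$ in $L^2(\sigma_\alpha)$ for free, pass to a $\sigma_\alpha$-a.e.\ convergent subsequence, and separately show $F_n^* \to f^*$ $\sigma_\alpha$-a.e.\ by using the norm identity of Theorem~\ref{thm:model} to force $h_n \to h$, $g_{j,n} \to g_j$ in $H^2(\mathbb{D})$ and then transferring pointwise $m$-a.e.\ boundary convergence along $E_\alpha$ and the lines $L_k$. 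What your route buys is that you never need the $L^2(\sigma_\alpha)$-norm estimate for $f \mapsto f^*$ and, in particular, avoid the composition-operator machinery. What the paper's route buys is that the quantitative bound is established as a by-product, and the density argument is the standard ``two bounded operators agree on a dense set'' rather than a two-layer subsequence extraction.

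That said, both approaches hinge on the same technical core, which is precisely the paper's Lemma~\ref{lem:ntvalues}: identifying $f^*$ along $E_\alpha$ (via the slicewise formula $f^*(\zeta,\overline{B_\alpha(\zeta)}) = \tfrac{Q}{p}h^*(\zeta) + \sum_j \tfrac{R_j}{p}g_j^*(\overline{B_\alpha(\zeta)})$) and along each $L_k$ (where the $h$-term vanishes by the $(1-|z_1|^2)^{-1/2}$ estimate and $Q(\tau_k)=0$), while quarantining the singular points $(\tau_k,\lambda_k)$ on a $\sigma_\alpha$-null set. You flag this as ``the main obstacle'' and sketch it but treat it briskly: in particular, you should handle the case $B_\alpha \equiv \gamma$ constant separately (the preimage argument for null sets breaks down, but then all $b_j \equiv 0$ and the $R_j/p$ terms vanish identically on $E_\alpha$), and you should note explicitly that the pointwise $m$-a.e.\ subsequence must be chosen simultaneously for $h_n$ and all $g_{j,n}$ (fine, since there are finitely many). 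With those points addressed, the argument is complete.
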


\begin{proof} Fix $f \in K_\phi$.
By Lemma  \ref{lem:ntvalues},  $f^*$ exists and equals a Borel-measurable function $\sigma_\alpha$-a.e.~on $\mathbb{T}^2$. We claim that $f^* \in L^2(\sigma_\alpha)$ and 
\begin{equation} \label{eqn:sigmabd} \| f^* \|_{L^2(\sigma_\alpha)} \lesssim \| f\|_{K_\phi},\end{equation}
where the implied constant does not depend on $f$. To see this, use Theorem \ref{thm:model} to write
\[  f(z) = \tfrac{Q}{p}(z) h(z_1)+\sum_{j=1}^n \tfrac{R_j}{p}(z) g_j(z_2) \quad \text{ for } z\in \mathbb{D}^2\]
and $g_1, \dots, g_n, h \in H^2(\mathbb{D})$. By the proof of Lemma \ref{lem:ntvalues}, this formula extends to $\mathbb{T}^2$ via non-tangential limits both Lebesgue and $\sigma_\alpha$-a.e. By Proposition \ref{prop:QR}, there is a $b_j \in A(\mathbb{D})$ such that
\[ \left | (\tfrac{R_j}{p})(\zeta, \overline{B_\alpha(\zeta)})\right | = \left | (\tfrac{Q}{p})(\zeta,\overline{B_\alpha(\zeta)}) \right| |b_j(\zeta)|,\]
for all $\zeta \in \mathbb{T}$. Working through the definitions and applying Proposition \ref{prop:QR} give
\[ | (\tfrac{Q}{p})(\zeta,\overline{B_\alpha(\zeta)}) |^2 W_\alpha(\zeta) = \frac{ |p_1(\zeta)|^2-|p_2(\zeta)|^2}{(|p_1(\zeta)|^2-|p_2(\zeta)|^2)^2} |(\tilde{p}_1-\alpha p_2)(\zeta)|^2 \frac{ |p_1(\zeta)|^2 -|p_2(\zeta)|^2}{|(\tilde{p}_1-\alpha p_2)(\zeta)|^2} =1\]
for all $\zeta \in \mathbb{T} \setminus \{\tau_1, \dots, \tau_m\}$. If $B_\alpha$ is non-constant, this immediately implies that 
\[
\begin{aligned}  \int_{\mathbb{T}}& |f^*(\zeta,\overline{B_\alpha(\zeta)})|^2 d \nu_\alpha(\zeta) \\
&\lesssim \int_{\mathbb{T}} | (\tfrac{Q}{p})(\zeta,\overline{B_\alpha(\zeta)}) |^2 \Big( |h^*(\zeta)|^2 + \sum_{j=1}^{n} |b_j(\zeta)|^2 |g^*_j(\overline{B_\alpha(\zeta)}) |^2\Big) W_\alpha(\zeta) dm(\zeta) \\
&=\int_{\mathbb{T}}   |h^*(\zeta)|^2 + \sum_{j=1}^{n} |b_j(\zeta)|^2 |\bar{g}^*_j(B_\alpha(\zeta)) |^2 dm(\zeta) \\
&\lesssim \Big( \| h \|^2_{H^2} + \sum_{j=1}^n \| \overline{g}_j\circ B_\alpha \|_{H^2}^2\Big) \lesssim \| f \|^2_{K_\phi},
\end{aligned}\]
where $\bar{g}$ is the function in $H^2(\mathbb{D})$ whose Taylor coefficients are the complex conjugates of those of $g$.
In this computation, we used Theorem \ref{thm:model} and the well-known fact that composition by a non-constant finite Blaschke product $B_\alpha$ induces a bounded operator on $H^2(\mathbb{D})$, see Theorem 5.1.5 in \cite{MR2007}. Here, the implied constant does not depend on $f$.
If $B_\alpha$ is constant, then the one-variable model space $\hat{K}_{B_{\alpha}} =\{0\}$, so each $b_j \equiv 0$ and  
\[  \int_{\mathbb{T}} |f^*(\zeta,\overline{B_\alpha(\zeta)}))|^2 W_\alpha(\zeta) dm(\zeta) = \| h \|^2_{H^2} \le \| f \|^2_{K_\phi}.\]
 Similarly, for each $1\le j \le n$ and $1 \le k \le m$,  Proposition \ref{prop:QR} gives constants $M_{jk}$ and $d^\alpha_{jk}$ such that for $\zeta \ne \lambda_k$,
\[ \tfrac{Q}{p}(\tau_k, \zeta) \equiv 0 \text{ and } \tfrac{R_j}{p}(\tau_k,\zeta) = M_{jk} \frac{1-B_\alpha(\tau_k) \zeta}{p(\tau_k, \zeta)} =: d^\alpha_{jk},\]
since both the numerator and denominator are linear and by Lemma \ref{lem:factor}, vanish at $\lambda_k$. This shows
\[ c^\alpha_k \int_{\mathbb{T}} |f^*(\tau_k, \zeta)|^2 dm(\zeta) \lesssim \sum_{j=1}^n (d_{jk}^\alpha)^2 \int_{\mathbb{T}} |g_j(\zeta)|^2 dm(\zeta) \lesssim \| f\|_{K_\phi}^2.\]
By Lemma \ref{lem:ntvalues}, this shows $f^* \in L^2(\sigma_\alpha)$. Furthermore, if we define a linear map $T_\alpha: K_\phi \rightarrow L^2(\sigma_\alpha)$ by $(T_\alpha f)= f^*$, then $T_\alpha$ is bounded. Moreover, observe that for  $\zeta \in \mathcal{C}_\alpha \setminus \{ (\tau_1, \lambda_1), \dots, (\tau_m, \lambda_m)\}$, we have
\[ T_\alpha [k_w](\zeta) = (1- \alpha \overline{\phi(w)})C_w(\zeta) = J_\alpha[k_w](\zeta).\]
 Thus, these functions are equal in $ L^2(\sigma_\alpha)$. Since $T_\alpha$ and $J_\alpha$ agree on a dense set of functions in $K_\phi$,  it follows that $T_\alpha = J_\alpha$, which completes the proof. \end{proof}

The proof of Proposition \ref{thm:Jalpha} required the following lemma.

\begin{lemma} \label{lem:ntvalues} If $f \in K_\phi$, then $f^*$ exists and agrees with a Borel measurable function $\sigma_\alpha$-a.e. on $\mathbb{T}^2$ and 
\[  \int_{\mathbb{T}^2} |f^*(\zeta)|^2 d\sigma_\alpha(\zeta) = \int_{\mathbb{T}} |f^*(\zeta, \overline{B_\alpha(\zeta)})|^2 d\nu_\alpha(\zeta) + \sum_{k=1}^m c^\alpha_k \int_{\mathbb{T}} |f^*(\tau_k, \zeta)|^2 dm(\zeta),\]
where $d\nu_\alpha =W_{\alpha} dm$, the functions $B_{\alpha}, W_\alpha$ are from Definition \ref{def:1}, and the $c^\alpha_k$ are from Theorem \ref{thm:clark}.
\end{lemma}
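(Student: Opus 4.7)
The plan is to decompose $f$ via Theorem \ref{thm:model} as $f(z) = \tfrac{Q}{p}(z) h(z_1) + \sum_{j=1}^n \tfrac{R_j}{p}(z) g_j(z_2)$ with $h, g_j \in H^2(\mathbb{D})$, then analyze the non-tangential boundary behavior of each summand at $\sigma_\alpha$-almost every point. Since $\sigma_\alpha$ is supported on $\mathcal{C}_\alpha = E_\alpha \cup \bigcup_{k=1}^m L_k$ and the intersection points $(\tau_k,\overline{B_\alpha(\tau_k)}) = (\tau_k,\lambda_k)$ (see Lemma \ref{lem:factor}(ii)) form a $\sigma_\alpha$-null set, existence of $f^*$ must be checked separately on the curve $E_\alpha$ and on each line $L_k$. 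Measurability of $f^*$ then follows because it is a pointwise limit of Borel measurable functions along any fixed non-tangential approach scheme, and once $f^*$ is available $\sigma_\alpha$-a.e., the claimed integral formula will be deduced by applying Theorem \ref{thm:clark} to $|f^*|^2$ via monotone convergence.

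On $E_\alpha$, parametrized by $\zeta \mapsto (\zeta,\overline{B_\alpha(\zeta)})$, the rational factors $Q/p$ and $R_j/p$ are continuous on $\overline{\mathbb{D}^2}$ away from $\{(\tau_k,\lambda_k)\}_{k=1}^m$, so they extend continuously at $\nu_\alpha$-a.e.\ point of $E_\alpha$. Classical one-variable Fatou provides $h^*(\zeta)$ for $m$-a.e.\ $\zeta$, and since $B_\alpha|_\mathbb{T}$ is a smooth finite-degree covering $\mathbb{T} \to \mathbb{T}$ by Theorem \ref{thm:Ca}, Lebesgue null sets pull back to Lebesgue null sets, so $g_j^*(\overline{B_\alpha(\zeta)})$ also exists for $m$-a.e.\ $\zeta$. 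Consequently $f^*$ exists $\nu_\alpha$-a.e.\ on $E_\alpha$ and equals the natural product of these one-variable limits.

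The analysis on $L_k = \{\tau_k\}\times \mathbb{T}$ is the main obstacle. Using Proposition \ref{prop:QR}, Lemma \ref{lem:factor}(ii), and $Q(\tau_k)=0$, one checks that $R_j/p$ extends continuously to the constant $d^\alpha_{jk}$ on $L_k \setminus \{(\tau_k,\lambda_k)\}$ while $Q/p$ extends to $0$ there, which together with one-variable Fatou for $g_j$ handles the $\tfrac{R_j}{p} g_j(z_2)$ summands and yields the limit $\sum_j d^\alpha_{jk} g_j^*(\zeta)$. The subtle term is $(Q/p)(z) h(z_1)$, since the generic $H^2(\mathbb{D})$ factor $h$ need not admit a non-tangential limit at the specific point $\tau_k$. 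I would handle this by exploiting $Q(\tau_k)=0$ to write $|(Q/p)(z)| \lesssim |z_1-\tau_k|$ in a neighborhood of $(\tau_k,\zeta)$ for $\zeta \neq \lambda_k$, combining this with the reproducing kernel estimate $|h(z_1)| \lesssim (1-|z_1|)^{-1/2}$ and the Stolz equivalence $|z_1-\tau_k| \asymp 1-|z_1|$ along non-tangential approach to force the product to $0$. Once $f^*$ is established $\sigma_\alpha$-a.e.\ in this way, the integral formula follows by applying Theorem \ref{thm:clark} to the truncations $\min(|f^*|^2,N) \in L^1(\sigma_\alpha)$ and passing to the limit by monotone convergence as $N\to\infty$.
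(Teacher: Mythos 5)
Your overall strategy---decomposing $f$ via Theorem \ref{thm:model}, analyzing boundary limits separately on $E_\alpha$ and on the lines $L_k$, handling the $\tfrac{Q}{p}h$ term near $(\tau_k,\zeta)$ with the vanishing of $Q$ at $\tau_k$ together with the reproducing-kernel growth bound and Stolz equivalence, and then obtaining the integral identity by truncation and monotone convergence---is essentially the same route the paper takes, and the key estimates you invoke are the right ones.

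There is, however, a genuine gap in your treatment of $E_\alpha$: you assert that $B_\alpha\vert_\mathbb{T}$ is a smooth finite-degree covering $\mathbb{T}\to\mathbb{T}$ so that Lebesgue null sets pull back to null sets, which is what gives you $g_j^*\bigl(\overline{B_\alpha(\zeta)}\bigr)$ for $m$-a.e.\ $\zeta$. By Theorem \ref{thm:Ca}, $\deg B_\alpha = n - \ell$, and if $\alpha$ is exceptional with $\ell = n$ then $B_\alpha$ is a \emph{constant} $\gamma\in\mathbb{T}$ (this really happens, e.g.\ $B_{-1}\equiv 1$ in Example \ref{ex:fave}). In that case $E_\alpha = \mathbb{T}\times\{\bar\gamma\}$ and the covering-map argument gives you nothing: you would need $g_j^*(\bar\gamma)$ to exist at the single point $\bar\gamma$, which is false for a generic $g_j\in H^2(\mathbb{D})$. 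The paper's proof handles this case separately by observing that when $B_\alpha\equiv\gamma$, each $b_j\equiv 0$ in Proposition \ref{prop:QR}, so $(z_2-\bar\gamma)$ divides each $R_j$; an estimate analogous to your $\tfrac{Q}{p}h$ estimate then forces $\tfrac{R_j}{p}(z)\,g_j(z_2)\to 0$ non-tangentially at $(\zeta_1,\bar\gamma)$, and the problematic boundary value $g_j^*(\bar\gamma)$ is never needed. You need to add this degenerate case to close the argument. A smaller point: ``pointwise limit of measurable functions along a fixed approach scheme'' is not quite enough to exhibit a Borel representative, since the limit fails to exist on the exceptional set; the clean fix (as in the paper) is to define $F$ via $\limsup$ of real and imaginary parts, which is Borel everywhere, and then check it agrees with $f^*$ off the exceptional set.
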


\begin{proof}  By Theorem \ref{thm:model}, there exist $g_1, \dots, g_n, h \in H^2(\mathbb{D})$ such that 
\[  f(z) = \tfrac{Q}{p}(z) h(z_1)+\sum_{j=1}^n \tfrac{R_j}{p}(z) g_j(z_2), \quad \text{ for } z\in \mathbb{D}^2.\]
Let $A \subseteq \mathbb{T}$ be a Borel set with $m(A) = 0$ such that $h, g_1, \dots, g_n$ have non-tangential limits at all $\zeta \in \mathbb{T} \setminus A$.  To finish the set-up, assume $(z^n)=(z_1^n,z_2^n) \rightarrow (\tau_k, \zeta_2) \in \mathbb{T}^2$ non-tangentially, where $\zeta_2 \ne \lambda_k$. Then since $(z_1-\tau_k)$ is a factor of $Q$ and $Q/p$ is continuous near $(\tau_k, \zeta_2)$,
\begin{equation} \label{eqn:Qp} \lim_{n \rightarrow \infty} \left| \tfrac{Q}{p}(z^n) h(z^n_1)\right | \lesssim \lim_{n\rightarrow \infty} |z^n_1-\tau_k| \| h\|_{H^2} \tfrac{1}{\sqrt{1- |z^n_1|^2}} \lesssim  \lim_{n\rightarrow \infty}\sqrt{1-|z^n_1|}=0,\end{equation}
where we also used the reproducing property of $H^2$ and the non-tangential property of $(z^n)$. Similarly, if $B_\alpha$ equals some constant $\gamma \in \mathbb{T},$ then each $\lambda_k = \bar{\gamma}$ and in Proposition \ref{prop:QR}, each $b_j \equiv 0$ and $(z_2- \overline{\gamma})$ divides $R_j$. Thus, arguments analogous to those in \eqref{eqn:Qp} imply that if $(z^n)=(z_1^n,z_2^n) \rightarrow (\zeta_1, \overline{\gamma}) \in \mathbb{T}^2$ non-tangentially with $\zeta_1 \in \mathbb{T} \setminus \{\tau_1,\dots, \tau_m\}$, then
\[ \lim_{n \rightarrow \infty}  \left| \tfrac{R_j}{p}(z^n) g_j(z^n_2)\right | =0, \text{ for }j=1, \dots, n.\]
This implies that $f^*(\zeta)$ exists for all $\zeta \in \mathbb{T}^2 \setminus \hat{A}$, where
\[ \hat{A}:=   \{ (\tau_k, \lambda_k): k=1,\dots, m\} \cup ( (A \setminus \{\tau_1, \dots,\tau_m\}) \times \mathbb{T}) \cup (\mathbb{T} \times A\setminus \{ \bar{\gamma}\} ),\]
where we only include $\bar{\gamma}$ if $B_\alpha$ is constant.
By definition, $\hat{A}$ is a Borel set and we claim $\sigma_\alpha(\hat{A})=0$. Since $\sigma_\alpha$ has no point masses, it is immediate that 
\[ \sigma_\alpha(  \{ (\tau_k, \lambda_k): k=1,\dots, m\}) =0.\]
Set $A_1 = (A \setminus \{\tau_1, \dots,\tau_m\}) \times \mathbb{T}.$ Then as $A_1 \cap L_k =\emptyset$ for each $k$ and  Lemma \ref{lem:Wbd} shows $W_\alpha$ is bounded, we can use Theorem \ref{thm:clark} to compute
\[ \sigma_\alpha(A_1) = \int_{\mathbb{T}}  \chi_{A_1}(\zeta, \overline{B_\alpha(\zeta)}) d\nu_\alpha(\zeta) \lesssim  \int_{\mathbb{T}}  \chi_{A}(\zeta) dm(\zeta) =0.\]
If $B_\alpha$ is non-constant, set $A_2 = \mathbb{T} \times A$. Again by Theorem \ref{thm:clark}, there are constants $c_k^\alpha$ such that 
\[ 
\begin{aligned}
\sigma_\alpha(A_2) &= \int_{\mathbb{T}}  \chi_{A_2}(\zeta, \overline{B_\alpha(\zeta)}) d\nu_\alpha(\zeta) + \sum_{k=1}^m c^\alpha_k \int_{\mathbb{T}} \chi_{A_2} (\tau_k, \zeta) dm(\zeta) \\
&\lesssim   m( \{ \zeta \in \mathbb{T}:  \overline{B_\alpha(\zeta)} \in A\})+ \sum_{k=1}^m c^\alpha_k m(A) =0, 
\end{aligned}\]
where the first set has Lebesgue measure $0$ because non-constant finite Blaschke products are smooth, have non-zero derivatives on $\mathbb{T}$, and are locally invertible on $\mathbb{T}$. Hence, the preimage $\bar{B}_\alpha^{-1}(A)$ must have measure $0$ because $A$ does. If $B_\alpha =\gamma$ is constant, set $A_2 = \mathbb{T} \times (A\setminus \{\bar{\gamma}\}).$ Then
\[ 
\begin{aligned}
\sigma_\alpha(A_2) = \int_{\mathbb{T}}  \chi_{A_2}(\zeta, \bar{\gamma}) d\nu_\alpha(\zeta) + \sum_{k=1}^m c^\alpha_k \int_{\mathbb{T}} \chi_{A_2} (\tau_k, \zeta) dm(\zeta) =0
\end{aligned}\]
by the definition of $A_2$.
Thus, $f^*$ exists $\sigma_\alpha$-a.e.~on $\mathbb{T}^2$. Finally, observe that 
\[ F(\zeta) = \limsup_{r\nearrow 1} \Re(f(r\zeta)) + i  \limsup_{r\nearrow 1} \Im(f(r\zeta)) \]
is Borel measurable since each $f_r(\zeta):=f(r\zeta)$ is continuous on $\mathbb{T}^2$ and $F=f^*$ on $\mathbb{T}^2 \setminus \hat{A}$ and hence $\sigma_\alpha$-a.e. Our prior arguments also imply $f^* (\zeta, \overline{B_\alpha(\zeta)}) =F (\zeta, \overline{B_\alpha(\zeta)})$ for $\nu_\alpha$-a.e.~$\zeta\in \mathbb{T}$ and  $f^* (\tau_k, \zeta) = F(\tau_k,\zeta)$ for $m$-a.e.~$\zeta\in \mathbb{T}.$ To finish the proof, for each $n\in \mathbb{N}$, define the Borel set
\[ 
D_n = \{\zeta \in \mathbb{T}^2: |F(\zeta) |<n\}. 
\]
Then Theorem \ref{thm:clark} combined with the monotone convergence theorem gives
\[
\begin{aligned}
\int_{\mathbb{T}^2} & |f^*(\zeta)|^2  d\sigma_\alpha(\zeta) 
 = \lim_{n \rightarrow \infty}  \int_{\mathbb{T}^2} |F(\zeta)|^2  \chi_{D_n}(\zeta) d\sigma_\alpha(\zeta) \\
& = \lim_{n \rightarrow \infty} \left( \int_{\mathbb{T}} |(F\chi_{ D_n} (\zeta, \overline{B_\alpha(\zeta)})|^2 \ d \nu_\alpha(\zeta) + \sum_{k=1}^m c_k^\alpha \int_{\mathbb{T}} |(F \chi_{ D_n})(\tau_k, \zeta)|^2 dm(\zeta) \right) \\
&  =  \int_{\mathbb{T}} |f^* (\zeta, \overline{B_\alpha(\zeta)})|^2 \ d \nu_\alpha(\zeta) + \sum_{k=1}^m c_k^\alpha \int_{\mathbb{T}} |f^*(\tau_k, \zeta)|^2 dm(\zeta), 
\end{aligned}
\]
which is what we needed to show.
\end{proof}

\section{Applications} \label{sec:apps}
The results from Section \ref{sec:n1} have implications for the structure of Agler decompositions and connections to the study of extreme measures from \cite{Kne19b, MacD82, MacD90} and the references therein. In this section, we again assume $\phi = \frac{\tilde{p}}{p}$ is a degree $(n,1)$ rational inner function and throughout, will use the notation denoted earlier in Definition \ref{def:1} and Remark \ref{rem:notation}.

\subsection{Agler Decompositions}

Recall that each such $\phi$ possesses an Agler decomposition from Theorem \ref{thm:model} arising from a particular orthonormal list in $K_\phi$. Moreover, the polynomial $Q$ in that decomposition can be computed directly on $\mathbb{T}$ via Proposition \ref{prop:QR}. In the case of exceptional $\alpha$, we can apply Theorem \ref{thm:clark} to specify some of the remaining polynomials $R_1, \dots, R_n$ from \eqref{eqn:AD1}.

\begin{theorem} \label{thm:R} Let $\alpha \in \mathbb{T}$ be exceptional for $\phi$ and (after reordering if necessary) assume $\phi^*(\tau_k, \lambda_k) = \alpha$ for $k=1,\dots, \ell$. Using Theorem \ref{thm:Ca}, write $B_\alpha = b^1_\alpha / b^2_\alpha,$ where each $\deg b_\alpha^i = n - \ell.$ Then in \eqref{eqn:AD1}, we can take
\begin{equation} \label{eqn:Rj} R_j(z) = d^{\alpha}_j \Big(  b_\alpha^2(z_1)-z_2 b_\alpha^1(z_1) \Big) \prod_{\substack{1 \le k \le \ell \\ k \ne j}} (z_1 -\tau_k),\end{equation}
for $j =1 , \dots, \ell,$ where each $d^{\alpha}_j>0$ is chosen so $c^\alpha_j \| R_j/p(\tau_j, \cdot) \|^2_{H^2(\mathbb{D})}=1 $ and $c^\alpha_j$ is from Proposition \ref{prop:eclark}.
\end{theorem}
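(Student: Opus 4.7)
The strategy is to show that the functions $e_j := R_j/p$, for $j = 1, \ldots, \ell$, form an orthonormal subset of the $n$-dimensional Hilbert space $\mathcal{H}(K_2)$ from Theorem \ref{thm:model}. Once this is established, I extend $\{e_1, \ldots, e_\ell\}$ to an orthonormal basis $\{e_1, \ldots, e_n\}$ of $\mathcal{H}(K_2)$, set $R_j = p\,e_j$ for $j > \ell$, and read off the Agler decomposition \eqref{eqn:AD1} from $K_2(z,w) = \sum_j e_j(z) \overline{e_j(w)}$ together with the unchanged $K_1(z,w) = Q(z)\overline{Q(w)}/(p(z)\overline{p(w)})$.

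The algebraic checks are straightforward. Since $\deg b_\alpha^i = n - \ell$ and the product $\prod_{k \ne j, k \le \ell}(z_1 - \tau_k)$ contributes $\ell - 1$ extra factors in $z_1$, we have $\deg R_j \le (n-1,1)$. By Lemma \ref{lem:factor}(ii), $b_\alpha^2(\tau_k) = \lambda_k b_\alpha^1(\tau_k)$ for every $k = 1, \ldots, m$, so the factor $b_\alpha^2(z_1) - z_2 b_\alpha^1(z_1)$ vanishes at $(\tau_j, \lambda_j)$ and at every $(\tau_k, \lambda_k)$ with $k > \ell$; for $k \ne j$ with $k \le \ell$, the factor $(z_1 - \tau_k)$ in the product provides the vanishing. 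Thus $R_j$ vanishes at every singularity of $\phi$, and $R_j/p$ extends to a bounded analytic function on $\mathbb{D}^2$. The heart of the argument is then the support structure of $e_j^* = J_\alpha e_j$ obtained from Proposition \ref{thm:Jalpha}: on $E_\alpha$, the relation $z_2 = \overline{B_\alpha(z_1)} = b_\alpha^2(z_1)/b_\alpha^1(z_1)$ forces $b_\alpha^2(z_1) - z_2 b_\alpha^1(z_1) = 0$, so $e_j^* \equiv 0$ on $E_\alpha$; on $L_k$ with $k \le \ell$ and $k \ne j$, the product vanishes, so $e_j^*|_{L_k} \equiv 0$. Hence $e_j^*$ is concentrated on $L_j$. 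Plugging into Lemma \ref{lem:ntvalues} and invoking the normalization $c_j^\alpha \|R_j/p(\tau_j, \cdot)\|^2_{H^2} = 1$ gives $\|e_j\|_{K_\phi} = 1$, and the disjointness of the $L_k$ yields $\langle e_i, e_j\rangle_{K_\phi} = 0$ for $i \ne j$.

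The main technical obstacle is upgrading this orthonormality in $K_\phi$ to membership in $\mathcal{H}(K_2)$. That $e_j \in K_\phi$ follows because $R_j$ matches the structural form from Proposition \ref{prop:QR}(ii): one may write $R_j = r_j(z_1)(1 - B_\alpha(z_1) z_2)$ with $r_j = d_j^\alpha b_\alpha^2 \prod_{k \ne j}(z_1 - \tau_k)$ and $b_j \equiv 0$. To verify $e_j \in \mathcal{H}(K_2) = S_2^{\mathrm{min}} \ominus M_{z_2} S_2^{\mathrm{min}}$, I use the same disjoint-support machinery in $L^2(\sigma_\alpha)$: orthogonality to $S_1^{\mathrm{max}} = \{(Q/p) h(z_1) : h \in H^2(\mathbb{D})\}$ is automatic since $Q(\tau_k) = 0$ (Theorem \ref{thm:model}) gives $(Q/p)^*|_{L_k} \equiv 0$ while $e_j^*|_{E_\alpha} \equiv 0$; orthogonality to $M_{z_2} S_2^{\mathrm{min}}$ uses the explicit form of $e_j^*|_{L_j}$ together with the fact that $p(\tau_j, \zeta)$ is linear in $\zeta$ with zero at $\lambda_j = \overline{B_\alpha(\tau_j)}$ (Lemma \ref{lem:factor}(ii)) to produce the necessary cancellation. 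Once $\{e_1, \ldots, e_\ell\}$ is inside $\mathcal{H}(K_2)$, extending to a full orthonormal basis completes the Agler decomposition with the prescribed $R_1, \ldots, R_\ell$.
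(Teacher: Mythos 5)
Your orthonormality argument is essentially the paper's: you observe that $R_j$ vanishes on $E_\alpha$ and on each $L_k$ with $k \le \ell$, $k \ne j$, apply Proposition~\ref{thm:Jalpha} and the exceptional Clark measure formula, and compute inner products concentrated on $L_j$. That part is sound. The genuine gap is in establishing $e_j := R_j/p \in \mathcal{H}(K_2)$ in the first place. You assert that ``$e_j \in K_\phi$ follows because $R_j$ matches the structural form from Proposition~\ref{prop:QR}(ii).'' But Proposition~\ref{prop:QR}(ii) states that the specific polynomials appearing in some Agler decomposition can be written in that form — it is a \emph{necessary} condition on those particular polynomials, not a \emph{sufficient} condition guaranteeing that every polynomial of that shape, scaled by $1/p$, lies in $K_\phi$ or $\mathcal{H}(K_2)$. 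Nothing you cite rules out, say, that $R_j/p$ fails to be in $H^2(\mathbb{D}^2)$, or fails to be orthogonal to $\phi H^2(\mathbb{D}^2)$.

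The paper closes precisely this gap with a dimension count: starting from some fixed orthonormal basis $\hat{R}_1/p,\dots,\hat{R}_n/p$ of $\mathcal{H}(K_2)$ and the representation in Proposition~\ref{prop:QR}(ii), it defines a linear map $T\colon \mathrm{Span}\{\hat{R}_1,\dots,\hat{R}_n\} \to \hat{K}_{B_\alpha}$ sending $\hat{R}_j \mapsto b_j$. Since $\dim \hat{K}_{B_\alpha} = n-\ell$, $\ker T$ has dimension at least $\ell$; any $R \in \ker T$ has the form $q(z_1)\bigl(b_\alpha^2(z_1) - z_2 b_\alpha^1(z_1)\bigr)$ with $\deg q < \ell$, and that space also has dimension $\ell$, so they coincide. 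Crucially, $\ker T \subseteq \mathrm{Span}\{\hat{R}_1,\dots,\hat{R}_n\}$, so every such $R$ automatically satisfies $R/p \in \mathcal{H}(K_2)$ — membership is inherited, not proved from scratch. Your alternative route of showing $e_j \perp S_1^{\max}$ and $e_j \perp M_{z_2}S_2^{\min}$ cannot substitute for this, because those computations apply the isometry $J_\alpha$ to $e_j$, and $J_\alpha$ is only defined on $K_\phi$; you would need $e_j \in K_\phi$ before invoking $J_\alpha e_j = e_j^*$, which is exactly what you haven't established. In addition, your argument for $e_j \perp M_{z_2}S_2^{\min}$ is only sketched (``the necessary cancellation'') — it can be made to work by observing that $G^*(\tau_j,\cdot) \in H^2(\mathbb{D})$ for $G \in S_2^{\min}$, but you neither state nor justify this. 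I recommend adopting the paper's kernel-dimension argument for the membership step and keeping your orthonormality computation.
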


\begin{proof} By the proof of Theorem \ref{thm:model}, the $R_1, \dots, R_n$ from \eqref{eqn:AD1} are exactly obtained by specifying that $R_1/p, \dots, R_n/p$ be an orthonormal basis for $\mathcal{H}(K_2)$. Thus, we need only show that for the $R_j$ defined in \eqref{eqn:Rj}, $R_1/p, \dots, R_\ell/p$ are in $\mathcal{H}(K_2)$ and form an orthonormal set there.

To that end, as in Theorem \ref{thm:model}, let $\hat{R}_1/p, \dots, \hat{R}_n/p$ be some orthonormal basis for $\mathcal{H}(K_2)$. Recall that $\hat{K}_{B_\alpha}:=H^2(\mathbb{D}) \ominus B_{\alpha} H^2(\mathbb{D})$ denotes the one variable model space associated to $B_\alpha.$  Then Proposition \ref{prop:QR} implies that for each $j$, there is a unique polynomial $\hat{r}_j$ with $\deg \hat{r}_j \le n-1$ and function $b_j \in \hat{K}_{B_\alpha}$ such that 
\[ \hat{R}_j(z) = \hat{r}_j(z_1) \Big( 1- B_\alpha(z_1) z_2\Big) + z_2 Q(z_1) b_j(z_1).\]
Define a linear map $T\colon \text{Span}\{ \hat{R}_1, \dots, \hat{R}_n\} \rightarrow \hat{K}_{B_\alpha}$ by first specifying $T(\hat{R}_j) = b_j$ and then extending by linearity.  As $\dim \hat{K}_{B_\alpha} = n-\ell$, it follows that $\dim(\ker T ) \ge \ell$.  If $R \in \ker (T)$, then for some $r$ with $\deg r <n$, 
\begin{equation}\label{eqn:kerR} R(z) = r(z_1) (1- B_\alpha(z_1) z_2) = \frac{r(z_1)}{b^2_\alpha(z_1)}  \Big(  b_\alpha^2(z_1)-z_2 b_\alpha^1(z_1) \Big) = q(z_1) \Big(  b_\alpha^2(z_1)-z_2 b_\alpha^1(z_1) \Big),\end{equation}
where $q \in \mathbb{C}[z]$ with $\deg q < \ell$. Note that the set of such $R$ has dimension $\ell$. By comparing dimensions, each $R$ given in \eqref{eqn:kerR} must be in $\ker(T)$ and hence, each $R$ given in \eqref{eqn:kerR} satifies $R/p \in \mathcal{H}(K_2)$. In particular, this implies that each $R_j$ from \eqref{eqn:Rj} satisfies $R_j/p \in \mathcal{H}(K_2)$.

 To show $R_1/p, \dots, R_\ell/p$ are orthonormal in $K_\phi$, we use Proposition \ref{prop:eclark} and Theorem \ref{thm:Jalpha}. First, observe that those two results combine to imply that $R_j/p(\tau_j, \cdot) \in H^2(\mathbb{D}) \setminus \{0\}$, so $d^\alpha_j$ is well defined. Then, one can use the fact that each $R_j$ vanishes on $E_\alpha$ and each $L_k$ with $1\le k \le \ell$ and $k\ne j$ to conclude:
 \[
 \begin{aligned} 
  \left\langle \tfrac{R_i}{p},  \tfrac{R_j}{p} \right \rangle_{K_\phi} &=   \left\langle J_\alpha \left( \tfrac{R_i}{p} \right),  J_\alpha \left(\tfrac{R_j}{p} \right) \right \rangle_{L^2(\sigma_\alpha)} \\
 &= \int_{\mathbb{T}} \tfrac{R_i}{p}(\zeta,\overline{B_\alpha(\zeta)}) \overline{ \tfrac{R_j}{p}(\zeta,\overline{B_\alpha(\zeta)}) } d \nu_\alpha(\zeta) + \sum_{k=1}^\ell c^\alpha_k \int_{\mathbb{T}}  \tfrac{R_i}{p}(\tau_k, \zeta)\overline{ \tfrac{R_j}{p}(\tau_k, \zeta) } \ dm(\zeta) \\
 & = 0 + \sum_{k = i \text{ or }k= j}   c^\alpha_k \int_{\mathbb{T}}  \tfrac{R_i}{p}(\tau_k, \zeta)\overline{ \tfrac{R_j}{p}(\tau_k, \zeta) } \ dm(\zeta) \\
 & =\left\{ \begin{array}{cc}  1 & \text{ if } i=j \\
0 & \text{ if } i \ne j. \end{array} \right.
 \end{aligned}
 \]
Thus, $\{ R_1/p, \dots, R_\ell/p\}$ is an orthonormal set in $K_\phi$ and hence in $\mathcal{H}(K_2)$, which completes the proof.
\end{proof}

\subsection{Extreme Points} 

Recall that $\mathcal{P}_2 = \{ f \in \text{Hol}(\mathbb{D}^2): \Re f(z) >0 \text{ and } f(0,0) =1\}$ and $M:\mathcal{P}_2 \rightarrow P(\mathbb{T}^2)$ is the map that takes $f \in \mathcal{P}_2$ to the unique  
 Borel probability measure $\mu_f$ on $\mathbb{T}^2$ with 
\[ f(z) = \int_{\mathbb{T}^2} P_z(\zeta) d\mu_f(\zeta) \quad \text{ for } z\in \mathbb{D}^2. \]
for some $f \in \mathcal{P}_2$ and $f$ is an extreme point of $\mathcal{P}_2$ if and only if $\mu_f$ is an extreme point of $M(\mathcal{P}_2)$.  As mentioned in the introduction, Forelli, McDonald, and Knese have proved a number of interesting results related to such extreme points. For example, Knese proved the following result in \cite[Theorem 1.5]{Kne19b}:

\begin{theorem} \label{thm:knese} Let $q$ be a polynomial with no zeros on $\mathbb{D}^2$ and let $\tilde{q}$ be the reflection of $q$ with  $\deg \tilde{q} =\deg q$. Assume that $q$ is $\mathbb{T}^2$-saturated, $\tilde{q}, q$ share no common factors, $\tilde{q}(0,0)=0$, and $q-\tilde{q}$ is irreducible. Then $f : = \frac{q+\tilde{q}}{q-\tilde{q}}$ is an extreme point of $\mathcal{P}_2$. 
\end{theorem}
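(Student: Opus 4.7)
The plan is to identify $f$ with a Clark measure on $\mathbb{T}^2$ and exploit the Hilbert-space and density results of Section~\ref{sec:clark2}. First I would observe that since $q$ has no zeros on $\mathbb{D}^2$ and $\deg q = \deg \tilde{q}$, the quotient $\phi := \tilde{q}/q$ is a two-variable RIF, and the hypothesis $\tilde{q}(0,0)=0$ gives $\phi(0,0)=0$. Consequently $f = (1+\phi)/(1-\phi)$ lies in $\mathcal{P}_2$ and its representing measure is the Clark measure $\mu_f = \sigma_1$ of $\phi$ at $\alpha=1$. To prove extremality, I would assume $f = \tfrac{1}{2}(f_1+f_2)$ with $f_1,f_2\in\mathcal{P}_2$, so that $\mu_f = \tfrac{1}{2}(\mu_{f_1}+\mu_{f_2})$, apply Radon--Nikodym to write $\mu_{f_1} = h\mu_f$ for some real $h\in L^\infty(\mu_f)$ with $0\le h \le 2$, and aim to show $h \equiv 1$.

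The next step is to verify that $\alpha = 1$ is generic for $\phi$, so that Theorem~\ref{thm:isom} applies. If $1$ were exceptional, Lemma~\ref{lem:nt} would force $(z_1-\tau_k)$ to divide $q-\tilde{q}$; since $q-\tilde{q}$ is irreducible this would force $q - \tilde{q} = c(z_1 - \tau_k)$, and a short B\'ezout-count argument using $\mathbb{T}^2$-saturation together with $\deg q = \deg \tilde{q}$ rules out this degenerate product-type configuration. With genericity in hand, Theorem~\ref{thm:isom} asserts that $J_1\colon K_\phi \to L^2(\mu_f)$ is unitary, so $A(\mathbb{D}^2)$ is dense in $L^2(\mu_f)$. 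Pluriharmonicity of the Poisson integrals of $\mu_f$ and $\mu_{f_1}$, combined with reality of $h$, then translates into the orthogonality of $h-1$ in $L^2(\mu_f)$ to every monomial $\bar{\zeta}_1^m\zeta_2^n$ and $\zeta_1^m\bar{\zeta}_2^n$ with $m,n>0$, as well as to the constants.

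The hard part will be promoting these moment conditions to $h \equiv 1$. On the support $\mathcal{C}_1 = \mathcal{Z}_{q-\tilde{q}}\cap \mathbb{T}^2$, the reflection identity $\overline{q(\zeta)} = \zeta_1^{-d_1}\zeta_2^{-d_2}\tilde{q}(\zeta)$ valid on $\mathbb{T}^2$, combined with the defining relation $q(\zeta)=\tilde{q}(\zeta)$ on $\mathcal{C}_1$, furnishes a concrete polynomial identity linking analytic and anti-analytic monomials on the support. Saturation ensures that $\mathcal{C}_1$ is Zariski-dense in each irreducible component of the variety $\mathcal{Z}_{q-\tilde{q}}$, and irreducibility of $q-\tilde{q}$ then forces any polynomial vanishing on $\mathcal{C}_1$ to be divisible by $q-\tilde{q}$. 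The delicate step is to bootstrap these two inputs---the reflection identity and the divisibility principle---to show that every analytic monomial $\zeta_1^m\zeta_2^n$ (and its conjugate) lies, modulo constants, in the $L^2(\mu_f)$-closed span of the mixed-sign monomials already annihilated by $h-1$. This would force $h-1$ to be orthogonal to a dense subspace of $L^2(\mu_f)\ominus\mathbb{C}$, hence zero. Constructing this bootstrapping argument---essentially an algebraic-geometric extension of the one-variable density theory to the distinguished variety $\mathcal{Z}_{q-\tilde{q}}$, and the place where the full weight of saturation, irreducibility, and matching degrees enters---is the main technical obstacle.
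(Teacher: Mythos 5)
There is no ``paper proof'' of this statement to compare against: the paper explicitly attributes it to \cite[Theorem 1.5]{Kne19b} and cites it without reproving it, so the blind proposal should be judged on its own terms.

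Your proposal has a scoping problem that makes it incapable of proving the theorem as stated. Theorem \ref{thm:knese} concerns a general $\mathbb{T}^2$-saturated polynomial $q$ of arbitrary bidegree, but your argument invokes Theorem \ref{thm:isom} (via Lemma \ref{lem:nt} and Proposition \ref{thm:unitary}), which is stated and proved in this paper \emph{only} for RIFs of bidegree $(n,1)$. Indeed Example \ref{ex:deg33} exhibits a degree $(3,3)$ RIF for which Theorem \ref{thm:isom}(ii) fails outright, so the classification ``generic iff unitary'' that you rely on is not available in the generality required. Moreover, the paper itself uses Theorem \ref{thm:knese} as an \emph{input} to Corollary \ref{cor:extreme}(ii); routing the proof through the machinery of Section \ref{sec:clark2} would make that corollary circular.

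Beyond the scope issue, the last step of your argument reveals that the Clark-unitarity hypothesis is doing no work. You correctly identify that pluriharmonicity of $\Re f_1$ gives orthogonality of $h-1$ in $L^2(\mu_f)$ to the mixed-sign monomials $\bar{\zeta}_1^m\zeta_2^n$, $\zeta_1^m\bar{\zeta}_2^n$ ($m,n>0$) and to constants. You then propose to show that every analytic monomial $\zeta_1^m\zeta_2^n$ lies (modulo constants) in the closed span of the mixed-sign monomials. But if you can do that, then the mixed-sign monomials together with constants are already dense in $L^2(\mu_f)$ (since trigonometric polynomials are always dense for any finite Borel measure on $\mathbb{T}^2$), and $h\equiv 1$ follows immediately; the density of $A(\mathbb{D}^2)$ in $L^2(\mu_f)$, and hence the whole detour through $J_1$ being unitary, is redundant. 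Conversely, $A(\mathbb{D}^2)$-density alone does not give you orthogonality to the analytic monomials, so it cannot replace the bootstrapping.

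That bootstrapping step---passing from vanishing of Fourier coefficients in the open mixed-sign cone of $\mathbb{Z}^2$ to vanishing everywhere, using the algebraic structure of $\mathcal{Z}_{q-\tilde{q}}\cap\mathbb{T}^2$---is in fact essentially the entire content of Knese's theorem, and you acknowledge leaving it open. Knese's actual proof proceeds directly by an algebraic argument on the distinguished variety $\mathcal{Z}_{q-\tilde{q}}$, exploiting saturation (so the variety meets $\mathbb{T}^2$ in a ``maximal'' way), irreducibility (so vanishing on the torus piece forces divisibility by $q-\tilde{q}$), and the reflection identity $\overline{q}=\bar{\zeta}^{\deg q}\tilde{q}$ on $\mathbb{T}^2$, without ever invoking Clark embeddings. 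As it stands, your outline identifies the correct target identity but offers no mechanism to establish it, and the scaffolding you do supply (genericity of $\alpha=1$, $J_1$ unitary) is either restricted to $(n,1)$ or not load-bearing.
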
 

As mentioned in the introduction, our results in the $(n,1)$ setting coupled with Theorem \ref{thm:knese} yield Corollary \ref{cor:extreme}, which we restate here for convenience.

\begin{corollary*} \textbf{\ref{cor:extreme}.} Assume $\tilde{p}(0,0)=0$ and let $\alpha \in \mathbb{T}$. Then
\begin{itemize}
\item[i.] If $\alpha$ is an exceptional value for $\phi$, then $\sigma_\alpha$ is not an extreme point of $M(\mathcal{P}_2)$.
\item[ii.] If $\deg p = \deg \tilde{p}$, $p$ is $\mathbb{T}^2$-saturated, and $\alpha$ is generic for $\phi$, then $\sigma_\alpha$ is an extreme point of $M(\mathcal{P}_2)$. 
\end{itemize}
\end{corollary*}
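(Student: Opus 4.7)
The plan is to reformulate the extreme-point question as one about the function $f_\alpha \in \mathcal{P}_2$. Because $\tilde p(0,0)=0$ implies $\phi(0,0)=0$, the function
\[
f_\alpha(z) := \frac{\alpha+\phi(z)}{\alpha-\phi(z)} = \frac{\alpha p(z)+\tilde p(z)}{\alpha p(z)-\tilde p(z)}
\]
satisfies $f_\alpha(0,0)=1$ and has positive real part, so $f_\alpha \in \mathcal{P}_2$; by the definition of Clark measures, $M(f_\alpha)=\sigma_\alpha$, and the affine bijection noted in the introduction gives that $\sigma_\alpha$ is extreme in $M(\mathcal{P}_2)$ if and only if $f_\alpha$ is extreme in $\mathcal{P}_2$.

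For (i), I would extract a nontrivial convex decomposition of $\sigma_\alpha$ directly from Theorem \ref{thm:clark}. Writing $\sigma_\alpha = \mu_E + \sum_{k=1}^{\ell} c_k^\alpha \rho_k$, where $\mu_E$ is the push-forward of $\nu_\alpha$ under $\zeta\mapsto(\zeta,\overline{B_\alpha(\zeta)})$ and $\rho_k:=\delta_{\tau_k}\otimes m$ is supported on $L_k$, a short Fubini computation shows that the Poisson integral of $\rho_k$ equals $\Re\bigl((\tau_k+z_1)/(\tau_k-z_1)\bigr)$, which is pluriharmonic and takes value $1$ at the origin, so each $\rho_k\in M(\mathcal{P}_2)$. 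Subtracting, the Poisson integral of $\mu_E$ is pluriharmonic (a difference of pluriharmonic functions), and its total mass $a := 1-\sum_{k=1}^\ell c_k^\alpha = \int_\mathbb{T} W_\alpha\,dm$ is strictly positive because Proposition \ref{prop:QR} identifies the numerator of $W_\alpha$ with $|Q|^2$ and $Q\not\equiv 0$ for a $(n,1)$ RIF (the space $\mathcal{H}(K_1)$ in the proof of Theorem \ref{thm:model} is one-dimensional). Thus $\tilde\mu_E := \mu_E/a \in M(\mathcal{P}_2)$; since $\ell \ge 1$ and $\tilde\mu_E$ and each $\rho_k$ have mutually disjoint supports modulo $\sigma_\alpha$-null sets, the identity $\sigma_\alpha = a\,\tilde\mu_E + \sum_{k=1}^\ell c_k^\alpha\rho_k$ exhibits $\sigma_\alpha$ as a nontrivial convex combination of distinct elements of $M(\mathcal{P}_2)$, so $\sigma_\alpha$ is not extreme.

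For (ii), the strategy is to apply Theorem \ref{thm:knese} with $q := \sqrt{\alpha}\,p$ and $\tilde q := \tilde p/\sqrt\alpha$ for any fixed choice of square root. Then $\tilde q$ is indeed the reflection of $q$, $\deg q = \deg \tilde q$ by hypothesis, $q$ has no zeros on $\mathbb{D}^2$ and is $\mathbb{T}^2$-saturated (inherited from $p$), $q$ and $\tilde q$ share no common factors, $\tilde q(0,0)=0$, and $(q+\tilde q)/(q-\tilde q) = f_\alpha$. The substantive step is verifying that $q-\tilde q$, equivalently $\tilde p - \alpha p$, is irreducible in $\mathbb{C}[z_1,z_2]$. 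I would write $\tilde p - \alpha p = (\tilde p_2 - \alpha p_1) + z_2(\tilde p_1 - \alpha p_2)$, with $z_2$-degree exactly $1$ since the proof of Theorem \ref{thm:Ca} gives $\deg(\tilde p_1 - \alpha p_2) = n$. Any nontrivial factorization must then take the form $q_1(z_1)\cdot Q(z_1,z_2)$ with $q_1$ dividing $\gcd(\tilde p_2 - \alpha p_1,\,\tilde p_1 - \alpha p_2)$, and any common root $\gamma$ of these two polynomials would force $\phi(\gamma,z_2)\equiv\alpha$ on $\mathbb{T}\setminus\{\lambda_k\}$, whence $\gamma=\tau_k$ and $\phi^*(\tau_k,\lambda_k)=\alpha$ by Lemma \ref{lem:nt}, contradicting the genericity of $\alpha$. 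With irreducibility secured, Theorem \ref{thm:knese} gives that $f_\alpha$, and hence $\sigma_\alpha$, is extreme. The main obstacle is precisely this irreducibility check; everything else is either a routine hypothesis check for Theorem \ref{thm:knese} or a direct repackaging of Theorem \ref{thm:clark}.
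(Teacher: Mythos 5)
Your proof follows essentially the same route as the paper for both parts: a convex decomposition extracted from Theorem~\ref{thm:clark} for (i), and Theorem~\ref{thm:knese} applied to $q=\lambda p$ with $\lambda^2=\alpha$ for (ii). One small gap in your irreducibility check for (ii): you invoke Lemma~\ref{lem:nt} to conclude $\gamma=\tau_k$, but that lemma applies only to $\gamma\in\mathbb{T}$, and you have not shown that a common root $\gamma$ of $\tilde p_2 - \alpha p_1$ and $\tilde p_1 - \alpha p_2$ necessarily lies on $\mathbb{T}$ rather than off the circle. This is repaired by the observation in the proof of Theorem~\ref{thm:Ca} that $\alpha p_1-\tilde p_2$ is nonvanishing on $\mathbb{D}$ while $\tilde p_1-\alpha p_2=\alpha\widetilde{(\alpha p_1-\tilde p_2)}$ has all zeros in $\overline{\mathbb{D}}$, forcing any common zero onto $\mathbb{T}$; alternatively, and more in the spirit of what the paper actually cites, genericity gives $\deg B_\alpha=n$ by Theorem~\ref{thm:Ca}(ii), which already says the numerator and denominator of $B_\alpha$ share no common factor, so $\tilde p-\alpha p = (\tilde p_1-\alpha p_2)z_2+(\tilde p_2-\alpha p_1)$ is irreducible directly.
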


\begin{proof} For (i), without loss of generality, assume $\phi^*(\tau_k, \lambda_k) = \alpha$ for $k=1, \dots, \ell$. By Proposition \ref{prop:eclark}, we can write 
\[ \sigma_\alpha(\zeta) =  \mu_\alpha(\zeta)+ c^\alpha_1 \left( \delta_{\tau_1} (\zeta_1) \otimes m(\zeta_2) \right),\]
 for a positive Borel measure $\mu_\alpha$ on $\mathbb{T}^2$ and $c^\alpha_1 >0$. As $\phi(0,0)=0$,  we have
 \[ 1 = \sigma_{\alpha}(\mathbb{T}^2) = \mu_\alpha(\mathbb{T}^2) + c_1^{\alpha},\]
 and as $\mu_\alpha(\mathbb{T}^2) >0$, we have $c_1^\alpha <1$. 
Then $\hat{\mu}_\alpha: = \frac{1}{1-c^{\alpha}_1} \mu_\alpha$ is a probability measure  and 
\begin{equation} \label{eqn:convex} \sigma_\alpha(\zeta) = (1-c^{\alpha}_1) \hat{\mu}_\alpha(\zeta) +  c^\alpha_1 \left( \delta_{\tau_1} (\zeta_1) \otimes m(\zeta_2) \right),\end{equation}
so $\sigma_\alpha$ is a convex combination of two probability measures on $\mathbb{T}^2$. Clearly, the second one is in $M(\mathcal{P}_2)$, as 
\[ \Re\left( \frac{\tau_1 +z_1}{\tau_1-z_1}\right) = \frac{1-|z_1|^2}{|z_1-\tau_1|^2} = \int_{\mathbb{T}^2} P_z(\zeta) \ d\left( \delta_{\tau_1} (\zeta_1) \otimes m(\zeta_2) \right).\]
 For the first, observe that for each $z \in \mathbb{D}^2$, 
\[ \tfrac{1}{1-c^{\alpha}_1} \Re \left( \frac{\alpha +\phi(z)}{\alpha - \phi(z)} -c^{\alpha}_1 \frac{\tau_1 + z_1}{\tau_1 -
 z_1} \right) = \int_{\mathbb{T}^2} P_z(\zeta) d\hat{\mu}_\alpha(\zeta) > 0.\] 
This implies that $\hat{\mu}_\alpha \in M(\mathcal{P}_2)$ and by \eqref{eqn:convex}, $\sigma_\alpha$ is not  an extreme point in $M(\mathcal{P}_2)$. 

For (ii), choose $\lambda \in \mathbb{T}$ with $\lambda^2 = \alpha$, define $ q = \lambda p$, and set 
\[ f: = \frac{ \alpha +\phi}{\alpha-\phi} = \frac{\alpha p + \tilde{p}}{\alpha p -\tilde{p}} = \frac{ q + \tilde{q}}{q-\tilde{q}}.\]
Note that $\tilde{q} - q = \bar{\lambda}( \tilde{p} -\alpha p)$ must be irreducible by the characterization of $C_\alpha$ from Theorem \ref{thm:Ca}. By Theorem \ref{thm:knese}, $f$ is an extreme point of $\mathcal{P}_2$ and so $\sigma_\alpha$ from Theorem \ref{thm:clark} is extreme in $M(\mathcal{P}_2).$
\end{proof}

 \section{Examples}  \label{sec:examples}
We illustrate our results by examining some specific RIFs and their associated Clark measures in detail. For the first example, we can confirm our general findings at exceptional values $\alpha$ via direct computation.
    \begin{example}\label{ex:fave}
     Let \[\phi(z) = \frac{\tilde{p}(z)}{p(z)} = \frac{2z_1z_2-z_1-z_2}{2-z_1-z_2},\] essentially the example considered in \cite{D19}.  We have the sums of squares decomposition
    \[ |p(z)|^2 - |\tilde{p}(z)|^2 = (1-|z_2|^2) 2|1-z_1|^2 + (1-|z_1|^2) 2 |1-z_2|^2\]
    and for each $\alpha \in \mathbb{T}$, the associated $B_{\alpha}$ is 
    \[ B_{\alpha}(z_1) = \frac{2z_1-1+\alpha} {2\alpha -\alpha z_1 + z_1}.\]
    Note that if $\alpha = -1$, then $B_{-1} \equiv 1$. If $\alpha \ne -1$, then $2\alpha -\alpha z_1 + z_1$ does not vanish on $\mathbb{T}$. Thus if $\alpha \ne -1$, then by Proposition \ref{thm:RIF1n},
 for all $f \in L^2(\sigma_{\alpha})$, we have
\[\int_{\mathbb{T}^2} f(\zeta) d\sigma_{\alpha}(\zeta) = \int_{\mathbb{T}} f(\zeta,\overline{B_\alpha(\zeta)}) \frac{2 |1-\zeta|^2}{|2\zeta-1+\alpha|^2} \ dm(\zeta),\]
and by Theorem \ref{thm:unitary}, the isometric embedding $J_{\alpha}: K_{\phi} \rightarrow L^2(\sigma_{\alpha})$ is unitary. Finally, if $\alpha =-1$, then $|2z_1-1+\alpha|^2 = 4 |z_1 - 1 |^2$. 
By the given sums of squares decomposition, 
\[
\begin{aligned}
\frac{1-|\phi(z)|^2}{| \alpha - \phi(z) |^2}  &= \frac{ |p(z)|^2-|\tilde{p}(z)|^2}{|\alpha p(z) -\tilde{p}(z) |^2} \\
    & = \frac{  |p(z)|^2-|\tilde{p}(z)|^2}{|z_2-1|^2 |-p_2(z_1) -\tilde{p}_1(z_1)|^2 }\\ 
    & = \frac{(1-|z_2|^2) 2|1-z_1|^2 + (1-|z_1|^2) 2 |1-z_2|^2}{|z_2-1|^2 \cdot 4 |z_1-1|^2} \\
& = \frac{1}{2} \left(  \frac{1-|z_2|^2}{|z_2-1|^2}  +  \frac{1-|z_1|^2}{ |z_1-1|^2} \right),
\end{aligned}
\]
which shows $\sigma_{\alpha} = \frac{1}{2}( \delta_1(\zeta_1) \otimes  m(\zeta_2) + m(
\zeta_1) \otimes \delta_1(\zeta_2)).$ This was observed in \cite{D19}, and confirms the contents of Theorem \ref{thm:clark}. Note in particular that
$\frac{\partial \phi}{\partial z_1}(z_1,z_2)=-2\frac{(z_2-1)^2}{(2-z_1-z_2)^2}$, so that $\frac{\partial \phi}{\partial z_1}(1,z_2)=-2$ independent of $z_2$.

See Figure \ref{levelcurves}(a) for a visual representation of the sets $\mathcal{C}_{\alpha}$. $\hfill \blacklozenge$
    \end{example} 
    
Now let us consider a RIF that was not studied in \cite{D19}, and again illustrate how the exceptional measure $\sigma_{\alpha}$ can be identified using both our results and concrete Agler decompositions.

\begin{example}\label{ex:AMY}
Let $\phi = \frac{\tilde{p}}{p}$, where 
\[ p(z) = 4-z_2-3z_1 -z_1z_2+z_1^2 \text{ and } \tilde{p}(z) = 4z_1^2z_2 -z_1^2 -3z_1z_2 -z_1 + z_2.\]
This example was introduced by Agler-M\McC Carthy-Young in \cite{AMY12}. In \cite[Section 15]{Kne15}, Knese provides the following sums of squares decomposition:
\[ |p(z)|^2-|\tilde{p}(z)|^2 = 4(1-|z_2|^2) |1-z_1|^4 +4(1-|z_1|^2) \left( |1-z_1|^2|1-z_2|^2 +2|1-z_1z_2|^2\right).\]
The only singularity of $\phi$ occurs at $(1,1)$. For each $\alpha \in \mathbb{T},$ setting $\phi(z) = \alpha$ and solving for $z_2$ yields $z_2 = 1/B_{\alpha}(z_1)$, where
 \[ B_{\alpha}(z_1) = \frac{4z_1^2 -3z_1+1+\alpha +\alpha z_1}{4\alpha-3z_1\alpha+z_1^2\alpha+z_1^2+z_1}.\]
 As $\phi$ has only one singularity, by previous discussions, the denominator of $B_{\alpha}$ can vanish at a point on $\mathbb{T}$ for at most one $\alpha$. This occurs at $\alpha = -1$, where $B_{-1}$ reduces to $B_{-1}(z) = z$ and $\phi = -1$ has the additional solution $z_1=1$. Thus, we can apply Proposition \ref{thm:RIF1n} to $\alpha \ne -1$ to obtain: 
for all $f \in L^1(\sigma_\alpha)$, 
\[ \int_{\mathbb{T}^2} f(\zeta) d\sigma_{\alpha}(\zeta) = \int_{\mathbb{T}} f(\zeta,\overline{B_\alpha(\zeta)}) \ \frac{ 4|\zeta-1|^4}{|4\zeta^2-3\zeta + 1 + \alpha + \alpha \zeta |^2} \ dm(\zeta).\]
By Theorem \ref{thm:unitary}, the isometric embedding $J_{\alpha}: K_{\phi} \rightarrow L^2(\sigma_{\alpha})$ is unitary for every $\alpha \ne -1$.

\begin{figure}[h!]
    \subfigure[Level curves for $\phi=(2z_1z_2-z_1-z_2)/(2-z_1-z_2)$ corresponding to $\alpha=1$ (black), $\alpha=e^{i\pi/4}$ (gray), $\alpha=e^{i\pi/2}$ (orange), and $\alpha=e^{3i\pi/4}$ (pink). Level set corresponding to exceptional value $\alpha=-1$ marked in red.]
      {\includegraphics[width=0.4 \textwidth]{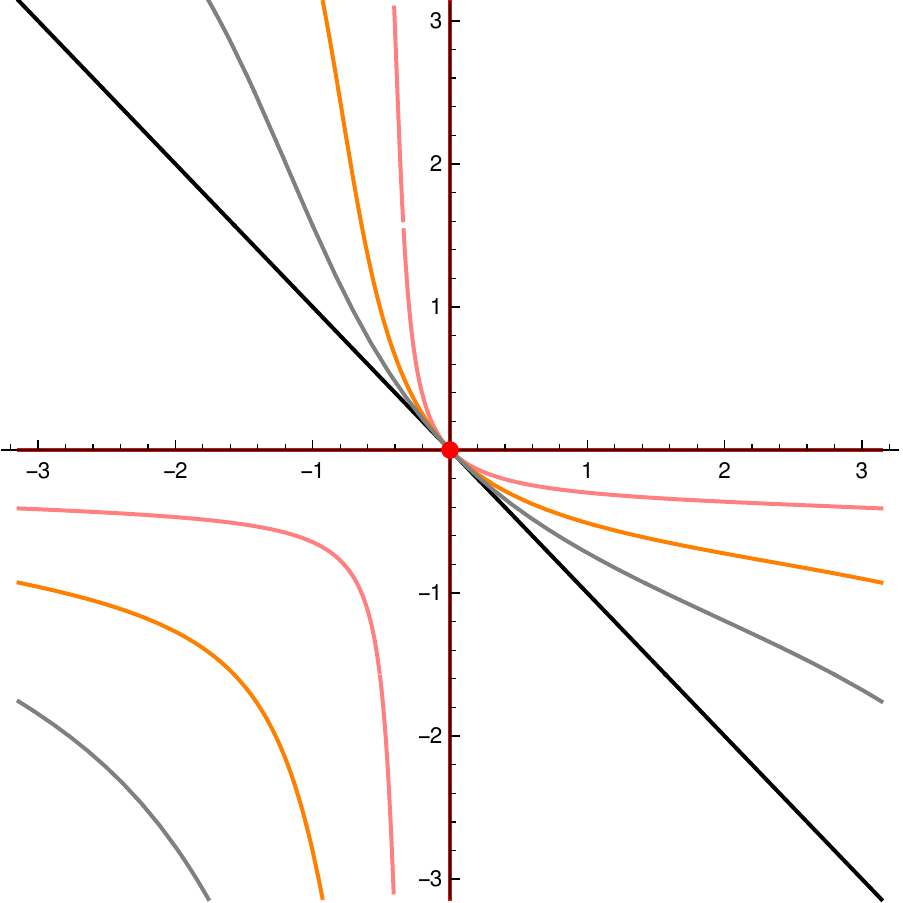}}
    \hfill
    \subfigure[Level curves for $\phi=(4z_1^2z_2 -z_1^2 -3z_1z_2 -z_1 + z_2)/(4-z_2-3z_1 -z_1z_2+z_1^2)$ corresponding to $\alpha=1$ (black), $\alpha=e^{i\pi/4}$ (gray), $\alpha=e^{i\pi/2}$ (orange), and $\alpha=e^{3i\pi/4}$ (pink). Level set corresponding to exceptional value $\alpha=-1$ marked in red.]
      {\includegraphics[width=0.4 \textwidth]{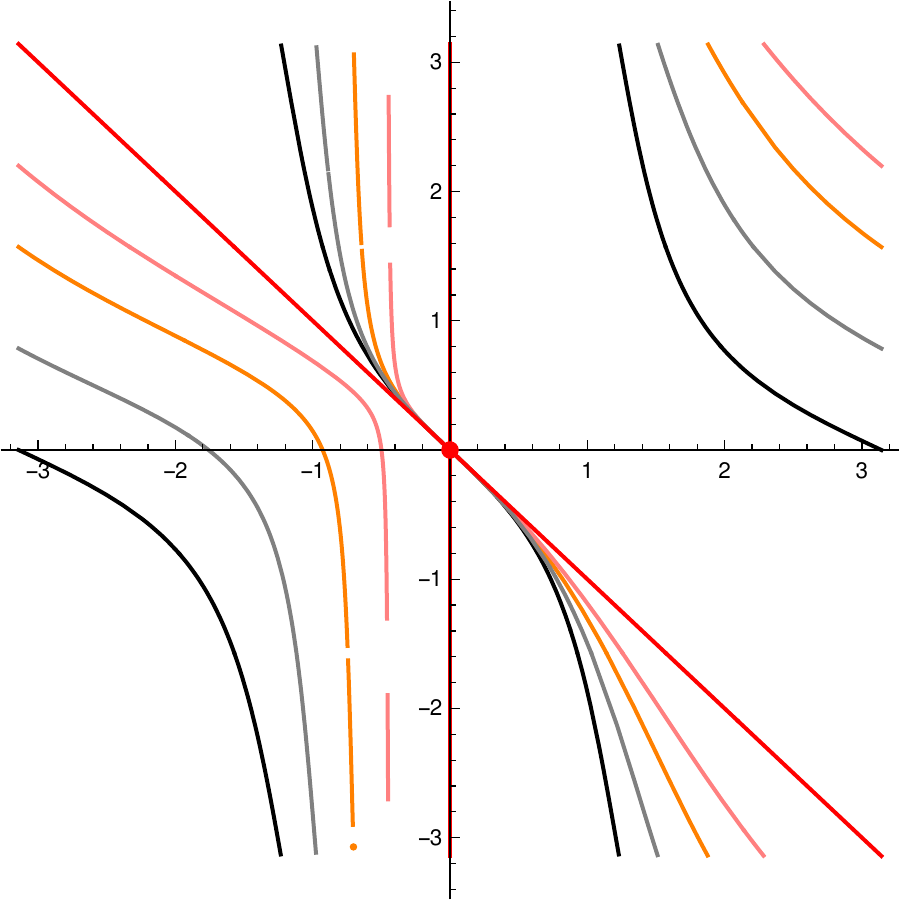}}
  \caption{\textsl{Supports of $\sigma_{\alpha}$ for two different RIFs.}}
  \label{levelcurves}
\end{figure}
Let us now examine the exceptional value $\alpha=-1=\phi^*(1,1)$. A computation shows
\[p(z)+\tilde{p}(z)=4(1-z_1)(1-z_1z_2),\]
and hence, for $\alpha=-1$, we have
\[\frac{1-|\phi(z)|^2}{|\alpha-\phi(z)|^2}=\frac{|p(z)|^2-|\tilde{p}(z)|^2}{16|1-z_1|^2|1-z_1z_2|^2}.\]
By the sums of squares formula above, we then obtain
\[
\begin{aligned}
\frac{1-|\phi(z)|^2}{| \alpha - \phi(z) |^2}  =&\frac{1}{4}|1-z_1|^2\frac{1-|z_2|^2}{|1-z_1z_2|^2}+\frac{1}{4}|1-z_2|^2\frac{1-|z_1|^2}{|1-z_1z_2|^2}+
\frac{1}{2}\frac{1-|z_1|^2}{|1-z_1|^2}\\
=&\frac{1}{4}\frac{(1-|z_1|^2)|1-z_2|^2+(1-|z_2|^2)|1-z_1|^2}{|1-z_1z_2|^2}
\\&+
\frac{1}{2}\frac{1-|z_1|^2}{|1-z_1|^2}
\end{aligned}.
\]

The second term is evidently the Poisson integral of the measure $\sigma_{-1}^{(2)}=\frac{1}{2}(\delta_1(\zeta_1)\otimes m_1(\zeta_2))$, which matches what we get from computing $\frac{\partial \phi}{\partial z_1}(1,z_2)=-2$ and taking the reciprocal of its absolute value.

The first term arises from the measure $\sigma_{-1}^{(1)}$ on $\mathbb{T}^2$ 
having 
\[\int_{\mathbb{T}^2}f(\zeta)d\sigma_{-1}^{(1)}(\zeta)= \tfrac{1}{4}\int_{\mathbb{T}}f(\zeta, \overline{\zeta})|1-\zeta|^2dm(\zeta),\]
as can be seen by examining the Fourier coefficients
\[\widehat{\sigma_{-1}^{(1)}}(k,l)=\left\{\begin{array}{cc}\frac{1}{2}, &k=l\\ 
-\frac{1}{4}, &k=l+1\\-\frac{1}{4}, & l=k+1\\
0 & \mathrm{otherwise}\end{array}\right.\]
and computing the Poisson integral of $\sigma_{-1}^{(1)}$ explicitly. The specific form of $\sigma_{-1}^{(1)}$ of course agrees with Theorem \ref{thm:clark} once we set  $\alpha=-1$ to get $W_{-1}(\zeta)= \frac{4|\zeta-1|^2}{|4\zeta^2-4\zeta|^2}=\frac{1}{4}|\zeta-1|^2$. 

Level curves $\mathcal{C}_{\alpha}$ for several values of $\alpha$ are displayed in Figure \ref{levelcurves}(b).$\hfill \blacklozenge$
 \end{example} 
 \begin{remark}
 The RIF $\phi=\frac{\tilde{p}}{p}$ with
 \[p(z)=2-z_1z_2-z_1^2z_2 \quad \textrm{and}\quad \tilde{p}(z)=2z_1^2z_2-z_1-1\]
 has a singularity at $(1,1)$, and $\phi^*(1,1)=-1$ so that $\alpha=-1$ is an exceptional value. 
 
 One verifies that the associated $B_{-1}(z_1)=z_1$ so that $\sigma_{-1}$ for this example is supported on the same set as the exceptional Clark measure in Example \ref{ex:AMY}. However, we have
 \[W_{\alpha}(\zeta)=\frac{|\zeta-1|^2}{|(2+\alpha)\zeta+\alpha|^2},\]
 which collapses to $W_{-1}(\zeta)=1$ at the exceptional value, meaning that the two Clark measures do not coincide.
  \end{remark}

Our next example is a degree $(3,1)$ RIF with two different singularities on $\mathbb{T}^2$. Here, we are able to observe qualitative differences in $W_{\alpha}$ for the two corresponding exceptional values of $\alpha$ that reflect the finer distinctions between the two singularities.
\begin{example} \label{ex:3}
Let 
\[p(z)=4-z_2+z_1z_2-3z_ 1^2z_2-z_1^3z_2 \quad \textrm{and}\quad \tilde{p}(z)=4z_ 1^3z_ 2-z_1^3+z_1^2-3z_1-1\]
and set $\phi=\frac{\tilde{p}}{p}$. This function has singularities at $(1,1)$ and $(-1,1)$, and the associated exceptional $\alpha$-values are
$\phi^*(1,1)=-1$ and $\phi^*(-1,1)=1$. Level sets for this example are displayed in Figure \ref{multiplelevellines}; see also  \cite[Example 7.4]{BPSprep}.
\begin{figure}
\includegraphics[width=0.4 \textwidth]{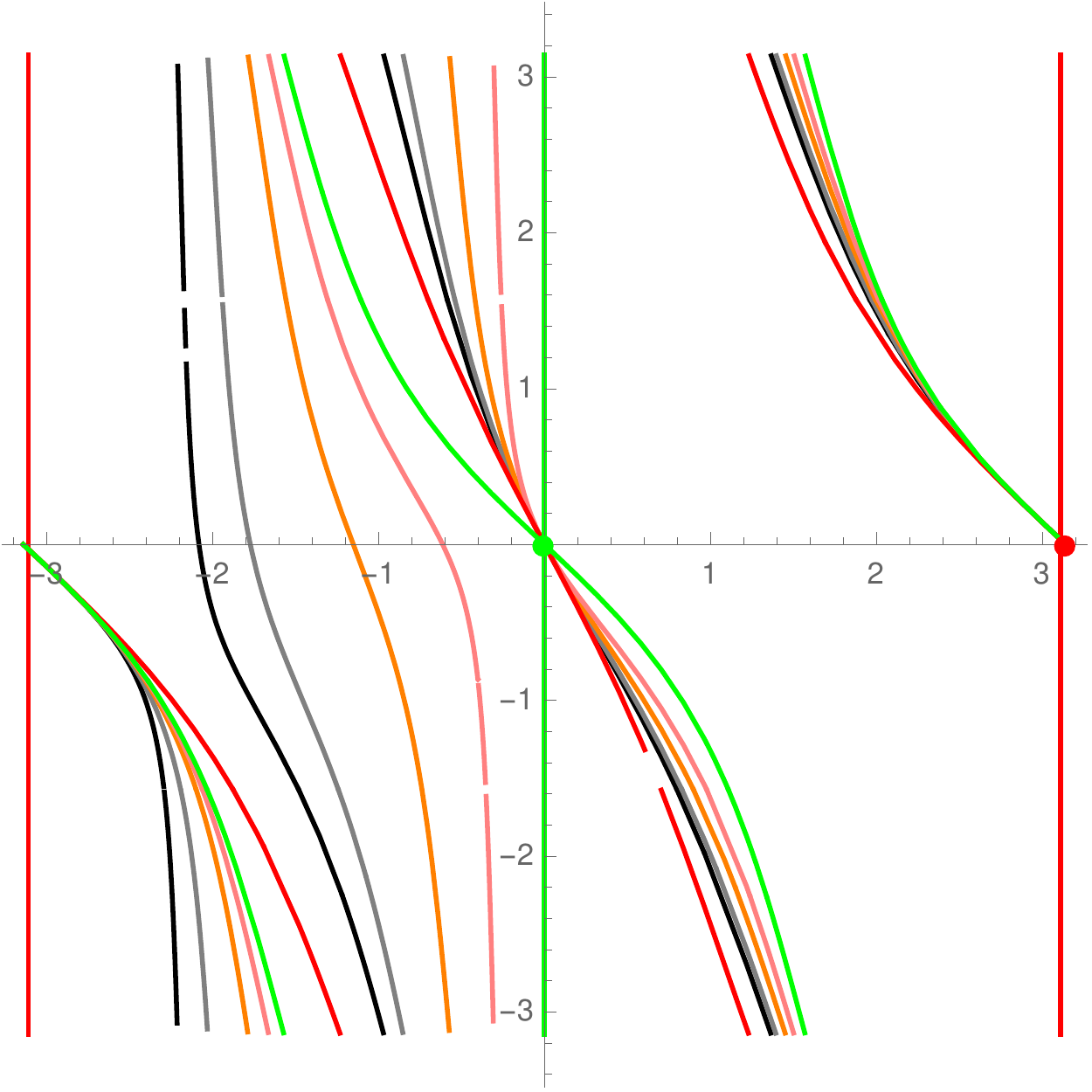}
\caption{\textsl{Generic level curves for $(4z_ 1^3z_ 2-z_1^3+z_1^2-3z_1-1)/(4-z_2+z_1z_2-3z_ 1^2z_2-z_1^3z_2)$ corresponding to several values of $\alpha$ (black, gray, orange, pink). Level sets corresponding to exceptional values $\alpha=-1$ and $\alpha=1$ marked in green and red, respectively.}}
\label{multiplelevellines}
\end{figure}
\end{example}

For $\alpha\neq 1,-1$, we have
\[ B_{\alpha}(z_1)=\frac{\alpha- \alpha z_1 + 3 \alpha z_1^2 + 4 z_1^3 + \alpha z_1^3}{1 + 4 \alpha + 3 z_1 - z_1^2 + z_1^3}.\]
Note that for $\alpha=-1$, we get
\[B_{-1}(z_1)=\frac{z_1-1}{z_1-1}\frac{3z_1^2+1}{3+z_1^2}=\frac{3z_1^2+1}{3+z_1^2},\]
a Blaschke product of degree $2$, while for $\alpha=1$,
\[B_{1}(z_1)=\frac{z_1+1}{z_1+1}\frac{5z_1^2-2z_1+1}{z_1^2-2z_1+5}=\frac{5z_1^2-2z_1+1}{z_1^2-2z_1+5},\]
another degree $2$ Blaschke product. The graphs $\{(\zeta, \overline{B_{-1}(\zeta)})$ and $\{(\zeta, \overline{B_{1}(\zeta)})\}$ together with vertical lines at $\zeta_1=1$ and $\zeta_1=-1$ constitute $\mathrm{supp}(\sigma_{-1})$ and $\mathrm{supp}(\sigma_1)$, respectively.

We further read off that
\[p_1(z_1)=4 \quad \mathrm{and}\quad p_2(z_1)=-1+z_1-3z_1^2-z_1^3\]
so that, with $W_{\alpha}$ as in Remark \ref{rem:notation},
\[W_{\alpha}(\zeta)=\frac{16-|1-\zeta+3\zeta^2+\zeta^3|^2}{|4\zeta^3+\alpha \zeta^3+3\alpha \zeta^2-\alpha \zeta +\alpha|^2}.\]
After some simplifications, we find that
\[W_{\alpha}(z)=\frac{|\zeta-1|^2|\zeta+1|^4}{|4\zeta^3+\alpha \zeta^3+3\alpha \zeta^2-\alpha \zeta +\alpha|^2}.\]
For the exceptional values $\alpha=\pm 1$, the weights in the point mass parts of $\sigma_{\pm1}$ can be obtained by computing
\[\frac{\partial \phi}{\partial z_1}(1,z_2)=-1 \quad \textrm{and} \quad  \frac{\partial \phi}{\partial z_1}(-1,z_2)=-2,\]
which imply
\[c^{-1}_1= \frac{1}{ |\frac{\partial \phi}{\partial z_1}(1,z_2)|}=1 \quad \textrm{and}\quad c^{1}_1=\frac{1}{ |\frac{\partial \phi}{\partial z_1}(-1,z_2) |}=\frac{1}{2}.\]
(Note that $\phi(0,0)=-\frac{1}{4}$ so that the Clark measures $\sigma_{\pm1}$ are not probability measures in this example.)
Putting $\alpha=\pm 1$ in $W_{\alpha}$, we have cancellation in numerator and denominator, and we obtain
\[W_{-1}(\zeta)=\frac{|\zeta+1|^4}{|3\zeta^2+1|^2}\]
and 
\[W_{1}(\zeta)=\frac{|\zeta-1|^2|\zeta+1|^2}{|5\zeta^2-2\zeta^2+1|^2}.\]
This gives us a complete description of the exceptional Clark measures. 

Furthermore, observe that, $W_{-1}(1)\neq 0$ and so, $W_{-1}$ does not vanish at the $z_1$-coordinate of the singularity with non-tangential value $-1.$ In contrast, $W_{1}(-1)= 0$, so $W_{1}$ does vanish at the $z_1$-coordinate of the singularity with non-tangential value $1.$ This mirrors the singular behavior in Example \ref{ex:AMY}, where function $W_{-1}$ vanishes at $\zeta=1$, the $z_1$-coordinate of the singularity where $\phi^*(1,1)=-1$.  This pattern suggests a connection with contact order, which was studied in \cite{BPS17} and governs the integrability of partial derivatives of a RIF $\phi$; in that sense, higher contact order indicates a stronger singularity. In our computations, the singularities at $(1,1)$ in Example \ref{ex:AMY} and at $(-1,1)$ in this example (where the exceptional $W_\alpha$ vanish at the $z_1$-coordinate of the associated singularity) are instances of singularities where $\phi$ exhibits contact order $4$; the singularities in Example \ref{ex:fave} and at $(1,1)$ in this example  (where the exceptional $W_\alpha$ do not vanish at the $z_1$-coordinate of the associated singularity) are singularities where $\phi$ exhibits contact order $2$, the lowest possible contact order.  $\hfill \blacklozenge$

\begin{remark}
It would interesting to investigate how the exact nature of a singularity $\tau \in \mathbb{T}^2$ (contact order, number of branches of $p$ coming together at $\tau$, etc) of a RIF is reflected in the associated exceptional Clark measure. For example, if $\phi=\frac{\tilde{p}}{p}$ is a general degree $(m,n)$ RIF having contact order at least $4$, does the corresponding exceptional Clark measure have a density along $\mathcal{C}_{\alpha}$ that vanishes at $\tau$?
\end{remark}

Our final example is a rational inner function having bidegree $(3,3)$, and is not covered by our general results. It serves as a counterexample showing that Theorem \ref{thm:unitary} fails for higher-degree RIFs, and illustrates some complexities that arise from the fact that for RIFs of bidegree $(m,n)$ with $m,n\geq 2$, a general $\alpha$-level set is not necessarily parametrized by a single function. 
\begin{example}\label{ex:deg33}
Let $\phi(z)=\frac{\tilde{p}}{p}(z)$ where
\[p(z)=2-z_1^2z_2-z_1z_2^2 \quad \textrm{and} \quad \tilde{p}(z)=z_1z_2(2z_1^2z_2^2-z_1-z_2).\]
This example is obtained by applying the level line embedding construction described in \cite[Section 6.1]{BPSprep} to the essentially $\mathbb{T}^2$-symmetric polynomial
\[r(z)=(1-z_1^2z_2)(1-z_1z_2^2).\]

As is guaranteed by the embedding construction, we have $p(1,1)=0=\tilde{p}(1,1)$ and $\phi^*(1,1)=-1$, as well as
\[\tilde{p}+p=2(1-z_1^2z_2)(1-z_1z_2^2).\]
These facts can also be checked directly. We also note that $p$ and $\tilde{p}$, and hence $\phi$, are invariant under the simultaneous coordinatewise rotations $z_j\mapsto e^{2i\pi/3}z_j$ and $z_j\mapsto e^{-2i\pi/3}z_j$. Some level sets of $\phi$ are displayed in Figure \ref{higherdeglevellines}.

Recall from Example \ref{ex:fave} that $|2-x-y|^2-|2xy-x-y|^2=(1-|x|^2)2|1-y|^2+(1-|y|^2)2|1-x|^2$. Substituting $x=z_1^2z_2$ and $y=z_1z_2^2$ into this formula, we get the decomposition
\[|2-z_1^2z_2-z_1z_2^2|^2-|2z_1^3z_2^3-z_1^2z_2-z_1z_2^2|^2=(1-|z_1^2z_2|^2)2|1-z_1z_2^2|^2+(1-|z_1z_2^2|^2)2|1-z_1^2z_2|.\]
It follows that
\[\frac{1-|\phi(z)|^2}{|1+\phi(z)|^2}=\frac{1}{2}\frac{1-|z_1^2z_2|^2}{|1-z_1^2z_2|^2}+\frac{1}{2}\frac{1-|z_1z_2^2|^2}{|1-z_1z_2^2|^2},\]
and by computing Fourier coefficients, one can show that the two expressions on the right are the Poisson integrals of the measures having
\[\int_{\mathbb{T}^2}f(\zeta)d\sigma_{-1}^{(1)}=\int_{\mathbb{T}}f(\zeta, \overline{\zeta}^2)dm(\zeta)\quad \textrm{and}\quad \int_{\mathbb{T}^2}f(\zeta)d\sigma_{-1}^{(2)}=\int_{\mathbb{T}}f(\overline{\zeta}^2, \zeta)dm(\zeta)\]
respectively. By Doubtsov's Theorem $3.2$ in \cite{D19}, which applies to general RIFs, $J_{-1}$ is unitary if and only if the bidisk algebra is dense in $L^2(\sigma_{-1})$.

\begin{figure}
\includegraphics[width=0.4 \textwidth]{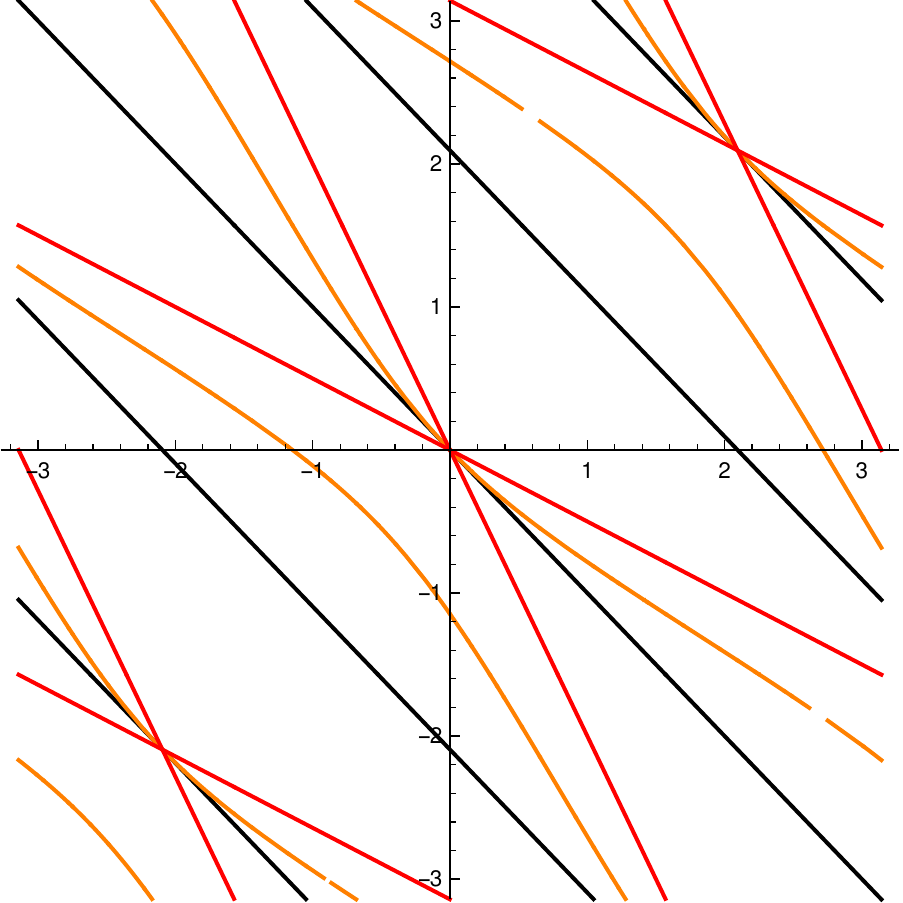}
\caption{\textsl{Level curves for $(2z_1^3z_2^3-z_1^2z_2-z_1z_2^2)/(2-z_1^2z_2-z_1z_2^2)$ corresponding to $\alpha=1$ (black) and $\alpha=e^{\pi i/2}$ (orange). Level set corresponding to exceptional value $\alpha=-1$ marked in red.}}
\label{higherdeglevellines}
\end{figure}
Let us show that this is indeed the case. By definition, $h_1(z)=z_1$ and $h_2(z)=z_2$ are elements of $A(\mathbb{D}^2)$. Next consider the function $g_1(z)=\bar{z}_1$ and the function
\[f_1(z)=z_1z_2+(1-z_1^2z_2)z_2^2 \in A(\mathbb{D}^2).\]
Since
\[f_1(\zeta, \bar{\zeta}^2)=\zeta\bar{\zeta}^2+(1-\bar{\zeta}^2\zeta^2)\bar{\zeta}^4=\bar{\zeta}=g_1(\zeta, \bar{\zeta}^2)\]
and
\[f_1(\bar{\zeta}^2, \zeta)=\bar{\zeta}^2\zeta+(1-\zeta \bar{\zeta}^4)\zeta^2= \zeta^2=g_1(\bar{\zeta}^2,\zeta)\]
we have $g_1=f_1$ on the support of $\sigma_{-1}$. A similar computation shows that the bidisk algebra function
\[f_2(z)=z_1z_2+(1-z_1z_2^2)z_1^2\]
coincides with $g_2(z)=\bar{z}_2$ on $\mathrm{supp}(\sigma_{-1})$. Thus, if $g(\zeta)=\zeta_1^m\zeta_2^n$ is any trigonometric polynomial, then, on the support of $\sigma_{-1}$, $g$ coincides with one of functions $h_1^{|m|}h_2^{|n|}$,  $h_1^{|m|}f_2^{|n|}$, $f_1^{|m|}h_2^{|n|}$, and $f_1^{|m|}f_ 2^{|n|}$, which are all in $A(\mathbb{D}^2)$. Since the trigonometric polynomials are dense in $C(\mathbb{T}^2)$, which in turn is dense in $L^2(\sigma_{-1}$), $A(\mathbb{D}^2)$ is also dense. Thus, $J_{-1}$ is unitary even though $\alpha=-1$ is the non-tangential value of $\phi$ at a singularity. $\hfill \blacklozenge$


\end{example}

\end{document}